\documentclass{amsart}
\usepackage{amssymb,amsmath,color}
\usepackage{dsfont}
\usepackage{mathrsfs}
\usepackage{enumitem}
\newcommand{\RR}{\mathbb{R}}
\newcommand{\QQ}{\mathbb{Q}}
\newcommand{\CC}{\mathbb{C}}
\newcommand{\XX}{\mathcal{X}}
\newcommand{\PSD}{\mathcal{S}_+}
\newcommand{\rank}{\textup{rank}\,}
\newcommand{\rankplus}{\textup{rank}_+\,}
\newcommand{\rankpsd}{\textup{rank}_{\textup{psd}}\,}
\newcommand{\sqrtrank}{\textup{rank}_{\! \! {\sqrt{\ }}}\,}
\newcommand{\diag}{\textup{diag}}
\newcommand{\supp}{\textup{supp}}
\newcommand{\conv}{\textup{conv}}
\newcommand{\cone}{\textup{cone}}
\newcommand{\xx}{\mathbf x}
\newcommand{\onevec}{\mathbbm{1}}
\renewcommand{\S}{\mathcal{S}}
\usepackage{color}
\usepackage{tikz}
\usepackage{pgf}
\usepackage{genyoungtabtikz}
\usepackage{bm}

\makeatletter
\newcommand{\addresseshere}{%
  \enddoc@text\let\enddoc@text\relax
}
\makeatother

% added by James
\newcommand{\cI}{\mathcal{I}}
\newcommand{\cV}{\mathcal{V}}
\newcommand{\cB}{\mathcal{B}}
\newcommand{\fS}{\mathfrak{S}}
\newcommand{\fs}{\mathfrak{s}}
\newcommand{\fR}{\mathfrak{R}}
\newcommand{\sym}{\textup{sym}}

\newtheorem{theorem}{Theorem}[section]

\newtheorem{lemma}[theorem]{Lemma}
\newtheorem{corollary}[theorem]{Corollary}
\newtheorem{proposition}[theorem]{Proposition}
\theoremstyle{definition}
\newtheorem{definition}[theorem]{Definition}
\newtheorem{example}[theorem]{Example}

\theoremstyle{remark}
\newtheorem{remark}[theorem]{Remark}

\newcommand{\Hzero}[0]{
\resizebox{0.58cm}{0.35cm}{
\begin{tikzpicture}
\node[fill=black, circle, minimum size=0.3cm] (1) {};
\node[fill=black, circle, minimum size=0.3cm] (2) [below left of=1] {};
\node[fill=black, circle, minimum size=0.3cm] (3) [below right of=1] {};
\end{tikzpicture}}}

\newcommand{\Hone}[0]{
\resizebox{0.58cm}{0.35cm}{
\begin{tikzpicture}
\node[fill=black, circle, minimum size=0.3cm] (1) {};
\node[fill=black, circle, minimum size=0.3cm] (2) [below left of=1] {};
\node[fill=black, circle, minimum size=0.3cm] (3) [below right of=1] {};
\draw (2)--(1);
\end{tikzpicture}}}

\newcommand{\Htwo}[0]{
\resizebox{0.58cm}{0.35cm}{
\begin{tikzpicture}
\node[fill=black, circle, minimum size=0.3cm] (1) {};
\node[fill=black, circle, minimum size=0.3cm] (2) [below left of=1] {};
\node[fill=black, circle, minimum size=0.3cm] (3) [below right of=1] {};
\draw (2)--(1)--(3);
\end{tikzpicture}}}

\newcommand{\Hthree}[0]{
\resizebox{0.58cm}{0.35cm}{
\begin{tikzpicture}
\node[fill=black, circle, minimum size=0.3cm] (1) {};
\node[fill=black, circle, minimum size=0.3cm] (2) [below left of=1] {};
\node[fill=black, circle, minimum size=0.3cm] (3) [below right of=1] {};
\draw (2)--(1)--(3)--(2);
\end{tikzpicture}}}

\newcommand{\Gzero}[0]{
\resizebox{0.48cm}{0.35cm}{
\begin{tikzpicture}
\node[fill=black, circle, minimum size=0.3cm] (1) {};
\node[fill=black, circle, minimum size=0.3cm] (2) [below of=1] {};
\node[fill=black, circle, minimum size=0.3cm] (3) [right of=1] {};
\node[fill=black, circle, minimum size=0.3cm] (4) [below of=3] {};
\end{tikzpicture}}}

\newcommand{\Gone}[0]{
\resizebox{0.48cm}{0.35cm}{
\begin{tikzpicture}
\node[fill=black, circle, minimum size=0.3cm] (1) {};
\node[fill=black, circle, minimum size=0.3cm] (2) [below of=1] {};
\node[fill=black, circle, minimum size=0.3cm] (3) [right of=1] {};
\node[fill=black, circle, minimum size=0.3cm] (4) [below of=3] {};
\draw (1)--(2);
\end{tikzpicture}}}

\newcommand{\Gtwo}[0]{
\resizebox{0.48cm}{0.35cm}{
\begin{tikzpicture}
\node[fill=black, circle, minimum size=0.3cm] (1) {};
\node[fill=black, circle, minimum size=0.3cm] (2) [below of=1] {};
\node[fill=black, circle, minimum size=0.3cm] (3) [right of=1] {};
\node[fill=black, circle, minimum size=0.3cm] (4) [below of=3] {};
\draw (3)--(1)--(2);
\end{tikzpicture}}}

\newcommand{\Gthree}[0]{
\resizebox{0.48cm}{0.35cm}{
\begin{tikzpicture}
\node[fill=black, circle, minimum size=0.3cm] (1) {};
\node[fill=black, circle, minimum size=0.3cm] (2) [below of=1] {};
\node[fill=black, circle, minimum size=0.3cm] (3) [right of=1] {};
\node[fill=black, circle, minimum size=0.3cm] (4) [below of=3] {};
\draw (1)--(2);
\draw (3)--(4);
\end{tikzpicture}}}

\newcommand\nminusone{n-1}
\newcommand{\ms}[1]{\color{red}Mohit:   #1\color{black}}
\newcommand{\todo}[1]{\vspace{5 mm}\par \noindent \marginpar{\textsc{ToDo}}
\framebox{\begin{minipage}[c]{0.95 \textwidth} \tt #1
\end{minipage}}\vspace{5 mm}\par}

\newcommand{\cherries}[0]{
\resizebox{0.58cm}{0.35cm}{
\begin{tikzpicture}
\node[fill=black, circle, minimum size=0.3cm] (1) {};
\node[fill=black, circle, minimum size=0.3cm] (2) [below left of=1] {};
\node[fill=black, circle, minimum size=0.3cm] (3) [below right of=1] {};
\draw (2)--(1)--(3);
\end{tikzpicture}}}

\newcommand{\coloredcherries}[0]{
\resizebox{0.58cm}{0.35cm}{
\begin{tikzpicture}
\node[fill=black, circle, minimum size=0.3cm] (1) {};
\node[fill=black, circle, minimum size=0.3cm] (2) [below left of=1] {};
\node[fill=black, circle, minimum size=0.3cm] (3) [below right of=1] {};
\draw[thick] (2)--(1);
\draw[red, thick] (1)--(3);
\end{tikzpicture}}}

\newcommand{\coloredtwodisjointedges}[0]{
\resizebox{0.4cm}{0.35cm}{
\begin{tikzpicture}
\node[fill=black, circle, minimum size=0.3cm] (1) {};
\node[fill=black, circle, minimum size=0.3cm] (2) [below of=1] {};
\node[fill=black, circle, minimum size=0.3cm] (3) [right of=1] {};
\node[fill=black, circle, minimum size=0.3cm] (4) [below of=3] {};
\draw[thick] (2)--(1);
\draw[red, thick] (4)--(3);
\end{tikzpicture}}}

\newcommand{\coloreddoublededge}[0]{
\resizebox{0.15cm}{0.35cm}{
\begin{tikzpicture}
\node[fill=black, circle, minimum size=0.3cm] (1) {};
\node[fill=black, circle, minimum size=0.3cm] (2) [below of=1] {};
\draw[double, thick] (2)--(1);
\draw[red, thick] (2)--(1);
\end{tikzpicture}}}

\newcommand{\labeledHzero}[3]{%top label, bottom left label, bottom right label}
\resizebox{0.8cm}{0.4cm}{
\begin{tikzpicture}
\node[fill=black, circle, minimum size=0.3cm, label=right:\Huge{\textbf{#1}}] (1) {};
\node[fill=black, circle, minimum size=0.3cm, label=left:\Huge{\textbf{#2}}] (2) [below left of=1] {};
\node[fill=black, circle, minimum size=0.3cm, label=right:\Huge{\textbf{#3}}] (3) [below right of=1] {};
\end{tikzpicture}}}

\newcommand{\labeledHone}[3]{%top label, bottom left label, bottom right label}
\resizebox{0.8cm}{0.4cm}{
\begin{tikzpicture}
\node[fill=black, circle, minimum size=0.3cm, label=right:\Huge{\textbf{#1}}] (1) {};
\node[fill=black, circle, minimum size=0.3cm, label=left:\Huge{\textbf{#2}}] (2) [below left of=1] {};
\node[fill=black, circle, minimum size=0.3cm, label=right:\Huge{\textbf{#3}}] (3) [below right of=1] {};
\draw (2)--(1);
\end{tikzpicture}}}

\newcommand{\labeledHoneright}[3]{%top label, bottom left label, bottom right label}
\resizebox{0.8cm}{0.4cm}{
\begin{tikzpicture}
\node[fill=black, circle, minimum size=0.3cm, label=right:\Huge{\textbf{#1}}] (1) {};
\node[fill=black, circle, minimum size=0.3cm, label=left:\Huge{\textbf{#2}}] (2) [below left of=1] {};
\node[fill=black, circle, minimum size=0.3cm, label=right:\Huge{\textbf{#3}}] (3) [below right of=1] {};
\draw (3)--(1);
\end{tikzpicture}}}

\newcommand{\labeledHonebottom}[3]{%top label, bottom left label, bottom right label}
\resizebox{0.8cm}{0.4cm}{
\begin{tikzpicture}
\node[fill=black, circle, minimum size=0.3cm, label=right:\Huge{\textbf{#1}}] (1) {};
\node[fill=black, circle, minimum size=0.3cm, label=left:\Huge{\textbf{#2}}] (2) [below left of=1] {};
\node[fill=black, circle, minimum size=0.3cm, label=right:\Huge{\textbf{#3}}] (3) [below right of=1] {};
\draw (3)--(2);
\end{tikzpicture}}}

\newcommand{\labeledHtwo}[3]{%top label, bottom left label, bottom right label}
\resizebox{0.8cm}{0.4cm}{
\begin{tikzpicture}
\node[fill=black, circle, minimum size=0.3cm, label=right:\Huge{\textbf{#1}}] (1) {};
\node[fill=black, circle, minimum size=0.3cm, label=left:\Huge{\textbf{#2}}] (2) [below left of=1] {};
\node[fill=black, circle, minimum size=0.3cm, label=right:\Huge{\textbf{#3}}] (3) [below right of=1] {};
\draw (3)--(1)--(2);
\end{tikzpicture}}}

\newcommand{\labeledHtwoleft}[3]{%top label, bottom left label, bottom right label}
\resizebox{0.8cm}{0.4cm}{
\begin{tikzpicture}
\node[fill=black, circle, minimum size=0.3cm, label=right:\Huge{\textbf{#1}}] (1) {};
\node[fill=black, circle, minimum size=0.3cm, label=left:\Huge{\textbf{#2}}] (2) [below left of=1] {};
\node[fill=black, circle, minimum size=0.3cm, label=right:\Huge{\textbf{#3}}] (3) [below right of=1] {};
\draw (1)--(2)--(3);
\end{tikzpicture}}}

\newcommand{\labeledHtworight}[3]{%top label, bottom left label, bottom right label}
\resizebox{0.8cm}{0.4cm}{
\begin{tikzpicture}
\node[fill=black, circle, minimum size=0.3cm, label=right:\Huge{\textbf{#1}}] (1) {};
\node[fill=black, circle, minimum size=0.3cm, label=left:\Huge{\textbf{#2}}] (2) [below left of=1] {};
\node[fill=black, circle, minimum size=0.3cm, label=right:\Huge{\textbf{#3}}] (3) [below right of=1] {};
\draw (1)--(3)--(2);
\end{tikzpicture}}}

\newcommand{\labeledHthree}[3]{%top label, bottom left label, bottom right label}
\resizebox{0.8cm}{0.4cm}{
\begin{tikzpicture}
\node[fill=black, circle, minimum size=0.3cm, label=right:\Huge{\textbf{#1}}] (1) {};
\node[fill=black, circle, minimum size=0.3cm, label=left:\Huge{\textbf{#2}}] (2) [below left of=1] {};
\node[fill=black, circle, minimum size=0.3cm, label=right:\Huge{\textbf{#3}}] (3) [below right of=1] {};
\draw (1)--(3)--(2)--(1);
\end{tikzpicture}}}

\newcommand{\labeledcherries}[3]{%top label, bottom left label, bottom right label}
\resizebox{0.9cm}{0.4cm}{
\begin{tikzpicture}
\node[fill=black, circle, minimum size=0.3cm, label=right:\Huge{\textbf{#1}}] (1) {};
\node[fill=black, circle, minimum size=0.3cm, label=left:\Huge{\textbf{#2}}] (2) [below left of=1] {};
\node[fill=black, circle, minimum size=0.3cm, label=right:\Huge{\textbf{#3}}] (3) [below right of=1] {};
\draw (2)--(1)--(3);
\end{tikzpicture}}}

\newcommand{\flagzero}[4]{%top label, bottom left label, bottom right label, bottomest label}
\resizebox{0.8cm}{0.4cm}{
\begin{tikzpicture}
\node[fill=black, circle, minimum size=0.3cm, label=right:\Huge{\textbf{#1}}] (1) {};
\node[fill=black, circle, minimum size=0.3cm, label=left:\Huge{\textbf{#2}}] (2) [below left of=1] {};
\node[fill=black, circle, minimum size=0.3cm, label=right:\Huge{\textbf{#3}}] (3) [below right of=1] {};
\node[fill=black, circle, minimum size=0.3cm, label=right:\Huge{\textbf{#4}}] (4) [below left of=3] {};
\draw (2)--(1)--(3);
\end{tikzpicture}}}

\newcommand{\gzero}[4]{%top label, bottom left label, bottom right label, bottomest label}
\resizebox{0.8cm}{0.4cm}{
\begin{tikzpicture}
\node[fill=black, circle, minimum size=0.3cm, label=right:\Huge{\textbf{#1}}] (1) {};
\node[fill=black, circle, minimum size=0.3cm, label=left:\Huge{\textbf{#2}}] (2) [below left of=1] {};
\node[fill=black, circle, minimum size=0.3cm, label=right:\Huge{\textbf{#3}}] (3) [below right of=1] {};
\node[fill=black, circle, minimum size=0.3cm, label=right:\Huge{\textbf{#4}}] (4) [below left of=3] {};
\draw (3)--(2)--(1)--(3);
\end{tikzpicture}}}

\newcommand{\flagone}[4]{%top label, bottom left label, bottom right label, bottomest label}
\resizebox{0.8cm}{0.4cm}{
\begin{tikzpicture}
\node[fill=black, circle, minimum size=0.3cm, label=right:\Huge{\textbf{#1}}] (1) {};
\node[fill=black, circle, minimum size=0.3cm, label=left:\Huge{\textbf{#2}}] (2) [below left of=1] {};
\node[fill=black, circle, minimum size=0.3cm, label=right:\Huge{\textbf{#3}}] (3) [below right of=1] {};
\node[fill=black, circle, minimum size=0.3cm, label=right:\Huge{\textbf{#4}}] (4) [below left of=3] {};
\draw (2)--(1)--(3);
\draw (1)--(4);
\end{tikzpicture}}}

\newcommand{\gone}[4]{%top label, bottom left label, bottom right label, bottomest label}
\resizebox{0.8cm}{0.4cm}{
\begin{tikzpicture}
\node[fill=black, circle, minimum size=0.3cm, label=right:\Huge{\textbf{#1}}] (1) {};
\node[fill=black, circle, minimum size=0.3cm, label=left:\Huge{\textbf{#2}}] (2) [below left of=1] {};
\node[fill=black, circle, minimum size=0.3cm, label=right:\Huge{\textbf{#3}}] (3) [below right of=1] {};
\node[fill=black, circle, minimum size=0.3cm, label=right:\Huge{\textbf{#4}}] (4) [below left of=3] {};
\draw (3)--(2)--(1)--(3);
\draw (1)--(4);
\end{tikzpicture}}}

\newcommand{\flagtwo}[4]{%top label, bottom left label, bottom right label, bottomest label}
\resizebox{0.8cm}{0.4cm}{
\begin{tikzpicture}
\node[fill=black, circle, minimum size=0.3cm, label=right:\Huge{\textbf{#1}}] (1) {};
\node[fill=black, circle, minimum size=0.3cm, label=left:\Huge{\textbf{#2}}] (2) [below left of=1] {};
\node[fill=black, circle, minimum size=0.3cm, label=right:\Huge{\textbf{#3}}] (3) [below right of=1] {};
\node[fill=black, circle, minimum size=0.3cm, label=right:\Huge{\textbf{#4}}] (4) [below left of=3] {};
\draw (4)--(2)--(1)--(3);
\end{tikzpicture}}}

\newcommand{\gtwo}[4]{%top label, bottom left label, bottom right label, bottomest label}
\resizebox{0.8cm}{0.4cm}{
\begin{tikzpicture}
\node[fill=black, circle, minimum size=0.3cm, label=right:\Huge{\textbf{#1}}] (1) {};
\node[fill=black, circle, minimum size=0.3cm, label=left:\Huge{\textbf{#2}}] (2) [below left of=1] {};
\node[fill=black, circle, minimum size=0.3cm, label=right:\Huge{\textbf{#3}}] (3) [below right of=1] {};
\node[fill=black, circle, minimum size=0.3cm, label=right:\Huge{\textbf{#4}}] (4) [below left of=3] {};
\draw (4)--(2)--(1)--(3)--(2);
\end{tikzpicture}}}

\newcommand{\flagthree}[4]{%top label, bottom left label, bottom right label, bottomest label}
\resizebox{0.8cm}{0.4cm}{
\begin{tikzpicture}
\node[fill=black, circle, minimum size=0.3cm, label=right:\Huge{\textbf{#1}}] (1) {};
\node[fill=black, circle, minimum size=0.3cm, label=left:\Huge{\textbf{#2}}] (2) [below left of=1] {};
\node[fill=black, circle, minimum size=0.3cm, label=right:\Huge{\textbf{#3}}] (3) [below right of=1] {};
\node[fill=black, circle, minimum size=0.3cm, label=right:\Huge{\textbf{#4}}] (4) [below left of=3] {};
\draw (2)--(1)--(3)--(4);
\end{tikzpicture}}}

\newcommand{\gthree}[4]{%top label, bottom left label, bottom right label, bottomest label}
\resizebox{0.8cm}{0.4cm}{
\begin{tikzpicture}
\node[fill=black, circle, minimum size=0.3cm, label=right:\Huge{\textbf{#1}}] (1) {};
\node[fill=black, circle, minimum size=0.3cm, label=left:\Huge{\textbf{#2}}] (2) [below left of=1] {};
\node[fill=black, circle, minimum size=0.3cm, label=right:\Huge{\textbf{#3}}] (3) [below right of=1] {};
\node[fill=black, circle, minimum size=0.3cm, label=right:\Huge{\textbf{#4}}] (4) [below left of=3] {};
\draw (3)--(2)--(1)--(3)--(4);
\end{tikzpicture}}}

\newcommand{\flagfour}[4]{%top label, bottom left label, bottom right label, bottomest label}
\resizebox{0.8cm}{0.4cm}{
\begin{tikzpicture}
\node[fill=black, circle, minimum size=0.3cm, label=right:\Huge{\textbf{#1}}] (1) {};
\node[fill=black, circle, minimum size=0.3cm, label=left:\Huge{\textbf{#2}}] (2) [below left of=1] {};
\node[fill=black, circle, minimum size=0.3cm, label=right:\Huge{\textbf{#3}}] (3) [below right of=1] {};
\node[fill=black, circle, minimum size=0.3cm, label=right:\Huge{\textbf{#4}}] (4) [below left of=3] {};
\draw (2)--(1)--(3);
\draw (1)--(4)--(2);
\end{tikzpicture}}}

\newcommand{\gfour}[4]{%top label, bottom left label, bottom right label, bottomest label}
\resizebox{0.8cm}{0.4cm}{
\begin{tikzpicture}
\node[fill=black, circle, minimum size=0.3cm, label=right:\Huge{\textbf{#1}}] (1) {};
\node[fill=black, circle, minimum size=0.3cm, label=left:\Huge{\textbf{#2}}] (2) [below left of=1] {};
\node[fill=black, circle, minimum size=0.3cm, label=right:\Huge{\textbf{#3}}] (3) [below right of=1] {};
\node[fill=black, circle, minimum size=0.3cm, label=right:\Huge{\textbf{#4}}] (4) [below left of=3] {};
\draw (3)--(2)--(1)--(3);
\draw (1)--(4)--(2);
\end{tikzpicture}}}

\newcommand{\flagfive}[4]{%top label, bottom left label, bottom right label, bottomest label}
\resizebox{0.8cm}{0.4cm}{
\begin{tikzpicture}
\node[fill=black, circle, minimum size=0.3cm, label=right:\Huge{\textbf{#1}}] (1) {};
\node[fill=black, circle, minimum size=0.3cm, label=left:\Huge{\textbf{#2}}] (2) [below left of=1] {};
\node[fill=black, circle, minimum size=0.3cm, label=right:\Huge{\textbf{#3}}] (3) [below right of=1] {};
\node[fill=black, circle, minimum size=0.3cm, label=right:\Huge{\textbf{#4}}] (4) [below left of=3] {};
\draw (2)--(1)--(3);
\draw (1)--(4)--(3);
\end{tikzpicture}}}

\newcommand{\gfive}[4]{%top label, bottom left label, bottom right label, bottomest label}
\resizebox{0.8cm}{0.4cm}{
\begin{tikzpicture}
\node[fill=black, circle, minimum size=0.3cm, label=right:\Huge{\textbf{#1}}] (1) {};
\node[fill=black, circle, minimum size=0.3cm, label=left:\Huge{\textbf{#2}}] (2) [below left of=1] {};
\node[fill=black, circle, minimum size=0.3cm, label=right:\Huge{\textbf{#3}}] (3) [below right of=1] {};
\node[fill=black, circle, minimum size=0.3cm, label=right:\Huge{\textbf{#4}}] (4) [below left of=3] {};
\draw (3)--(2)--(1)--(3);
\draw (1)--(4)--(3);
\end{tikzpicture}}}

\newcommand{\flagsix}[4]{%top label, bottom left label, bottom right label, bottomest label}
\resizebox{0.8cm}{0.4cm}{
\begin{tikzpicture}
\node[fill=black, circle, minimum size=0.3cm, label=right:\Huge{\textbf{#1}}] (1) {};
\node[fill=black, circle, minimum size=0.3cm, label=left:\Huge{\textbf{#2}}] (2) [below left of=1] {};
\node[fill=black, circle, minimum size=0.3cm, label=right:\Huge{\textbf{#3}}] (3) [below right of=1] {};
\node[fill=black, circle, minimum size=0.3cm, label=right:\Huge{\textbf{#4}}] (4) [below left of=3] {};
\draw (2)--(1)--(3);
\draw (2)--(4)--(3);
\end{tikzpicture}}}

\newcommand{\gsix}[4]{%top label, bottom left label, bottom right label, bottomest label}
\resizebox{0.8cm}{0.4cm}{
\begin{tikzpicture}
\node[fill=black, circle, minimum size=0.3cm, label=right:\Huge{\textbf{#1}}] (1) {};
\node[fill=black, circle, minimum size=0.3cm, label=left:\Huge{\textbf{#2}}] (2) [below left of=1] {};
\node[fill=black, circle, minimum size=0.3cm, label=right:\Huge{\textbf{#3}}] (3) [below right of=1] {};
\node[fill=black, circle, minimum size=0.3cm, label=right:\Huge{\textbf{#4}}] (4) [below left of=3] {};
\draw (3)--(2)--(1)--(3);
\draw (2)--(4)--(3);
\end{tikzpicture}}}

\newcommand{\flagseven}[4]{%top label, bottom left label, bottom right label, bottomest label}
\resizebox{0.8cm}{0.4cm}{
\begin{tikzpicture}
\node[fill=black, circle, minimum size=0.3cm, label=right:\Huge{\textbf{#1}}] (1) {};
\node[fill=black, circle, minimum size=0.3cm, label=left:\Huge{\textbf{#2}}] (2) [below left of=1] {};
\node[fill=black, circle, minimum size=0.3cm, label=right:\Huge{\textbf{#3}}] (3) [below right of=1] {};
\node[fill=black, circle, minimum size=0.3cm, label=right:\Huge{\textbf{#4}}] (4) [below left of=3] {};
\draw (4)--(2)--(1)--(3)--(4)--(1);
\end{tikzpicture}}}

\newcommand{\gseven}[4]{%top label, bottom left label, bottom right label, bottomest label}
\resizebox{0.8cm}{0.4cm}{
\begin{tikzpicture}
\node[fill=black, circle, minimum size=0.3cm, label=right:\Huge{\textbf{#1}}] (1) {};
\node[fill=black, circle, minimum size=0.3cm, label=left:\Huge{\textbf{#2}}] (2) [below left of=1] {};
\node[fill=black, circle, minimum size=0.3cm, label=right:\Huge{\textbf{#3}}] (3) [below right of=1] {};
\node[fill=black, circle, minimum size=0.3cm, label=right:\Huge{\textbf{#4}}] (4) [below left of=3] {};
\draw (4)--(2)--(1)--(3)--(4)--(1);
\draw (2)--(3);
\end{tikzpicture}}}

\newcommand{\Kfour}[4]{%top label, bottom left label, bottom right label, bottomest label}
\resizebox{0.8cm}{0.4cm}{
\begin{tikzpicture}
\node[fill=black, circle, minimum size=0.3cm, label=left:\Huge{\textbf{#1}}] (1) {};
\node[fill=black, circle, minimum size=0.3cm, label=right:\Huge{\textbf{#2}}] (2) [right of=1] {};
\node[fill=black, circle, minimum size=0.3cm, label=right:\Huge{\textbf{#3}}] (3) [below of=2] {};
\node[fill=black, circle, minimum size=0.3cm, label=left:\Huge{\textbf{#4}}] (4) [below of=1] {};
\draw (4)--(2)--(1)--(3)--(4)--(1)--(2)--(3);
\end{tikzpicture}}}

\newcommand{\flagnotflag}[4]{%top label, bottom left label, bottom right label}
\resizebox{0.8cm}{0.6cm}{
\begin{tikzpicture}
\node[fill=black, circle, minimum size=0.3cm, label=right:\Huge{\textbf{#1}}] (1) {};
\node[fill=black, circle, minimum size=0.3cm, label=left:\Huge{\textbf{#2}}] (2) [below left of=1] {};
\node[fill=black, circle, minimum size=0.3cm, label=right:\Huge{\textbf{#3}}] (3) [below right of=1] {};
\node[fill=black, circle, minimum size=0.3cm, label=right:\Huge{\textbf{#4}}] (4) [below of=3] {};
\draw (2)--(1)--(3)--(4);
\end{tikzpicture}}}

\newcommand{\notflag}[4]{%top label, bottom left label, bottom right label}
\resizebox{0.8cm}{0.6cm}{
\begin{tikzpicture}
\node[fill=black, circle, minimum size=0.3cm, label=right:\Huge{\textbf{#1}}] (1) {};
\node[fill=black, circle, minimum size=0.3cm, label=left:\Huge{\textbf{#2}}] (2) [below left of=1] {};
\node[fill=black, circle, minimum size=0.3cm, label=right:\Huge{\textbf{#3}}] (3) [below right of=1] {};
\node[fill=black, circle, minimum size=0.3cm, label=right:\Huge{\textbf{#4}}] (4) [below of=3] {};
\draw (3)--(2)--(1)--(3)--(4);
\end{tikzpicture}}}

\newcommand{\labeledclaw}[4]{%top label, bottom left label, bottom middle label, bottom right label}
\resizebox{0.8cm}{0.6cm}{
\begin{tikzpicture}
\node[fill=black, circle, minimum size=0.3cm, label=right:\Huge{\textbf{#1}}] (1) {};
\node[fill=black, circle, minimum size=0.3cm, label=left:\Huge{\textbf{#2}}] (2) [below left of=1] {};
\node[fill=black, circle, minimum size=0.3cm, label=below:\Huge{\textbf{#3}}] (3) [below of=1] {};
\node[fill=black, circle, minimum size=0.3cm, label=right:\Huge{\textbf{#4}}] (4) [below right of=1] {};
\draw (2)--(1)--(3)--(1)--(4);
\end{tikzpicture}}}

\newcommand{\labeledtriangle}[3]{%top label, bottom left label, bottom right label}
\resizebox{0.8cm}{0.4cm}{
\begin{tikzpicture}
\node[fill=black, circle, minimum size=0.3cm, label=right:\Huge{\textbf{#1}}] (1) {};
\node[fill=black, circle, minimum size=0.3cm, label=left:\Huge{\textbf{#2}}] (2) [below left of=1] {};
\node[fill=black, circle, minimum size=0.3cm, label=right:\Huge{\textbf{#3}}] (3) [below right of=1] {};
\draw (2)--(1)--(3);
\end{tikzpicture}}}

\newcommand{\vedge}[0]{
\resizebox{0.12cm}{0.35cm}{
\begin{tikzpicture}
\node[fill=black, circle, minimum size=0.3cm] (1) {};
\node[fill=black, circle, minimum size=0.3cm] (2) [below of=1] {};
\draw (2)--(1);
\end{tikzpicture}}}

\newcommand{\vnonedge}[0]{
\resizebox{0.12cm}{0.35cm}{
\begin{tikzpicture}
\node[fill=black, circle, minimum size=0.3cm] (1) {};
\node[fill=black, circle, minimum size=0.3cm] (2) [below of=1] {};
\end{tikzpicture}}}

\newcommand{\labeledvedge}[2]{%top label, bottom label
\resizebox{0.32cm}{0.4cm}{
\begin{tikzpicture}
\node[fill=black, circle, minimum size=0.3cm, label=right:\Huge{\textbf{#1}}] (1) {};
\node[fill=black, circle, minimum size=0.3cm, label=right:\Huge{\textbf{#2}}] (2) [below of=1] {};
\draw (2)--(1);
\end{tikzpicture}}}

\newcommand{\labeledvnonedge}[2]{%top label, bottom label
\resizebox{0.32cm}{0.4cm}{
\begin{tikzpicture}
\node[fill=black, circle, minimum size=0.3cm, label=right:\Huge{\textbf{#1}}] (1) {};
\node[fill=black, circle, minimum size=0.3cm, label=right:\Huge{\textbf{#2}}] (2) [below of=1] {};
\end{tikzpicture}}}

\newcommand{\hedge}[0]{
\resizebox{0.4cm}{0.35cm}{
\begin{tikzpicture}
\node[fill=black, circle, minimum size=0.3cm] (1) {};
\node[fill=black, circle, minimum size=0.3cm] (2) [right of=1] {};
\draw (2)--(1);
\end{tikzpicture}}}

\newcommand{\labeledhedge}[2]{%top label, bottom label
\resizebox{0.4cm}{0.25cm}{
\begin{tikzpicture}
\node[fill=black, circle, minimum size=0.3cm, label=below:\Huge{\textbf{#1}}] (1) {};
\node[fill=black, circle, minimum size=0.3cm, label=below:\Huge{\textbf{#2}}] (2) [right of=1] {};
\draw (2)--(1);
\end{tikzpicture}}}

\newcommand{\labeledleftwave}[3]{%left label, middle label, right label
\resizebox{0.8cm}{0.4cm}{
\begin{tikzpicture}
\node[fill=black, circle, minimum size=0.3cm, label=below:\Huge{\textbf{#1}}] (1) {};
\node[fill=black, circle, minimum size=0.3cm, label=below:\Huge{\textbf{#2}}] (2) [right of=1] {};
\node[fill=black, circle, minimum size=0.3cm, label=below:\Huge{\textbf{#3}}] (3) [right of=2] {};
\draw (1)--(2);
\draw [thick] (2,0) arc (60:120:2cm);
\end{tikzpicture}}}

\newcommand{\labeledrightwave}[3]{%left label, middle label, right label
\resizebox{0.8cm}{0.4cm}{
\begin{tikzpicture}
\node[fill=black, circle, minimum size=0.3cm, label=below:\Huge{\textbf{#1}}] (1) {};
\node[fill=black, circle, minimum size=0.3cm, label=below:\Huge{\textbf{#2}}] (2) [right of=1] {};
\node[fill=black, circle, minimum size=0.3cm, label=below:\Huge{\textbf{#3}}] (3) [right of=2] {};
\draw (2)--(3);
\draw [thick] (2,0) arc (60:120:2cm);
\end{tikzpicture}}}

\newcommand{\labeledupdownL}[3]{%top left label, top right label, bottom label
\resizebox{0.75cm}{0.45cm}{
\begin{tikzpicture}
\node[fill=black, circle, minimum size=0.3cm, label=left:\Huge{\textbf{#1}}] (1) {};
\node[fill=black, circle, minimum size=0.3cm, label=right:\Huge{\textbf{#2}}] (2) [right of=1] {};
\node[fill=black, circle, minimum size=0.3cm, label=left:\Huge{\textbf{#3}}] (3) [below of=1] {};
\draw (3)--(1)--(2);
\end{tikzpicture}}}

\newcommand{\labeledupdownmirrorL}[3]{%top left label, top right label, bottom label
\resizebox{0.75cm}{0.45cm}{
\begin{tikzpicture}
\node[fill=black, circle, minimum size=0.3cm, label=left:\Huge{\textbf{#1}}] (1) {};
\node[fill=black, circle, minimum size=0.3cm, label=right:\Huge{\textbf{#2}}] (2) [right of=1] {};
\node[fill=black, circle, minimum size=0.3cm, label=left:\Huge{\textbf{#3}}] (3) [below of=1] {};
\draw (3)--(2)--(1);
\end{tikzpicture}}}

\newcommand{\labeledV}[3]{%top left label, top right label, bottom label
\resizebox{0.75cm}{0.45cm}{
\begin{tikzpicture}
\node[fill=black, circle, minimum size=0.3cm, label=left:\Huge{\textbf{#1}}] (1) {};
\node[fill=black, circle, minimum size=0.3cm, label=right:\Huge{\textbf{#2}}] (2) [right of=1] {};
\node[fill=black, circle, minimum size=0.3cm, label=left:\Huge{\textbf{#3}}] (3) [below of=1] {};
\draw (2)--(3)--(1);
\end{tikzpicture}}}

\newcommand{\labeledstraight}[3]{%left label, middle label, right label
\resizebox{0.8cm}{0.33cm}{
\begin{tikzpicture}
\node[fill=black, circle, minimum size=0.3cm, label=below:\Huge{\textbf{#1}}] (1) {};
\node[fill=black, circle, minimum size=0.3cm, label=below:\Huge{\textbf{#2}}] (2) [right of=1] {};
\node[fill=black, circle, minimum size=0.3cm, label=below:\Huge{\textbf{#3}}] (3) [right of=2] {};
\draw (1)--(2)--(3);
\end{tikzpicture}}}

\newcommand{\Fzero}[1]{%top label}
\resizebox{0.8cm}{0.4cm}{
\begin{tikzpicture}
\node[fill=black, circle, minimum size=0.3cm, label=right:\Huge{\textbf{#1}}] (1) {};
\node[fill=black, circle, minimum size=0.3cm] (2) [below left of=1] {};
\node[fill=black, circle, minimum size=0.3cm] (3) [below right of=1] {};
\end{tikzpicture}}}

\newcommand{\Fone}[1]{%top label}
\resizebox{0.8cm}{0.4cm}{
\begin{tikzpicture}
\node[fill=black, circle, minimum size=0.3cm, label=right:\Huge{\textbf{#1}}] (1) {};
\node[fill=black, circle, minimum size=0.3cm] (2) [below left of=1] {};
\node[fill=black, circle, minimum size=0.3cm] (3) [below right of=1] {};
\draw (2)--(1);
\end{tikzpicture}}}

\newcommand{\Fones}[1]{%top label}
\resizebox{0.8cm}{0.4cm}{
\begin{tikzpicture}
\node[fill=black, circle, minimum size=0.3cm, label=right:\Huge{\textbf{#1}}] (1) {};
\node[fill=black, circle, minimum size=0.3cm] (2) [below left of=1] {};
\node[fill=black, circle, minimum size=0.3cm] (3) [below right of=1] {};
\draw (3)--(1);
\end{tikzpicture}}}

\newcommand{\Ftwo}[1]{%top label}
\resizebox{0.8cm}{0.4cm}{
\begin{tikzpicture}
\node[fill=black, circle, minimum size=0.3cm, label=right:\Huge{\textbf{#1}}] (1) {};
\node[fill=black, circle, minimum size=0.3cm] (2) [below left of=1] {};
\node[fill=black, circle, minimum size=0.3cm] (3) [below right of=1] {};
\draw (2)--(3);
\end{tikzpicture}}}

\newcommand{\Fthree}[1]{%top label}
\resizebox{0.8cm}{0.4cm}{
\begin{tikzpicture}
\node[fill=black, circle, minimum size=0.3cm, label=right:\Huge{\textbf{#1}}] (1) {};
\node[fill=black, circle, minimum size=0.3cm] (2) [below left of=1] {};
\node[fill=black, circle, minimum size=0.3cm] (3) [below right of=1] {};
\draw (2)--(1)--(3);
\end{tikzpicture}}}

\newcommand{\Ffour}[1]{%top label}
\resizebox{0.8cm}{0.4cm}{
\begin{tikzpicture}
\node[fill=black, circle, minimum size=0.3cm, label=right:\Huge{\textbf{#1}}] (1) {};
\node[fill=black, circle, minimum size=0.3cm] (2) [below left of=1] {};
\node[fill=black, circle, minimum size=0.3cm] (3) [below right of=1] {};
\draw (1)--(2)--(3);
\end{tikzpicture}}}

\newcommand{\Ffours}[1]{%top label}
\resizebox{0.8cm}{0.4cm}{
\begin{tikzpicture}
\node[fill=black, circle, minimum size=0.3cm, label=right:\Huge{\textbf{#1}}] (1) {};
\node[fill=black, circle, minimum size=0.3cm] (2) [below left of=1] {};
\node[fill=black, circle, minimum size=0.3cm] (3) [below right of=1] {};
\draw (1)--(3)--(2);
\end{tikzpicture}}}

\newcommand{\Ffive}[1]{%top label}
\resizebox{0.8cm}{0.4cm}{
\begin{tikzpicture}
\node[fill=black, circle, minimum size=0.3cm, label=right:\Huge{\textbf{#1}}] (1) {};
\node[fill=black, circle, minimum size=0.3cm] (2) [below left of=1] {};
\node[fill=black, circle, minimum size=0.3cm] (3) [below right of=1] {};
\draw (1)--(3)--(2)--(1);
\end{tikzpicture}}}

\newcommand{\point}[0]{
\resizebox{0.13cm}{0.09cm}{
\begin{tikzpicture}
\node[fill=black, circle, minimum size=0.3cm] (1) {};
\end{tikzpicture}}}

\date{\today}

\title[Symmetric Sums of Squares over $k$-Subset Hypercubes]{Symmetric Sums of Squares over\\ $k$-Subset Hypercubes}
\thanks{Saunderson and Thomas were partially supported by 
the National Science Foundation under the grants CCF-1409836 and DMS-1418728 respectively. }

\author{Annie Raymond}
\address{Department of Mathematics, University of Washington, Box
  354350, Seattle, WA 98195, USA} \email{raymonda@uw.edu}

\author{James Saunderson}
\address{Department of Electrical Engineering, University of Washington, Box 352500, Seattle, WA 98195, USA} \email{jamesfs@uw.edu}

\author{Mohit Singh}
\address{Microsoft Research,
99/2934, 1 Microsoft Way,
Redmond, WA 98052}
\email{mohits@microsoft.com}

\author{Rekha R. Thomas}
\address{Department of Mathematics, University of Washington, Box
  354350, Seattle, WA 98195, USA} \email{rrthomas@uw.edu}

\begin{document}

\begin{abstract}
We consider the problem of finding
sum of squares (sos) expressions to establish
the non-negativity of a symmetric polynomial over a discrete
hypercube whose coordinates are indexed by the $k$-element subsets of $[n]$.
For simplicity, we focus on the case $k=2$, but our results extend naturally to all values of $k \geq 2$.
We develop a variant of the Gatermann-Parrilo symmetry-reduction method
tailored to our setting that allows for several simplifications and a connection to flag algebras.

We show that every symmetric polynomial that has a sos expression
of a fixed degree also has a succinct sos expression whose size depends
only on the degree and not on the number of variables. Our method bypasses much of the technical
difficulties needed to apply the Gatermann-Parrilo method, and offers flexibility in
obtaining succinct sos expressions that are combinatorially meaningful.
As a byproduct of our results, we arrive at a natural representation-theoretic
justification for the concept of flags as introduced by Razborov in his flag algebra calculus.
Furthermore, this connection exposes a family of non-negative polynomials that
cannot be certified with any fixed set of flags, answering a question of Razborov in the context of our finite setting.
\end{abstract}

\maketitle

% !TEX root =  main.tex
\section{Introduction}

Polynomial optimization over discrete hypercubes plays a central role in many areas 
such as combinatorial optimization, decision problems and proof complexity. 
In many situations, it is natural to consider $k$-subset hypercubes by which we mean 
discrete hypercubes whose coordinates are indexed 
by the $k$-element subsets of a ground set $[n]$. 
For instance, a major focus in extremal graph theory is to 
optimize the edge (hyperedge) density in families of graphs (hypergraphs) with 
specified structure which can be cast as optimization problems 
over $k$-subset hypercubes. In this scenario, as in many others, the polynomial to be optimized is often 
symmetric which allows representation-theoretic techniques to dramatically cut down on computations.
In this paper, we consider the general problem of optimizing a symmetric polynomial over a $k$-subset 
hypercube $\mathcal{V}_{n,k} := \{0,1\}^{\binom{n}{k}}$.  We focus on the case $k=2$, and hence use the 
notation $\mathcal{V}_{n}$ in place of $\mathcal{V}_{n,2} = \{0,1\}^{\binom{n}{2}}$ throughout.
As we will explain in Section~\ref{sec:discussion}, our results extend naturally to all $k \geq 2$.

Phrased differently, our central problem is to certify the non-negativity of a symmetric polynomial $\mathsf{p}$ over 
$\mathcal{V}_n$ which can be done by finding a sum of squares (sos) expression that equals $\mathsf{p}$ as a function on $\mathcal{V}_n$. In \cite{GatermannParrilo}, Gatermann and Parrilo showed how to use representation theory to simplify the computations involved in finding a sos representation of a polynomial $\mathsf{p}$ that is invariant under the action of a finite group. 
We propose a variant of their method that is adapted to the 
combinatorics in our setting, and hence, offers many simplifications and advantages. Among the highlights is 
a proof that if $\mathsf{p}$ has a sos certificate of degree $d$, then it also has a succinct sos expression whose 
size depends on $d$ but is independent of $n$. Secondly, we show that Razborov's theory of flags makes a natural 
entry into this problem as a byproduct of the representation theory of the symmetric group. 

We now introduce some notation that will help us elaborate on our results. Fix a positive integer $n$ and let $\mathbb{R}[{\mathsf{x}}]:=\mathbb{R}[\mathsf{x}_{ij} \,:\, 1 \leq i < j \leq n]$ be the polynomial ring in variables indexed by the pairs $ij$ where $1 \leq i < j \leq n$.  The set of polynomials that vanish on the discrete hypercube $\mathcal{V}_n$ is the ideal $\mathcal{I}_n$ in $\mathbb{R}[{\mathsf{x}}]$ generated by the ${n \choose 2}$ polynomials $\mathsf{x}_{ij}^2 - \mathsf{x}_{ij}$ for all $1 \leq i < j \leq n$. The set of functions on $\mathcal{V}_n$ may be identified with the quotient ring $\mathbb{R}[\mathsf{x}] / \mathcal{I}_n$. 
Recall that two polynomials $f,g \in \mathbb{R}[\mathsf{x}]$ represent the same function in $\mathbb{R}[\mathcal{V}_n]$ 
if and only if $f - g \in \mathcal{I}_n$, written as $f \equiv g \textup{ mod } \mathcal{I}_n$. This algebraic language is helpful in examples. 
The elements of $\mathbb{R}[\mathsf{x}] / \mathcal{I}_n$ are in bijection with the {\em square-free} polynomials in $\mathbb{R}[\mathsf{x}]$, namely, those polynomials in which every monomial is square-free or multilinear.
As a vector space, it will be convenient to identify $\mathbb{R}[\mathcal{V}_n]$ with the set of all square-free polynomials in $\mathbb{R}[\mathsf{x}]$. Let $\mathbb{R}[{\mathsf{x}}]_{\leq d}$ denote the polynomials of degree at most $d$ in $\mathbb{R}[{\mathsf{x}}]$. Denote by $\mathbb{R}[\mathcal{V}_n]_{\leq d}$ the set of functions of degree at most $d$ in $\mathbb{R}[\mathcal{V}_n]$, namely the quotient ring 
$\mathbb{R}[\mathsf{x}]_{\leq d}/\mathcal{I}_n$.  As a vector space, $\mathbb{R}[\mathcal{V}_n]_{\leq d}$ can be identified with the set of all square-free polynomials of degree at most $d$.

The symmetric group $\mathfrak{S}_n$ acts on $\mathbb{R}[{\mathsf{x}}]$ by linearly extending the action of $\mathfrak{S}_n$ on monomials via $\mathfrak{s} \cdot \mathsf{x}_{ij}:=\mathsf{x}_{\mathfrak{s}(i)\mathfrak{s}(j)}$ for each $\mathfrak{s}\in \mathfrak{S}_n$. Since the ideal $\mathcal{I}_n$ is invariant under this action, the action of $\mathfrak{S}_n$ extends to $\mathbb{R}[\mathcal{V}_{n}]$ and $\mathbb{R}[\mathcal{V}_n]_{\leq d}$. We say that a polynomial $\mathsf{p}$ is {\em symmetric} or $\mathfrak{S}_n$-{\em invariant} if it is fixed under this action of $\mathfrak{S}_n$. 

Every non-negative $\mathsf{p}$ in $\mathbb{R}[\mathcal{V}_n]$ is a sum of squares, i.e., $\mathsf{p} = \sum \mathsf{f}_i^2$ 
for finitely many $\mathsf{f}_i \in \mathbb{R}[\mathcal{V}_n]$ \cite{PabloTechReport}, see also \cite[Theorem 2.4]{MoniqueSurvey}. If the $\mathsf{f}_i$ are restricted to come from a fixed subspace $V\subseteq \mathbb{R}[\mathcal{V}_n]$, then we say that $\mathsf{p}$ is $V$-sos.  If $p$ is $\mathbb{R}[\mathcal{V}_{n}]_{\leq d}$-sos, we simply say that $\mathsf{p}$ is $d$-sos.

If $\mathsf{p} \in \mathbb{R}[\mathcal{V}_n]$ is $V$-sos, then a sos certificate for it can be found by solving a semidefinite program (SDP) 
 (see, for instance,~\cite[Chapter 3]{BPTSIAMBook}). 
Indeed, this can be done by finding a positive semidefinite (psd) matrix
$Q$ such that 
$\mathsf{p} = \textup{tr}( Q\, \bm{\mathsf{v}}\bm{\mathsf{v}}^\top )$, 
where $\bm{\mathsf{v}}$ is a vector containing the elements in a basis of $V$, and $\textup{tr}(\cdot)$ stands for trace. In particular, if $\mathsf{p}$ is $d$-sos, we could choose the entries of $\bm{\mathsf{v}}$ 
to consist of all square-free monomials of degree at most $d$. Since
the number of such monomials is $\displaystyle{\sum_{i=0}^{d}\binom{\binom{n}{2}}{i}}$, the size of $Q$
becomes unwieldy as $n$ grows.

If $\mathsf{p}$ is $d$-sos and invariant with respect to a finite group, Gatermann and Parrilo~\cite{GatermannParrilo}
show how to find sos expressions for $\mathsf{p}$ by solving a (potentially) much smaller SDP than the one mentioned above.
In our setting, their starting point is the isotypic decomposition of the vector
space $V = \RR[\cV_n]_{\leq d}$ under the action of $\fS_n$. This 
decomposition has the form $V = \bigoplus_{\bm{\lambda}}  V_{\bm{\lambda}}$ 
where ${\bm{\lambda}}$ is a partition of $n$, and each isotypic $V_{\bm{\lambda}}$ is in turn a direct
sum of $m_{\bm{\lambda}}$ copies of a single irreducible representation $S_{\bm{\lambda}}$ for $\fS_n$.
Applying the results of~\cite{GatermannParrilo} to our setting, one can show that  
there exist $m_{\bm{\lambda}}\times m_{\bm{\lambda}}$ psd matrices $\tilde{Q}_{\bm{\lambda}}$ such that 
\begin{equation}
\label{eq:GP-intro}
\mathsf{p} = \sum_{\bm{\lambda}} 
\textup{tr}\left( \tilde{Q}_{\bm{\lambda}}\, \sym(\bm{\tilde{\mathsf{v}}_{\bm{\lambda}}}\bm{\tilde{\mathsf{v}}_{\bm{\lambda}}}^\top)\right).
\end{equation}
Here, for each partition $\bm{\lambda}$, the entries of $\bm{\tilde{\mathsf{v}}_{\bm{\lambda}}}$ consist of elements in a 
basis for its $m_{\bm{\lambda}}$-dimensional multiplicity space 
(which is discussed in Section~\ref{sec:bases_sn} and Appendix~\ref{app:appendix1}). The operation $\textup{sym}(\cdot)$ 
symmetrizes its argument under the group action.
Hence, if $\mathsf{p}$ is symmetric and $d$-sos, 
we can find a symmetry-reduced sos description of $\mathsf{p}$ by solving a SDP
of size $\sum_{\bm{\lambda}} m_{\bm{\lambda}}$.

A priori, it is unclear how much smaller  the size of the Gatermann-Parrilo SDP is compared to the dimension of $V$.
Another major challenge is that constructing this SDP requires the explicit knowledge of a 
basis for each of the multiplicity spaces (i.e., appropriate vectors $\bm{\tilde{\mathsf{v}}}_{\bm{\lambda}}$). 
The algorithm outlined in~\cite{GatermannParrilo} requires constructing a symmetry-adapted basis for $V$ (see, e.g.,~\cite{FaesslerStiefelBook}), 
which is computationally costly, having complexity that depends on both $n$ and $d$.

In this paper, we focus on finding sos certificates for symmetric polynomials over discrete hypercubes $\mathcal{V}_{n,k}$ when 
$k\geq 2$. In these cases, little is explicitly known about how the 
space $\RR[\cV_{n,k}]$ of square-free polynomials in $\binom{n}{k}$ variables decomposes under the action of $\fS_n$ 
on $k$-tuples. This makes a direct application of the Gatermann-Parrilo method infeasible in this setting. This is in sharp contrast with the 
simpler case $k=1$ for which 
the isotypic decomposition of $\RR[\cV_{n,1}]_{\leq d}$, and hence all the information required for the Gatermann-Parrilo SDP, 
is explicitly known (see, e.g.,~\cite{BlekhermanGouveiaPfeiffer}). This decomposition underpins a number of recent results related to non-negative 
$\fS_n$-invariant polynomials on $\{0,1\}^n$~\cite{BlekhermanGouveiaPfeiffer,lee2016sum,kurpisz2016sum}.

\subsection{Our results} 
\label{sec:our-results}
Our first contribution is to show that if $\mathsf{p}$ is symmetric and $d$-sos, 
then it has a symmetry-reduced sos certificate that can be obtained by solving a SDP of size independent of $n$.
Throughout, we will informally refer to such a sos expression as being \emph{succinct}.
We establish this result in two steps. First,
we show that the number of partitions needed in~\eqref{eq:GP-intro} is bounded above by
$p(0)+p(1)+p(2)+\cdots +p(2d)$ (where $p(i)$ is the number of partitions of $i$), a quantity 
that is independent of $n$. Second, we show that each $m_{\bm{\lambda}}$ is also bounded above by a quantity that is 
independent of $n$ (see Proposition~\ref{prop:mlambda-indn} for a precise statement).

Our next contribution is to develop a variant of the Gatermann-Parrilo method tailored to our setting,
that circumvents the representation-theoretic difficulties inherent in constructing bases for the multiplicity spaces. 
In particular, we show how to construct subspaces that contain the multiplicity spaces, have dimension 
depending only on $d$, and that are spanned by combinatorially meaningful polynomials that can be enumerated easily.
One such spanning set arises as polynomial analogs 
of a specific collection of partially labeled graphs present in both the theory of graph homomorphisms \cite{LovaszBook} and flag algebras \cite{RazborovFlagAlgebras}. Partially labeled graphs are called \emph{flags} in the latter. We will use this terminology for brevity and since we make several connections 
to the work in \cite{RazborovFlagAlgebras}.  

A key result coming from the connections we make to flags is that if $\mathsf{p}$ is symmetric and $d$-sos,
then there is an sos expression for $\mathsf{p}$ that uses only flag polynomials from flags on at most $2d$ vertices.
Further, we prove that even particular restricted flag sos expressions commonly
found in the flag algebra literature suffice for this result. Together with
\cite{RST}, this implies the surprising result that flag methods are just as
capable as standard symmetry-reduction methods for providing sos certificates
for symmetric polynomials over $\mathcal{V}_n$. Moreover, this connection to
flags offers a family of symmetric polynomials of fixed degree that cannot be
certified with any fixed set of flags, answering a question in
\cite{RazborovFlagAlgebras} in the finite setting.

The theories of flag algebras and graph homomorphisms have emerged as powerful
tools for establishing  asymptotic (symmetric) inequalities among graph
densities in extremal graph theory, by expressing them as sums of squares up to
an error that vanishes asymptotically.  Viewed as optimization problems, these are instances of polynomial
optimization with infinitely many variables. 
Our results show that there is much to be
gained by pausing at a finite $n$.  We show that flags can be used to exactly certify
the non-negativity of {\em any} symmetric polynomial over the discrete
hypercube $\mathcal{V}_{n}$ and its subsets. We illustrate our methods through a problem from Ramsey theory
and a Tur\'an problem for $4$-cycles.

\subsection{Organization of the paper}
This paper is organized as follows. In Section~\ref{sec:prelim}, we introduce some concepts from the representation theory of $\fS_n$.
In particular, we define a key ingredient for the paper, namely subspaces $W_{\tau_{\bm{\lambda}}}$ indexed by tableaux of shape $\bm{\lambda}$,
that  are isomorphic to the multiplicity spaces referred to in Section~\ref{sec:our-results}. These subspaces play a crucial role in the 
Gatermann-Parrilo method tailored to our setting, which we present in Section~\ref{sec:GP}. A proof can be found in Appendix~\ref{app:appendix1}.
We prove in Section~\ref{sec:iso-support} that if $d$ is fixed, then 
the number of partitions that are needed in the resulting sos certificate
for $\mathsf{p}$ is bounded by a function of $d$ that is independent of $n$. 
In Section~\ref{sec:GP via spanning sets}, we provide a variant of the Gatermann-Parrilo method that only requires spanning sets for the 
subspaces $W_{\tau_{\bm{\lambda}}}$ as opposed to bases. 
In Section~\ref{sec:spanning sets}, we construct three such interrelated spanning sets,
all of which have the property that their sizes depend on $d$ but not $n$. In
the process, we naturally encounter flags and their densities as studied by Razborov. 
In Section~\ref{sec:razborov sos}, we prove that the flag sos expressions that emerge from our method
can be simplified to take on the more restricted form commonly found in the flag algebra literature. 
We also show that every symmetric $d$-sos expression can be retrieved from flags on at most $2d$ vertices. 
Our methods naturally extend to subsets of $\mathcal{V}_n$. 
We illustrate this in Section~\ref{sec:examples} by giving sos proofs of two known results. 
The first is an upper bound on the edge density of a $n$-vertex graph that avoids $4$-cycles, 
and the second is that the Ramsey
number $R(3,3)=6$.  The technical details of the Ramsey proof are in 
Appendix~\ref{app:Ramsey sos}.
In Section~\ref{sec:infinite_example}, we give a family of non-negative symmetric polynomials
that cannot be certified by a fixed collection of flags.
We close the paper in Section~\ref{sec:discussion} with a discussion of our results, and their extension to hypergraphs.

\smallskip

\noindent{\bf{Acknowledgments.}} Several people offered valuable input to this paper. 
In particular, we thank Albert Atserias, Monty McGovern, Pablo Parrilo and Sasha Razborov. We 
especially thank Greg Blekherman for his help with several facets of the representation theory 
that underlies this work.

% !TEX root =  main.tex
\section{Sums of squares certificates for symmetric polynomials}
\label{sec:bases_sn}

Suppose that $\mathsf{p}\in \RR[\cV_n]$ is $d$-sos. In other words, suppose
that $\mathsf{p} = \sum \mathsf{f}_i^2$
where $\mathsf{f}_i\in \RR[\cV_n]_{\leq d}$ for all $i$. 
If, in addition, $\mathsf{p}$ is fixed by the action of $\fS_n$
on $\RR[\cV_n]$, Gatermann and Parrilo~\cite{GatermannParrilo} showed how to find simpler, symmetry-reduced, 
sos expressions for $\mathsf{p}$ by exploiting the isotypic decomposition of $\RR[\cV_n]_{\leq d}$ 
under the action of $\fS_n$. In this section, we first introduce preliminary definitions and
describe the structure of these symmetry-reduced sos expressions of Gatermann and Parrilo (Theorem~\ref{thm:GP}). 
To search for these sos expressions via semidefinite programming requires knowledge of bases 
for certain subspaces $W_{\tau_{\bm{\lambda}}}$ of $\RR[\cV_n]_{\leq d}$ (defined in Section~\ref{sec:prelim}). 
In Section~\ref{sec:iso-support}, we show that the number of partitions $\bm{\lambda}$ of $n$ for which these subspaces 
are non-zero is bounded by a quantity that depends on $d$ but is independent of $n$ (Corollary~\ref{cor:isodecomp}). 
In Section~\ref{sec:GP via spanning sets}, we introduce the more flexible variant of Theorem~\ref{thm:GP} that does not 
require explicit knowledge of bases for the $W_{\tau_{\bm{\lambda}}}$, but instead allows us to work with 
spanning sets for these spaces.

\subsection{Preliminaries}
\label{sec:prelim}
Recall that the symmetric group $\fS_n$ acts on $\RR[{\mathsf{x}}]_{\leq d}$, the ring of polynomials
of degree at most $d$ in $\binom{n}{2}$ variables, by sending each
variable $\mathsf{x}_{ij}$ to $\fs\cdot \mathsf{x}_{ij} =
\mathsf{x}_{\fs(i)\fs(j)}$ for each $\fs\in \fS_{n}$. This descends to
an action on $\RR[\cV_{n}]_{\leq d} = \RR[{\mathsf{x}}]_{\leq d}/\mathcal{I}_n$ because
$\mathcal{I}_n$ is invariant under the action of $\fS_n$. By extending these actions 
linearly, we can regard $\RR[{\mathsf{x}}]_{\leq d}$ and $\RR[\cV_n]_{\leq d}$ as $\fS_n$-modules. 

We briefly summarize standard facts and terminology related to the representation theory of $\fS_n$ (see, e.g.,~\cite[Chapters 1\& 2]{SaganBook}).
A $\fS_n$-module $U$ is \emph{irreducible} if the only subspaces of $U$ that are invariant under the 
action of $\fS_n$ are $\{0\}$ and $U$. The irreducible $\fS_n$-modules are
indexed by \emph{partitions} of $n$, namely sequences of positive integers
${\bm{\lambda}} = (\lambda_1,\ldots,\lambda_k)$ such that $\lambda_1\geq \cdots
\geq \lambda_k > 0$ and $\lambda_1+\cdots+\lambda_k = n$.  Each $\lambda_i$ is
called a \emph{part}. We use the shorthand $\bm{\lambda} \vdash n$ to indicate
that $\bm{\lambda}$ is a partition of $n$.  We denote the irreducible
$\fS_n$-module indexed by the partition ${\bm{\lambda}} \vdash n$ by
$S_{\bm{\lambda}}$.  Its dimension is denoted by $n_{\bm{\lambda}}$.

Any $\fS_n$-module $V$ has an \emph{isotypic decomposition}
\begin{equation}
	\label{eq:isotypic} 
	V = \bigoplus_{{\bm{\lambda}} \vdash n} V_{\bm{\lambda}}, 
\end{equation}
which expresses $V$ as a direct sum of $\fS_n$-modules $V_{\bm{\lambda}}$, 
called the \emph{isotypic components}. Each $V_{\bm{\lambda}}$ is the span of 
all possible isomorphic copies of the irreducible $\fS_n$-module $S_{\bm{\lambda}}$
in $V$. (We define these notions precisely in Appendix~\ref{app:appendix1}.)

Combinatorial objects, called tableaux, and related subgroups of $\fS_n$, called row groups, play an important role
in the representation theory of $\fS_n$, and appear throughout this paper. 
A partition ${\bm{\lambda}}\vdash n$ has a \emph{shape} (or \emph{Young diagram}) with rows of size $\lambda_1 \geq \cdots \geq \lambda_k$. 
A \emph{tableau} of shape ${\bm{\lambda}}$, denoted $\tau_{\bm{\lambda}}$, is a filling of the $n$ boxes in the diagram of ${\bm{\lambda}}$ with the numbers 
$1,2,\ldots,n$ bijectively. A tableau is \emph{standard} if the numbering increases from left to right along each row and down each column. 
The  number of standard tableaux of shape ${\bm{\lambda}}$ is $n_{\bm{\lambda}}$, the dimension of the 
irreducible $\fS_n$-module $S_{\bm{\lambda}}$~\cite[Theorem~2.5.2]{SaganBook}.  

If $\tau_{\bm{\lambda}}$ is a tableau of shape ${\bm{\lambda}}$, let
$\textup{row}_i(\tau_{\bm{\lambda}})$ denote the set of labels  in the $i$th row of
$\tau_{\bm{\lambda}}$. The \emph{row group} $\fR_{\tau_{\bm{\lambda}}}$ of the tableau
$\tau_{\bm{\lambda}}$ is the subgroup of $\fS_n$ that leaves each
$\textup{row}_i(\tau_{\bm{\lambda}})$ invariant, i.e., 
\[ \fR_{\tau_{\bm{\lambda}}} :=\{ \fs\in \fS_n: \fs \cdot \textup{row}_i(\tau_{\bm{\lambda}})
= \textup{row}_i(\tau_{\bm{\lambda}})\;\;\text{for all $i=1,2,\ldots,k$}\}.\]

If $U$ is a $\fS_n$-module and $\tau_{\bm{\lambda}}$ is a tableau, let $U^{\fR_{\tau_{\bm{\lambda}}}}$ denote the 
subspace of $U$ consisting of points fixed by the row group $\fR_{\tau_{\bm{\lambda}}}$, i.e.,
\[ U^{\fR_{\tau_{{\bm{\lambda}}}}} := \{u\in U:\; \fs \cdot u = u\;\;\text{for all $\fs\in \fR_{\tau_{\bm{\lambda}}}$}\}.\]
The following definition plays a central role in the paper. 
\begin{definition}
	\label{def:W-tau-lambda}
	If $\RR[\cV_n]_{\leq d} = \bigoplus_{{\bm{\lambda}}} V_{\bm{\lambda}}$ is the isotypic decomposition of $\RR[\cV_n]_{\leq d}$, and $\tau_{\bm{\lambda}}$ is a 
	tableau of shape ${\bm{\lambda}}$, define
	\[ W_{\tau_{\bm{\lambda}}} := V_{\bm{\lambda}}^{\fR_{\tau_{\bm{\lambda}}}}\]
	to be the subspace of the isotypic $V_{\bm{\lambda}}$ fixed by the action of the row group $\fR_{\tau_{\bm{\lambda}}}$. 
\end{definition}
Since $V_{\bm{\lambda}}$ is a subspace of $\RR[\cV_n]_{\leq d}$, it 
follows that $W_{\tau_{\bm{\lambda}}}$ is a subspace of $\RR[\cV_n]_{\leq d}^{\fR_{\tau_{\bm{\lambda}}}}$, the subspace of $\RR[\cV_n]_{\leq d}$ that is fixed by 
the action of the row group $\fR_{\tau_{\bm{\lambda}}}$. This simple observation is the main property of
$W_{\tau_{\bm{\lambda}}}$ that we use in subsequent sections of the paper. 

In Lemma~\ref{lem:mult-iso-2} of Appendix~\ref{app:appendix1}, we show that for any choice of tableau $\tau_{\bm{\lambda}}$ of shape $\bm{\lambda}$, there is 
a vector space isomorphism between $W_{\tau_{\bm{\lambda}}}$ and the \emph{multiplicity space} of the irreducible $\fS_n$-module
$S_{\bm{\lambda}}$ in $\RR[\cV_n]_{\leq d}$ (defined in Appendix~\ref{app:appendix1}). As such, $\dim(W_{\tau_{\bm{\lambda}}})$ is the same for 
any tableau $\tau_{\bm{\lambda}}$ of shape $\bm{\lambda}$. We denote this  dimension by $m_{\bm{\lambda}}$, and often 
refer to $m_{\bm{\lambda}}$ as the \emph{multiplicity} of $S_{\bm{\lambda}}$ in $V$. Some of the multiplicities 
$m_{\bm{\lambda}}$ may be zero as we will see in Section~\ref{sec:iso-support}.

\subsection{Symmetry-reduced sos expressions of Gatermann and Parrilo}
\label{sec:GP}
We are now in a position to state a version of the main result of~\cite{GatermannParrilo} tailored to our setting. 
This result tells us the structure of symmetry-reduced sos expressions for $\fS_n$-invariant $d$-sos polynomials, and shows 
that we can search for such sos expressions by solving a SDP of size $\sum_{\bm{\lambda} \vdash n} m_{\bm{\lambda}}$. 

\begin{theorem} \cite{GatermannParrilo} \label{thm:GP}
Suppose $\mathsf{p} \in \mathbb{R}[\cV_n]$ is $\mathfrak{S}_n$-invariant and $d$-sos.
For each partition $\bm{\lambda} \vdash n$, fix a tableau $\tau_{\bm{\lambda}}$ of shape $\bm{\lambda}$ and choose a vector space basis 
$\{ \mathsf{b}_1^{\tau_{\bm{\lambda}}}, \ldots, \mathsf{b}_{m_{\bm{\lambda}}}^{\tau_{\bm{\lambda}}} \}$ for $W_{\tau_{\bm{\lambda}}}$. 
Then for each partition $\bm{\lambda}$ of $n$, there exists a $m_{\bm{\lambda}} \times m_{\bm{\lambda}}$ psd matrix $Q_{\bm{\lambda}}$ such that 
\begin{equation}
	\label{eq:GP} \mathsf{p} = \sum_{\bm{\lambda} \vdash n} \textup{tr}( Q_{\bm{\lambda}} \, Y^{\tau_{\bm{\lambda}}}),
\end{equation}
where $Y^{\tau_{\bm{\lambda}}}_{ij} := \textup{sym}(\mathsf{b}_i^{\tau_{\bm{\lambda}}} \mathsf{b}_j^{\tau_{\bm{\lambda}}})$.
\end{theorem}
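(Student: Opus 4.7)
The plan is to start from a given sos decomposition $\mathsf{p} = \sum_{i} \mathsf{f}_{i}^{2}$ with $\mathsf{f}_i \in \RR[\cV_n]_{\leq d}$ and refine it in two stages. First, I would use the $\fS_n$-invariance of $\mathsf{p}$ to average over the group,
\[
\mathsf{p} \;=\; \frac{1}{|\fS_n|}\sum_{\fs\in\fS_n} \fs\cdot \mathsf{p} \;=\; \frac{1}{|\fS_n|}\sum_{\fs\in\fS_n}\sum_{i} (\fs\cdot\mathsf{f}_{i})^{2},
\]
which preserves the sos property while giving an expression whose summands now transform in a controlled way under the action.

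The second stage is to decompose each $\mathsf{f}_i = \sum_{\bm{\lambda}\vdash n} \mathsf{f}_i^{\bm{\lambda}}$ along the isotypic decomposition of $\RR[\cV_n]_{\leq d}$, expand the square, and exchange summations to obtain
\[
\mathsf{p} \;=\; \sum_{\bm{\lambda},\bm{\mu}\vdash n}\; \frac{1}{|\fS_n|}\sum_{\fs\in\fS_n}\sum_{i} \bigl(\fs\cdot\mathsf{f}_i^{\bm{\lambda}}\bigr)\bigl(\fs\cdot\mathsf{f}_i^{\bm{\mu}}\bigr).
\]
I would then argue that only the diagonal terms $\bm{\lambda}=\bm{\mu}$ survive. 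This is a Schur's lemma argument: the averaged bilinear pairing $(\mathsf{f},\mathsf{g})\mapsto \frac{1}{|\fS_n|}\sum_\fs(\fs\cdot\mathsf{f})(\fs\cdot\mathsf{g})$, restricted to $V_{\bm{\lambda}}\times V_{\bm{\mu}}$, is $\fS_n$-equivariant and must vanish when $\bm{\lambda}\neq\bm{\mu}$, since distinct irreducibles admit no nonzero equivariant maps between them.

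For each surviving block $\bm{\lambda}$, the final step is to rewrite the contribution in the trace form claimed. Using the identification $V_{\bm{\lambda}}\cong S_{\bm{\lambda}}\otimes M_{\bm{\lambda}}$, where $M_{\bm{\lambda}}$ denotes the multiplicity space of $S_{\bm{\lambda}}$ in $\RR[\cV_n]_{\leq d}$, I would expand each $\mathsf{f}_i^{\bm{\lambda}}$ in a tensor basis $\{s_j\otimes m_k\}$. The crucial ingredient, which the paper establishes in the appendix, is the canonical vector-space isomorphism $W_{\tau_{\bm{\lambda}}}\cong M_{\bm{\lambda}}$; this lets the chosen basis $\{\mathsf{b}_k^{\tau_{\bm{\lambda}}}\}$ of $W_{\tau_{\bm{\lambda}}}$ serve as a basis for the multiplicity space, regardless of which tableau $\tau_{\bm{\lambda}}$ was chosen. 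After averaging, Schur orthogonality collapses the sum over the $S_{\bm{\lambda}}$-factor, producing exactly $\textup{tr}(Q_{\bm{\lambda}}\,Y^{\tau_{\bm{\lambda}}})$, where $Q_{\bm{\lambda}}$ is a sum (over $i$) of outer products of the multiplicity-space coordinate vectors of the $\mathsf{f}_i^{\bm{\lambda}}$ and is therefore psd. The main obstacle is this last step: carefully tracking how $\fS_n$-averaging of the product $\mathsf{f}_i^{\bm{\lambda}}\cdot\mathsf{f}_i^{\bm{\lambda}}$ in $\RR[\cV_n]$ interacts with the tensor product structure on $V_{\bm{\lambda}}$. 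The virtue of the row-group-fixed formulation is that no explicit symmetry-adapted basis of $V_{\bm{\lambda}}$ need be produced; all representation-theoretic bookkeeping is isolated in the isomorphism $W_{\tau_{\bm{\lambda}}}\cong M_{\bm{\lambda}}$.
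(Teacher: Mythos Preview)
Your proposal is correct and follows essentially the same route as the paper's proof in Appendix~\ref{app:appendix1}: average over $\fS_n$, split along the isotypic decomposition, kill the cross terms $\bm{\lambda}\neq\bm{\mu}$ via Schur's lemma (the paper's Lemma~\ref{lem:sym-prod-zero}), and then analyze $\sym(\mathsf{f}_{\bm{\lambda}}^2)$ using the multiplicity-space structure together with the isomorphism $W_{\tau_{\bm{\lambda}}}\cong \textup{Hom}_{\fS_n}(S_{\bm{\lambda}},V)$ (the paper's Lemma~\ref{lem:mult-iso-2}). The only cosmetic difference is that where you invoke ``Schur orthogonality collapses the $S_{\bm{\lambda}}$-factor,'' the paper phrases the same computation via the invariant bilinear form $B$ on $S_{\bm{\lambda}}$ (Lemmas~\ref{lem:sym-tensor} and~\ref{lem:sym-prod-iso}), which makes the positive semidefiniteness of $Q_{\bm{\lambda}}$ transparent as the Gram matrix $Q_{ij}=B(u_i,u_j)/B(s_{\bm{\lambda}},s_{\bm{\lambda}})$; your outer-product description is equivalent once one fixes a $B$-orthonormal basis of $S_{\bm{\lambda}}$.
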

We provide a full proof of Theorem~\ref{thm:GP} and the background needed in Appendix~\ref{app:appendix1}. Here, we 
make various comments about the statement of this theorem. 

Notice that the matrix $Y^{\tau_{\bm{\lambda}}}$ in the statement of Theorem~\ref{thm:GP} is filled with 
polynomials in ${\mathsf{x}}$, and that it is psd for all evaluations of ${\mathsf{x}}$ on $\cV_n$. Hence 
the right-hand side of~\eqref{eq:GP} is clearly non-negative as a function on $\cV_n$. The content of Theorem~\ref{thm:GP}
is that all $d$-sos $\fS_n$-invariant polynomials have certificates of non-negativity of this form. 
Our statement of Theorem~\ref{thm:GP} does not require a full symmetry-adapted basis (see~\cite{FaesslerStiefelBook}) for $V = \RR[\cV_n]_{\leq d}$. 
Instead it requires, for each partition $\bm{\lambda}$, a basis for $W_{\tau_{\bm{\lambda}}}$ 
for a single fixed tableau of shape $\bm{\lambda}$. That is why we have a SDP of size $\sum_{\bm{\lambda} \vdash n} m_{\bm{\lambda}}$
rather than $\dim(V) = \sum_{\bm{\lambda}\vdash n} m_{\bm{\lambda}} n_{\bm{\lambda}}$. 

We also note that the original Gatermann-Parrilo method is about sos certificates for globally non-negative invariant polynomials, 
but it is easily adapted to the situation of non-negativity over an algebraic variety. In addition,~\cite{GatermannParrilo} 
gives a more refined symmetry-reduced sos result in terms of invariant theory.

In the next two subsections, we will see two important improvements to the statement of Theorem~\ref{thm:GP} that will  eventually 
establish one of our main results that, whenever $\mathsf{p}$ is symmetric and $d$-sos, then $\mathsf{p}$ has a succinct 
sos expression whose size is independent of $n$.

\subsection{Partitions needed in the Gatermann-Parrilo sos}
\label{sec:iso-support}
Recall that the vector space $W_{\tau_{\bm{\lambda}}}$ has dimension $m_{\bm{\lambda}}$ and contributes the sos 
expression $\textup{tr}(Q_{\bm{\lambda}} \, Y^{\tau_{\bm{\lambda}}})$ to the sos certificate of $\mathsf{p}$ in \eqref{eq:GP} where 
$Q_{\bm{\lambda}}$ has size $m_{\bm{\lambda}} \times m_{\bm{\lambda}}$. We now investigate which multiplicities $m_{\bm{\lambda}}$ are non-zero, or equivalently, which partitions are needed 
in the Gatermann-Parrilo symmetry-reduced sos expression. 
We show that the number of partitions that are needed in \eqref{eq:GP} depends on $d$ but is 
independent of $n$. 

We begin by 
investigating which partitions ${\bm{\lambda}} \vdash n$ 
can appear in the isotypic decomposition of $\RR[{\mathsf{x}}]_{\leq d}$.
Our main tool is the following consequence of Young's rule, established in~\cite[Theorem 4.9]{RST}. In what follows, we write
$\bm{\lambda}\geq _{\textup{lex}} \bm{\mu}$ for $\bm{\lambda},\bm{\mu} \vdash n$ if $\bm{\lambda}$ is lexicographically greater than or equal to $\bm{\mu}$, and $\bm{\lambda} \unrhd \bm{\mu}$
if $\bm{\lambda}\geq _{\textup{lex}}\bm{\mu}$ and, in addition, $\bm{\lambda}$ has at most as many parts as $\bm{\mu}$. 

\begin{lemma}
	\label{lemma49}
	If $V$ is a finite-dimensional $\fS_n$-module and $\tau_{\bm{\mu}}$ is a tableau of shape ${\bm{\mu}}$, then 
	\[ V^{\fR_{\tau_{\bm{\mu}}}}\subseteq \bigoplus_{{\bm{\lambda}} \unrhd {\bm{\mu}}} V_{\bm{\lambda}}.\]
\end{lemma}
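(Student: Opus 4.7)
The plan is to reduce the claim to individual irreducibles via the isotypic decomposition and then apply Frobenius reciprocity together with Young's rule. First I would observe that taking $\fR_{\tau_{\bm{\mu}}}$-fixed vectors is additive on direct sums, so~\eqref{eq:isotypic} gives
\[ V^{\fR_{\tau_{\bm{\mu}}}} \;=\; \bigoplus_{\bm{\lambda}\vdash n} V_{\bm{\lambda}}^{\fR_{\tau_{\bm{\mu}}}}. \]
Since each $V_{\bm{\lambda}}$ is a direct sum of copies of the single irreducible $S_{\bm{\lambda}}$, the subspace $V_{\bm{\lambda}}^{\fR_{\tau_{\bm{\mu}}}}$ vanishes if and only if $S_{\bm{\lambda}}^{\fR_{\tau_{\bm{\mu}}}} = 0$. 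Hence it suffices to prove that $S_{\bm{\lambda}}^{\fR_{\tau_{\bm{\mu}}}} = 0$ whenever $\bm{\lambda}\not\unrhd\bm{\mu}$.

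The central step is to identify $\dim S_{\bm{\lambda}}^{\fR_{\tau_{\bm{\mu}}}}$ with a Kostka number. By construction, $\fR_{\tau_{\bm{\mu}}}$ is exactly the Young subgroup stabilising the partition of $[n]$ into the rows of $\tau_{\bm{\mu}}$, whose block sizes are $\mu_1,\ldots,\mu_k$. Consequently the induced module $\mathrm{Ind}_{\fR_{\tau_{\bm{\mu}}}}^{\fS_n}(\mathbf{1})$ is the permutation module $M^{\bm{\mu}}$, and applying Frobenius reciprocity gives
\[ S_{\bm{\lambda}}^{\fR_{\tau_{\bm{\mu}}}} \;\cong\; \mathrm{Hom}_{\fR_{\tau_{\bm{\mu}}}}\!\bigl(\mathbf{1},\,S_{\bm{\lambda}}\bigr) \;\cong\; \mathrm{Hom}_{\fS_n}\!\bigl(M^{\bm{\mu}},\,S_{\bm{\lambda}}\bigr). \]
Young's rule then says that the multiplicity of $S_{\bm{\lambda}}$ in $M^{\bm{\mu}}$ is the Kostka number $K_{\bm{\lambda}\bm{\mu}}$, so $\dim S_{\bm{\lambda}}^{\fR_{\tau_{\bm{\mu}}}} = K_{\bm{\lambda}\bm{\mu}}$.

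Finally I would invoke the standard combinatorial fact that $K_{\bm{\lambda}\bm{\mu}}>0$ if and only if $\bm{\lambda}$ dominates $\bm{\mu}$ (i.e.\ $\sum_{i\leq j}\lambda_i \geq \sum_{i\leq j}\mu_i$ for all $j$). Dominance immediately forces $\bm{\lambda}\geq_{\mathrm{lex}}\bm{\mu}$, and a short argument comparing partial sums (using that both partitions sum to $n$) shows that dominance also forces $\bm{\lambda}$ to have at most as many parts as $\bm{\mu}$. These are together the relation $\bm{\lambda}\unrhd\bm{\mu}$ defined in the paper, so $K_{\bm{\lambda}\bm{\mu}}=0$ whenever $\bm{\lambda}\not\unrhd\bm{\mu}$, yielding $V_{\bm{\lambda}}^{\fR_{\tau_{\bm{\mu}}}}=0$ for such $\bm{\lambda}$ and completing the containment.

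There is no genuine obstacle here: the proof is essentially a bookkeeping exercise once one recognises the row group as a Young subgroup and combines Frobenius reciprocity with Young's rule. The only mild subtlety is translating the dominance characterisation of Kostka positivity into the weaker order $\unrhd$ actually used in the lemma statement, which is a direct consequence of the definition of dominance.
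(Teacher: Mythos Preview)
Your proposal is correct and follows exactly the route the paper itself signals: the paper does not prove this lemma in-text but cites it as ``a consequence of Young's rule'' from~\cite[Theorem~4.9]{RST}, and your argument is precisely the standard derivation---identify the row group as a Young subgroup, apply Frobenius reciprocity to get $\dim S_{\bm{\lambda}}^{\fR_{\tau_{\bm{\mu}}}}=K_{\bm{\lambda}\bm{\mu}}$, and use the dominance characterisation of Kostka positivity. The only point worth flagging is that the paper's relation $\unrhd$ (lex order plus ``no more parts'') is strictly weaker than dominance, so your final translation step is genuinely needed and is handled correctly.
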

Recall that a {\em hook partition} of $n$ is one in which all parts except the first has size one, which we denote 
as $(\lambda_1, 1^{n-\lambda_1})$ where $\lambda_1$ is the size of the first part, and where the remaining $n-\lambda_1$ parts 
constitute the \emph{tail} of the hook.
We now use Lemma~\ref{lemma49} to show that only partitions lexicographically at 
least as large as the hook partition $(n-2d,1^{2d})$ can appear in the isotypic decomposition of $V=\RR[{\mathsf{x}}]_{\leq d}$. 
In Proposition~\ref{prop:total number of m lambdas}, we will see that the number of such partitions is bounded by a function of $d$ that is independent of $n$.
\begin{theorem}
	\label{thm:isodecomp}
	The multiplicity $m_{\bm{\lambda}}$ of $S_{\bm{\lambda}}$ in the decomposition of 
	$V=\mathbb{R}[{\mathsf{x}}]_{\leq d}$ into irreducible $\fS_n$-modules is zero unless 
	${\bm{\lambda}}\geq_{\textup{lex}}(n-2d,1^{2d})$, i.e.,  
	 \[ V = \bigoplus_{{\bm{\lambda}} \geq_{\textup{lex}} (n-2d,1^{2d})} V_{\bm{\lambda}}.\]  
\end{theorem}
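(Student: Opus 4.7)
\smallskip

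The plan is to apply Lemma~\ref{lemma49} with $\bm{\mu} = (n-2d, 1^{2d})$. Specifically, I would show that $V = \mathbb{R}[{\mathsf{x}}]_{\leq d}$ is spanned by elements each of which lies in $V^{\fR_{\tau_{\bm{\mu}}}}$ for some (monomial-dependent) tableau $\tau_{\bm{\mu}}$ of shape $\bm{\mu}$. Since Lemma~\ref{lemma49} then gives $V \subseteq \bigoplus_{\bm{\lambda} \unrhd \bm{\mu}} V_{\bm{\lambda}}$, the uniqueness of the isotypic decomposition forces $V_{\bm{\lambda}} = 0$ for all other $\bm{\lambda}$. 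In particular $m_{\bm{\lambda}} = 0$ unless $\bm{\lambda} \unrhd \bm{\mu}$, which is a strictly stronger condition than $\bm{\lambda} \geq_{\textup{lex}} (n-2d, 1^{2d})$, giving the theorem.

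The core observation is combinatorial: any monomial $m = \mathsf{x}_{i_1 j_1} \cdots \mathsf{x}_{i_k j_k}$ of degree $k \leq d$ involves a set $I(m) \subseteq [n]$ of at most $2k \leq 2d$ indices. Since $|[n] \setminus I(m)| \geq n - 2d$, I can build a tableau $\tau_{\bm{\mu}}(m)$ of shape $\bm{\mu} = (n-2d, 1^{2d})$ whose first row consists of $n-2d$ chosen elements of $[n] \setminus I(m)$, with the remaining $2d$ indices of $[n]$ placed arbitrarily in the singleton rows. The row group $\fR_{\tau_{\bm{\mu}}(m)}$ is the symmetric group on the entries of the first row, all of which avoid $I(m)$; hence every $\fs \in \fR_{\tau_{\bm{\mu}}(m)}$ sends each variable $\mathsf{x}_{i_\ell j_\ell}$ (for $\ell = 1, \ldots, k$) to itself, so $\fs \cdot m = m$. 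Thus $m \in V^{\fR_{\tau_{\bm{\mu}}(m)}}$.

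To finish, I would assemble this: the monomials of degree at most $d$ span $V$, each such monomial belongs to $V^{\fR_{\tau_{\bm{\mu}}}}$ for some tableau $\tau_{\bm{\mu}}$ of the single fixed shape $\bm{\mu} = (n-2d, 1^{2d})$, and Lemma~\ref{lemma49} applied to each such tableau places every generator inside $\bigoplus_{\bm{\lambda} \unrhd \bm{\mu}} V_{\bm{\lambda}}$. Taking the span yields the claimed inclusion on $V$, and hence $V_{\bm{\lambda}} = 0$ for $\bm{\lambda} \not\geq_{\textup{lex}} (n-2d, 1^{2d})$. There is no real obstacle beyond bookkeeping: the only subtle point is noting that different monomials may require different tableaux of the same shape $\bm{\mu}$, but Lemma~\ref{lemma49} applies uniformly to every tableau of this shape (indeed, all row groups of shape $\bm{\mu}$ are conjugate, so the target $\bigoplus_{\bm{\lambda} \unrhd \bm{\mu}} V_{\bm{\lambda}}$ is the same in every case).
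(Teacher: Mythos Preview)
Your proposal is correct and follows essentially the same approach as the paper: both arguments pick, for each monomial of degree at most $d$, a tableau of hook shape $(n-2d,1^{2d})$ whose tail contains the at most $2d$ indices appearing in that monomial, observe that the row group then fixes the monomial, and invoke Lemma~\ref{lemma49}. The only cosmetic difference is that the paper goes on to verify that $\bm{\lambda} \unrhd (n-2d,1^{2d})$ and $\bm{\lambda} \geq_{\textup{lex}} (n-2d,1^{2d})$ are in fact equivalent for the hook, whereas you simply use the implication $\unrhd \Rightarrow \geq_{\textup{lex}}$, which already suffices for the stated conclusion.
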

\begin{proof}
	Let $\mathsf{x}_{i_1j_1}^{\alpha_1}\mathsf{x}_{i_2j_2}^{\alpha_2}\cdots \mathsf{x}_{i_kj_k}^{\alpha_k}$ be a monomial of degree $\sum_{i=1}^{k}\alpha_i\leq d$. 
	The set $L = \{i_1,j_1,i_2,j_2,\ldots,i_k,j_k\}\subseteq [n]$ of indices appearing in the monomial
	has size at most $2d$. Let $\tau$ be a tableau of shape $(n-2d,1^{2d})$
	with the elements of $L$ put into the tail and the remaining $n - |L|$ labels filled arbitrarily into the rest of the Young diagram of $\tau$. Since $\fR_{\tau}$ fixes every element of $L$, clearly 
	$\mathsf{x}_{i_1j_1}^{\alpha_1}\mathsf{x}_{i_2j_2}^{\alpha_2}\cdots \mathsf{x}_{i_kj_k}^{\alpha_k}\in V^{\fR_\tau}$.
	Since $V$ is spanned by monomials of degree at most $d$, we have that 
	\[ V \subseteq \sum_{\textup{shape}(\tau)=(n-2d,1^{2d})} V^{\fR_{\tau}} \subseteq \bigoplus_{{\bm{\lambda}} \unrhd (n-2d,1^{2d})} V_{\bm{\lambda}},\]
	where the second inclusion follows from Lemma~\ref{lemma49}.
	 To finish the proof, we need only show that 
	for the hook shape $(n-2d,1^{2d})$, we have ${\bm{\lambda}} \unrhd (n-2d,1^{2d})$ if and only if ${\bm{\lambda}} \geq_{\textup{lex}} (n-2d,1^{2d})$. 
	It is enough to argue that if ${\bm{\lambda}} \geq_{\textup{lex}} (n-2d,1^{2d})$, then ${\bm{\lambda}}$ has at most as many parts as $(n-2d,1^{2d})$, 
	i.e.,\ ${\bm{\lambda}}$ has at most $2d+1$ parts. This is clearly the case since ${\bm{\lambda}} \vdash n$ and $\lambda_1 \geq n-2d$. 
\end{proof}

By the same argument (but only considering square-free monomials), the same conclusion holds for the isotypic 
decomposition of $V = \RR[\cV_n]_{\leq d}$.

\begin{corollary} \label{cor:isodecomp}
	The multiplicity $m_{\bm{\lambda}}$ of $S_{\bm{\lambda}}$ in the decomposition of 
	$V=\mathbb{R}[\cV_n]_{\leq d}$ into irreducible $\fS_n$-modules or, equivalently, the dimension of $W_{\tau_{\bm{\lambda}}}$ for 
	any tableau of shape $\bm{\lambda}$,  is zero unless 
	${\bm{\lambda}}\geq_{\textup{lex}}(n-2d,1^{2d})$, i.e.,  
	 \[ V = \bigoplus_{{\bm{\lambda}} \geq_{\textup{lex}} (n-2d,1^{2d})} V_{\bm{\lambda}}.\]  
\end{corollary}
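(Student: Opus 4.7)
The plan is to repeat the proof of Theorem~\ref{thm:isodecomp} almost verbatim, noting that the only feature of $\RR[{\mathsf{x}}]_{\leq d}$ used there is that it is spanned by monomials involving at most $2d$ of the indices in $[n]$. Under the identification of $V = \RR[\cV_n]_{\leq d}$ with the square-free polynomials of degree at most $d$, it is spanned by square-free monomials $\mathsf{x}_{i_1 j_1} \cdots \mathsf{x}_{i_k j_k}$ with $k \leq d$. Each such monomial involves a set $L = \{i_1, j_1, \ldots, i_k, j_k\} \subseteq [n]$ of size at most $2k \leq 2d$, so the same bookkeeping applies.

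Given such a square-free monomial, I would pick a tableau $\tau$ of hook shape $(n-2d, 1^{2d})$ with the elements of $L$ placed in the tail of length $2d$ and the remaining $n - |L|$ labels filled arbitrarily into the first row. Every element of $L$ sits in a singleton row of $\tau$, so the row group $\fR_\tau$ fixes $L$ pointwise and therefore fixes the monomial; hence the monomial lies in $V^{\fR_\tau}$. Summing over tableaux of this hook shape and then applying Lemma~\ref{lemma49} gives
\[ V \;\subseteq\; \sum_{\textup{shape}(\tau) = (n-2d,\, 1^{2d})} V^{\fR_\tau} \;\subseteq\; \bigoplus_{\bm{\lambda} \unrhd (n-2d, 1^{2d})} V_{\bm{\lambda}}. \]
As in Theorem~\ref{thm:isodecomp}, for the specific hook shape $(n-2d, 1^{2d})$ the condition $\bm{\lambda} \unrhd (n-2d, 1^{2d})$ is equivalent to $\bm{\lambda} \geq_{\textup{lex}} (n-2d, 1^{2d})$, because any partition of $n$ with $\lambda_1 \geq n-2d$ automatically has at most $2d+1$ parts.

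There is no real obstacle here: everything is a direct adaptation of Theorem~\ref{thm:isodecomp}, and the only thing to check is that square-free monomials of degree at most $d$ span $V$, which is immediate from the identification of $\RR[\cV_n]_{\leq d}$ with square-free polynomials. The equivalent formulation in terms of $W_{\tau_{\bm{\lambda}}}$ then follows from the isomorphism $W_{\tau_{\bm{\lambda}}} \cong $ (multiplicity space of $S_{\bm{\lambda}}$) established in Lemma~\ref{lem:mult-iso-2} of Appendix~\ref{app:appendix1}, which gives $\dim W_{\tau_{\bm{\lambda}}} = m_{\bm{\lambda}}$. A slicker but essentially equivalent route would be to observe that the quotient map $\RR[{\mathsf{x}}]_{\leq d} \twoheadrightarrow \RR[\cV_n]_{\leq d}$ is a surjective $\fS_n$-equivariant morphism, so every irreducible appearing in the isotypic decomposition of the target already appears in the source, and one invokes Theorem~\ref{thm:isodecomp} directly.
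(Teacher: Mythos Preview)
Your proof is correct and follows exactly the paper's approach, which simply remarks that the argument of Theorem~\ref{thm:isodecomp} goes through verbatim once one restricts to square-free monomials spanning $\RR[\cV_n]_{\leq d}$. One tiny wording slip: when $|L| < 2d$, not all of the remaining $n-|L|$ labels fit into the first row of the hook (which has only $n-2d$ boxes), so some must go into unused tail positions---but since the only thing that matters is that every element of $L$ lands in a singleton row and is therefore fixed by $\fR_\tau$, this does not affect the argument.
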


For the rest of the body of the paper, we will fix 
\begin{equation*} \label{eq:big-lambda}
\Lambda := \{ \bm{\lambda} \vdash n \,:\, {\bm{\lambda}} \geq_{\textup{lex}} (n-2d,1^{2d})\}.
\end{equation*}
Then, using Theorem~\ref{thm:isodecomp}, we obtain the following corollary to Theorem~\ref{thm:GP}.

\begin{corollary} \label{cor:GP with big lambda}
Suppose $\mathsf{p} \in \mathbb{R}[\cV_n]$ is $\mathfrak{S}_n$-invariant and $d$-sos.
For each partition $\bm{\lambda} \vdash n$, fix a tableau $\tau_{\bm{\lambda}}$ of shape $\bm{\lambda}$ and choose a vector space basis 
$\{ \mathsf{b}_1^{\tau_{\bm{\lambda}}}, \ldots, \mathsf{b}_{m_{\bm{\lambda}}}^{\tau_{\bm{\lambda}}} \}$ for $W_{\tau_{\bm{\lambda}}}$.
Then for each partition $\bm{\lambda} \in \Lambda$,  there exists a $m_{\bm{\lambda}} \times m_{\bm{\lambda}}$ psd matrix $Q_{\bm{\lambda}}$ such that 
$$ \mathsf{p} = \sum_{\bm{\lambda} \in \Lambda} \textup{tr}( Q_{\bm{\lambda}} \, Y^{\tau_{\bm{\lambda}}}).$$
\end{corollary}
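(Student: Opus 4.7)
The plan is to derive this statement as a bookkeeping consequence of Theorem~\ref{thm:GP} combined with the multiplicity vanishing established in Corollary~\ref{cor:isodecomp}. Theorem~\ref{thm:GP} already produces a decomposition $\mathsf{p} = \sum_{\bm{\lambda} \vdash n} \textup{tr}(Q_{\bm{\lambda}}\, Y^{\tau_{\bm{\lambda}}})$ indexed over every partition of $n$, together with psd matrices $Q_{\bm{\lambda}}$ of size $m_{\bm{\lambda}} \times m_{\bm{\lambda}}$. Since the statement to be proved uses the same tableaux and bases, the only task left is to argue that partitions outside $\Lambda$ contribute nothing.

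The key observation is that for any $\bm{\lambda} \vdash n$ with $\bm{\lambda} \notin \Lambda$, we have $m_{\bm{\lambda}} = 0$ by Corollary~\ref{cor:isodecomp}, and hence $\dim W_{\tau_{\bm{\lambda}}} = 0$ via Definition~\ref{def:W-tau-lambda}. For such $\bm{\lambda}$ the prescribed basis of $W_{\tau_{\bm{\lambda}}}$ is empty, the matrices $Y^{\tau_{\bm{\lambda}}}$ and $Q_{\bm{\lambda}}$ are both empty $0 \times 0$ matrices, and the summand $\textup{tr}(Q_{\bm{\lambda}}\, Y^{\tau_{\bm{\lambda}}})$ vanishes identically as a polynomial. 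Dropping these zero summands from the sum provided by Theorem~\ref{thm:GP} yields the claimed expression indexed over $\Lambda$, and the matrices $Q_{\bm{\lambda}}$ for $\bm{\lambda} \in \Lambda$ inherit psd-ness directly from Theorem~\ref{thm:GP}.

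I do not anticipate any substantive obstacle here, as the proof is essentially a restatement of Theorem~\ref{thm:GP} after truncating the index set using Corollary~\ref{cor:isodecomp}. The only mild care needed is to observe that the empty-matrix convention is consistent with the statement, i.e.\ that one may equivalently write the sum either over all $\bm{\lambda} \vdash n$ or over $\bm{\lambda} \in \Lambda$ without changing the polynomial identity.
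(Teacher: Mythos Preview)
Your proposal is correct and matches the paper's approach exactly: the paper simply states that this corollary follows from Theorem~\ref{thm:GP} together with Theorem~\ref{thm:isodecomp} (equivalently Corollary~\ref{cor:isodecomp}), without writing out a separate proof. Your observation that the summands for $\bm{\lambda}\notin\Lambda$ vanish because $m_{\bm{\lambda}}=0$ is precisely the intended reasoning.
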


We now give an explicit upper bound on the number of partitions that are lexicographically
greater than or equal to the hook $(n-2d,1^{2d})$, and hence a bound on the number of partitions $\bm{\lambda}$
that appear in the isotypic decomposition of $\RR[\cV_n]_{\leq d}$. 
\begin{proposition} \label{prop:total number of m lambdas}
	Let $m_{\bm{\lambda}}$ be the multiplicity of $S_{\bm{\lambda}}$ in the isotypic decomposition of 
	$V = \RR[\cV_n]_{\leq d}$. Then the number of partitions $\bm{\lambda}$ of $n$ such that $m_{\bm{\lambda}}$ is non-zero
	is bounded above by $p(0) + p(1) + p(2)+\cdots + p(2d)$, where $p(i)$ is the number of partitions of $i$. In particular, 
	the size of $\Lambda$ is independent of $n$.
\end{proposition}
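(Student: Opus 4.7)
The plan is to use Corollary~\ref{cor:isodecomp} as a black box and then perform a direct combinatorial count. By that corollary, $m_{\bm{\lambda}} \neq 0$ forces $\bm{\lambda} \geq_{\textup{lex}} (n-2d, 1^{2d})$, so it suffices to bound the size of $\Lambda$.

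The key observation I would make first is that the lexicographic condition $\bm{\lambda} \geq_{\textup{lex}} (n-2d, 1^{2d})$ is equivalent to the simpler condition $\lambda_1 \geq n-2d$. Indeed, if $\lambda_1 > n-2d$ then $\bm{\lambda} >_{\textup{lex}} (n-2d, 1^{2d})$ automatically, while if $\lambda_1 = n-2d$ then $(\lambda_2, \lambda_3, \ldots) \vdash 2d$, and every partition of $2d$ is lexicographically at least $(1^{2d})$ (this is immediate because $(1^{2d})$ is the lex-minimum partition of $2d$). The reverse implication is also clear, since $\lambda_1 < n-2d$ would prevent the lex inequality from holding.

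Next, I would parametrize the elements of $\Lambda$ by $j := n - \lambda_1 \in \{0, 1, \ldots, 2d\}$. For each such $j$, the partition $\bm{\lambda}$ is determined by $\lambda_1 = n-j$ together with the \emph{tail} $(\lambda_2, \lambda_3, \ldots)$, which is a partition of $j$ subject to the constraint $\lambda_2 \leq \lambda_1 = n-j$. Since the tail is a partition of $j$, its largest part is at most $j$, so the constraint $\lambda_2 \leq n-j$ is vacuous whenever $j \leq n-j$, that is, whenever $n \geq 2j$; for $j \leq 2d$ this is ensured by $n \geq 4d$. Thus for $n \geq 4d$ the tails range over all partitions of $j$, giving exactly $p(j)$ choices, while for smaller $n$ the number of tails is at most $p(j)$. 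Summing over $j$ gives
\[
|\Lambda| \;\leq\; \sum_{j=0}^{2d} p(j),
\]
which is a quantity depending only on $d$.

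I do not anticipate a substantial obstacle: the argument is essentially combinatorial bookkeeping once the lex condition is unpacked, and no further representation theory is required beyond invoking Corollary~\ref{cor:isodecomp}. The only minor care needed is verifying that the constraint $\lambda_2 \leq \lambda_1$ does not accidentally make the inequality strict, which is handled by the trivial check $j \leq n-j$ for $j \leq 2d$ and $n$ sufficiently large, with the bound holding automatically in all other cases since one only drops partitions from the count.
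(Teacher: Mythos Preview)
Your proposal is correct and follows essentially the same approach as the paper: invoke Corollary~\ref{cor:isodecomp}, parametrize $\bm{\lambda}\in\Lambda$ by the value of $\lambda_1$ (the paper writes $\lambda_1=n-2d+i$ with $0\leq i\leq 2d$, which is your $j=2d-i$), and bound the number of tails by the partition count of the remainder. Your extra remarks---that the lex condition reduces to $\lambda_1\geq n-2d$ and that the constraint $\lambda_2\leq\lambda_1$ is harmless---are correct refinements that the paper leaves implicit in the phrase ``bounded above by.''
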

\begin{proof}
	By Corollary~\ref{cor:isodecomp}, the number of partitions $\bm{\lambda}$ such that $m_{\bm{\lambda}}$ is 
	non-zero is bounded above by $|\Lambda|$.
	If $\bm{\lambda}\in \Lambda$, then it has the form $(n-2d+i,\bm{\mu})$,
where $\bm{\mu}\vdash 2d-i$ and $i$ is some integer satisfying $0\leq i\leq
2d$. The number of such partitions $\bm{\mu}$ is bounded above by $p(0) +
p(1)+p(2)+\cdots + p(2d)$. 
\end{proof}

By Proposition~\ref{prop:total number of m lambdas}, we see that even though the number of partitions of $n$ grows with $n$, the number of those 
that are needed in the Gatermann-Parrilo symmetry-reduced sos expression for 
a $d$-sos symmetric polynomial $\mathsf{p}$ is bounded by a function of $d$ that is independent of $n$. This is our first improvement
 to the statement of Theorem~\ref{thm:GP} in our setting. 

\subsection{Symmetry-reduction via spanning sets}
\label{sec:GP via spanning sets}

Next, we show that one does not need bases for the $W_{\tau_{\bm{\lambda}}}$ to find a 
sos certificate as in Theorem~\ref{thm:GP}. Instead, it suffices to have a set of polynomials whose span contains $W_{\tau_{\bm{\lambda}}}$. 
This relaxation offers a great deal of flexibility in setting up the SDP in \eqref{eq:GP}.
 
\begin{theorem} \label{thm:GP with spanning set}
Suppose $\mathsf{p}\in \RR[\cV_n]$ is $\mathfrak{S}_n$-invariant and $d$-sos.
For each partition $\bm{\lambda}\vdash n$, fix a tableau $\tau_{\bm{\lambda}}$ of shape $\bm{\lambda}$ 
and let $\{\mathsf{p}_1^{\tau_{\bm{\lambda}}}, \ldots, \mathsf{p}_{l_{\tau_{\bm{\lambda}}}}^{\tau_{\bm{\lambda}}}\}$ 
be a set of polynomials whose span contains $W_{\tau_{\bm{\lambda}}}$. 
Then for each partition $\bm{\lambda} \in \Lambda$, there exists a $l_{\tau_{\bm{\lambda}}} \times l_{\tau_{\bm{\lambda}}}$ psd matrix $R_{\bm{\lambda}}$ such that 
$$ \mathsf{p} = \sum_{\bm{\lambda} \in \Lambda} \textup{tr}( R_{\bm{\lambda}} \, Z^{\tau_{\bm{\lambda}}}),$$
where $Z^{\tau_{\bm{\lambda}}}_{ij} := \textup{sym}(\mathsf{p}_i^{\tau_{\bm{\lambda}}} \mathsf{p}_j^{\tau_{\bm{\lambda}}})$.
\end{theorem}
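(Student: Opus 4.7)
The plan is to reduce this to Corollary~\ref{cor:GP with big lambda} via a simple change of basis. Since $\{\mathsf{p}_1^{\tau_{\bm{\lambda}}}, \ldots, \mathsf{p}_{l_{\tau_{\bm{\lambda}}}}^{\tau_{\bm{\lambda}}}\}$ spans a space that contains $W_{\tau_{\bm{\lambda}}}$, we can pick a basis $\{\mathsf{b}_1^{\tau_{\bm{\lambda}}}, \ldots, \mathsf{b}_{m_{\bm{\lambda}}}^{\tau_{\bm{\lambda}}}\}$ for $W_{\tau_{\bm{\lambda}}}$ and write each basis element as an explicit linear combination $\mathsf{b}_i^{\tau_{\bm{\lambda}}} = \sum_{k=1}^{l_{\tau_{\bm{\lambda}}}} (A_{\bm{\lambda}})_{ik}\, \mathsf{p}_k^{\tau_{\bm{\lambda}}}$. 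This produces a $m_{\bm{\lambda}} \times l_{\tau_{\bm{\lambda}}}$ coefficient matrix $A_{\bm{\lambda}}$ that records the change of spanning sets.

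Next, I would apply Corollary~\ref{cor:GP with big lambda} to obtain psd matrices $Q_{\bm{\lambda}}$ of size $m_{\bm{\lambda}} \times m_{\bm{\lambda}}$ with
\[ \mathsf{p} = \sum_{\bm{\lambda} \in \Lambda} \textup{tr}(Q_{\bm{\lambda}}\, Y^{\tau_{\bm{\lambda}}}), \qquad Y^{\tau_{\bm{\lambda}}}_{ij} = \textup{sym}(\mathsf{b}_i^{\tau_{\bm{\lambda}}} \mathsf{b}_j^{\tau_{\bm{\lambda}}}). \]
Expanding $\mathsf{b}_i^{\tau_{\bm{\lambda}}} \mathsf{b}_j^{\tau_{\bm{\lambda}}} = \sum_{k,l} (A_{\bm{\lambda}})_{ik} (A_{\bm{\lambda}})_{jl}\, \mathsf{p}_k^{\tau_{\bm{\lambda}}} \mathsf{p}_l^{\tau_{\bm{\lambda}}}$ and using linearity of $\textup{sym}(\cdot)$, the entries relate by $Y^{\tau_{\bm{\lambda}}}_{ij} = \sum_{k,l} (A_{\bm{\lambda}})_{ik} (A_{\bm{\lambda}})_{jl}\, Z^{\tau_{\bm{\lambda}}}_{kl}$, which in matrix form reads $Y^{\tau_{\bm{\lambda}}} = A_{\bm{\lambda}}\, Z^{\tau_{\bm{\lambda}}}\, A_{\bm{\lambda}}^{\top}$.

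The final step is a direct trace manipulation. I would set $R_{\bm{\lambda}} := A_{\bm{\lambda}}^{\top} Q_{\bm{\lambda}} A_{\bm{\lambda}}$, which is a $l_{\tau_{\bm{\lambda}}} \times l_{\tau_{\bm{\lambda}}}$ matrix and is psd because $Q_{\bm{\lambda}}$ is psd (conjugation by $A_{\bm{\lambda}}$ preserves positive semidefiniteness). Using the cyclic property of the trace,
\[ \textup{tr}(Q_{\bm{\lambda}}\, Y^{\tau_{\bm{\lambda}}}) = \textup{tr}(Q_{\bm{\lambda}} A_{\bm{\lambda}} Z^{\tau_{\bm{\lambda}}} A_{\bm{\lambda}}^{\top}) = \textup{tr}(A_{\bm{\lambda}}^{\top} Q_{\bm{\lambda}} A_{\bm{\lambda}}\, Z^{\tau_{\bm{\lambda}}}) = \textup{tr}(R_{\bm{\lambda}}\, Z^{\tau_{\bm{\lambda}}}), \]
so summing over $\bm{\lambda} \in \Lambda$ delivers the claimed expression.

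There is essentially no main obstacle: the content is the linear-algebraic observation that a spanning set suffices because the expression $\textup{tr}(Q\,\textup{sym}(\mathsf{b}\mathsf{b}^{\top}))$ is bilinear in the entries of the vector $\mathsf{b}$, so any linear reparametrization of those entries can be absorbed into the Gram matrix while preserving positive semidefiniteness. The only mild point to check is that $R_{\bm{\lambda}}$ remains psd under the conjugation $A_{\bm{\lambda}}^{\top}(\cdot)A_{\bm{\lambda}}$, which is a standard fact.
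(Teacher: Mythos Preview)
Your argument is correct and essentially identical to the paper's own proof: both invoke Corollary~\ref{cor:GP with big lambda}, express the basis vectors $\mathsf{b}_i^{\tau_{\bm{\lambda}}}$ as linear combinations of the $\mathsf{p}_k^{\tau_{\bm{\lambda}}}$ via a change-of-basis matrix, and define $R_{\bm{\lambda}}$ as the conjugation of $Q_{\bm{\lambda}}$ by that matrix. The only cosmetic differences are that the paper uses a row-vector convention (its $M_{\tau_{\bm{\lambda}}}$ is the transpose of your $A_{\bm{\lambda}}$) and verifies the trace identity by writing out the quadratic form $(\mathsf{b}_1,\ldots,\mathsf{b}_m)Q(\mathsf{b}_1,\ldots,\mathsf{b}_m)^\top$ explicitly rather than citing trace cyclicity.
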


\proof
By Corollary~\ref{cor:GP with big lambda}, we know that under the hypotheses of
the theorem, there exist $m_{\bm{\lambda}}\times m_{\bm{\lambda}}$ real psd
matrices $Q_{\bm{\lambda}}$ such that $ \mathsf{p} = \sum_{\bm{\lambda}
\in \Lambda} \textup{tr}( Q_{\bm{\lambda}} \, Y^{\tau_{\bm{\lambda}}})$. Recall that the matrix
$Y^{\tau_{\bm{\lambda}}}$, defined as $Y^{\tau_{\bm{\lambda}}}_{ij} =
\textup{sym}(\mathsf{b}_i^{\tau_{\bm{\lambda}}}
\mathsf{b}_j^{\tau_{\bm{\lambda}}})$, is constructed from a
basis for $W_{\tau_{{\bm{\lambda}}}}$ after fixing one tableau
$\tau_{\bm{\lambda}}$ of shape $\bm{\lambda}$. Since each $\mathsf{b}_{i}^{\tau_{\bm{\lambda}}}$ is in the span of 
$\mathsf{p}_1^{\tau_{\bm{\lambda}}}, \ldots, \mathsf{p}_{l_{\tau_{\bm{\lambda}}}}^{\tau_{\bm{\lambda}}}$, there is a 
real matrix $M_{\tau_{\bm{\lambda}}}$ of size $l_{\tau_{\bm{\lambda}}} \times m_{\bm{\lambda}}$ such that 
$$(\textsf{p}_1^{\tau_{\bm{\lambda}}}, \ldots, \textsf{p}_{l_{\tau_{\bm{\lambda}}}}^{\tau_{\bm{\lambda}}})M_{\tau_{\bm{\lambda}}} = (\textsf{b}_1^{\tau_{\bm{\lambda}}}, \ldots, \textsf{b}_{m_{\bm{\lambda}}}^{\tau_{\bm{\lambda}}}).$$
Defining $R_{\bm{\lambda}} := M_{\tau_{\bm{\lambda}}}Q_{\bm{\lambda}} M_{\tau_{\bm{\lambda}}}^\top$, we see that 
\begin{align*}
(\textsf{b}_1^{\tau_{\bm{\lambda}}}, \ldots, \textsf{b}_{m_{\bm{\lambda}}}^{\tau_{\bm{\lambda}}}) Q_{\bm{\lambda}} (\textsf{b}_1^{\tau_{\bm{\lambda}}}, \ldots, \textsf{b}_{m_{\bm{\lambda}}}^{\tau_{\bm{\lambda}}})^\top 
= (\textsf{p}_1^{\tau_{\bm{\lambda}}}, \ldots, \textsf{p}_{l_{\tau_{\bm{\lambda}}}}^{\tau_{\bm{\lambda}}})R_{{\bm{\lambda}}}
(\textsf{p}_1^{\tau_{\bm{\lambda}}}, \ldots, \textsf{p}_{l_{\tau_{\bm{\lambda}}}}^{\tau_{\bm{\lambda}}})^\top.
\end{align*}
Symmetrizing both sides we conclude that 
$\textup{tr}(Q_{\bm{\lambda}} Y^{\tau_{\bm{\lambda}}}) = \textup{tr}(R_{{\bm{\lambda}}} \,Z^{\tau_{\bm{\lambda}}})$, and hence, 
\begin{align*}
\textsf{p}  = \sum_{\bm{\lambda} \in \Lambda} \textup{tr}( Q_{\bm{\lambda}} \, Y^{\tau_{\bm{\lambda}}})  
=  \sum_{\bm{\lambda} \in \Lambda} \textup{tr}( R_{\bm{\lambda}} \, Z^{\tau_{\bm{\lambda}}}).
\end{align*}
\qed

Note that the sos in Theorem~\ref{thm:GP with spanning set} retains the same structure as Corollary~\ref{cor:GP with big lambda} in the sense that
for each $\bm{\lambda}\in \Lambda$ we only need to consider $W_{\tau_{\bm{\lambda}}}$ for a single tableau $\tau_{\bm{\lambda}}$ of shape $\bm{\lambda}$.

% !TEX root =  main.tex
\section{Spanning sets and Razborov's flags}
\label{sec:spanning sets}

Recall that our goal is to find succinct sos expressions for $\fS_n$-invariant $d$-sos polynomials.
Theorem~\ref{thm:GP with spanning set} implies that to achieve this, one needs to find, 
for a fixed tableau $\tau_{\bm{\lambda}}$ of shape $\bm{\lambda}\in \Lambda$, 
a spanning set for $W_{\tau_{\bm{\lambda}}}$ with size independent of $n$.
In Section~\ref{sub:spanningsets}, we show how to construct a na\"ive spanning set for $W_{\tau_{\bm{\lambda}}}$ 
with the property that the number of polynomials in it is independent of $n$. From that first spanning set, 
we derive two more spanning sets that have the same property, and which naturally introduce the notion of flags 
introduced by Razborov in \cite{RazborovFlagAlgebras}. This connection to flags provides a nice combinatorial 
interpretation for these spanning sets which will be explained in Section~\ref{razrelation}.

\subsection{Spanning sets}\label{sub:spanningsets}
Recall that the vector space $W_{\tau_{\bm{\lambda}}}$ is a subspace of $\RR[\cV_n]_{\leq d}$ which is spanned by square-free monomials in $\binom{n}{2}$ variables
of degree at most $d$. We denote such a square-free monomial by $\mathsf{x}^m := \prod_{1\leq i<j\leq n} \mathsf{x}_{ij}^{m_{ij}}$ with $m_{ij}\in \{0,1\}$. 

A very na\"ive spanning set for $W_{\tau_{\bm{\lambda}}}$ can easily be generated as follows. For a tableau $\tau_{\bm{\lambda}}$ of shape $\bm{\lambda}=(\lambda_1, \lambda_2, \ldots)$ and a monomial $\mathsf{x}^m$, let $$\textup{sym}_{\tau_{\bm{\lambda}}}(\mathsf{x}^m) := \frac{1}{|\mathfrak{R}_{\tau_{\bm{\lambda}}}|}\sum_{\mathfrak{s} \in \mathfrak{R}_{\tau_{\bm{\lambda}}}} \mathfrak{s} \cdot \mathsf{x}^m$$ be the symmetrization of $\mathsf{x}^m$ under the row group  $\mathfrak{R}_{\tau_{\bm{\lambda}}}$. The polynomials $\textup{sym}_{\tau_{\bm{\lambda}}}(\mathsf{x}^m)$ as $\mathsf{x}^m$ varies over square-free monomials of degree at most $d$ form a natural spanning set for $W_{\tau_{\bm{\lambda}}}$. Indeed, recall that $W_{\tau_{\bm{\lambda}}}$ consists of $\mathfrak{R}_{\tau_{\bm{\lambda}}}$-invariant polynomials. Therefore, if $\mathsf{p}\in W_{\tau_{\bm{\lambda}}}$, then $\mathsf{p}=\textup{sym}_{\tau_{\bm{\lambda}}}(\mathsf{p})$ and so $\mathsf{p}$ is a linear combination of $\textup{sym}_{\tau_{\bm{\lambda}}}(\mathsf{x}^m)$ for different square-free monomials $\mathsf{x}^m$ of degree at most $d$.

We will show in Lemma~\ref{lemma:spansym} that the symmetrized monomials $\textup{sym}_{\tau_{\bm{\lambda}}}(\mathsf{x}^m)$ are superfluous when $\bm{\lambda}$ is not a hook.
Indeed, for any $\tau_{\bm{\lambda}}$, even when $\bm{\lambda}$ is not a hook, $W_{\tau_{\bm{\lambda}}}$ is in the span of symmetrizations of monomials under row groups of hooks.

\begin{definition}
Given a tableau $\tau_{\bm{\lambda}}$ of shape $\bm{\lambda}=(\lambda_1, \lambda_2, \ldots)$, define $\textup{hook}(\tau_{\bm{\lambda}})$ to be the tableau 
of shape $(\lambda_1, 1^{n-\lambda_1})$ such that the first row of $\textup{hook}(\tau_{\bm{\lambda}})$ is the same as that of $\tau_{\bm{\lambda}}$, and
the remaining labels in $\tau_{\bm{\lambda}}$ are put in the tail of the hook in increasing order.
\end{definition}

\begin{lemma}\label{lemma:spansym}
For the tableau $\tau_{\bm{\lambda}}$, the vector space $W_{\tau_{\bm{\lambda}}}$ is spanned by the polynomials $\textup{sym}_{\textup{hook}(\tau_{\bm{\lambda}})}(\mathsf{x}^m)$, as $\mathsf{x}^m$ varies over square-free monomials of degree at most $d$.
\end{lemma}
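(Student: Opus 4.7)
The statement to prove is that the polynomials $\textup{sym}_{\textup{hook}(\tau_{\bm{\lambda}})}(\mathsf{x}^m)$ form a spanning set for $W_{\tau_{\bm{\lambda}}}$ in the sense of Theorem~\ref{thm:GP with spanning set}, namely that their linear span contains $W_{\tau_{\bm{\lambda}}}$. This is the direction made explicit in the paragraph just before the lemma (``$W_{\tau_{\bm{\lambda}}}$ is in the span of symmetrizations of monomials under row groups of hooks'') and is the property that feeds into Theorem~\ref{thm:GP with spanning set}. Literal equality of the two subspaces cannot hold in general, because a typical $\textup{sym}_{\textup{hook}(\tau_{\bm{\lambda}})}(\mathsf{x}^m)$ is only invariant under the smaller group $\fR_{\textup{hook}(\tau_{\bm{\lambda}})}$ and hence need not be $\fR_{\tau_{\bm{\lambda}}}$-invariant (an explicit obstruction already appears for $\bm{\lambda}=(n-2,2)$, where the symmetrization of $\mathsf{x}_{1,n-1}$ is not preserved by the swap $(n{-}1,n)\in \fR_{\tau_{\bm{\lambda}}}$).

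The whole plan reduces to a single group inclusion:
\begin{equation*}
\fR_{\textup{hook}(\tau_{\bm{\lambda}})} \;=\; \fS_{\textup{row}_1(\tau_{\bm{\lambda}})} \;\subseteq\; \prod_{i\geq 1} \fS_{\textup{row}_i(\tau_{\bm{\lambda}})} \;=\; \fR_{\tau_{\bm{\lambda}}},
\end{equation*}
which is immediate from the definitions since $\textup{hook}(\tau_{\bm{\lambda}})$ has the same first row as $\tau_{\bm{\lambda}}$ while all its other rows are singletons and hence contribute nothing to the row group.

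Now take any $\mathsf{p}\in W_{\tau_{\bm{\lambda}}}$. By Definition~\ref{def:W-tau-lambda}, $\mathsf{p}$ is fixed by $\fR_{\tau_{\bm{\lambda}}}$ and therefore, by the inclusion above, by $\fR_{\textup{hook}(\tau_{\bm{\lambda}})}$ as well. Consequently $\mathsf{p} = \textup{sym}_{\textup{hook}(\tau_{\bm{\lambda}})}(\mathsf{p})$. Expanding $\mathsf{p} = \sum_m c_m\, \mathsf{x}^m$ in the basis of square-free monomials of degree at most $d$ and using linearity of the averaging operator gives
\begin{equation*}
\mathsf{p} \;=\; \textup{sym}_{\textup{hook}(\tau_{\bm{\lambda}})}(\mathsf{p}) \;=\; \sum_m c_m \, \textup{sym}_{\textup{hook}(\tau_{\bm{\lambda}})}(\mathsf{x}^m),
\end{equation*}
exhibiting $\mathsf{p}$ as a linear combination of the stated generators and completing the argument.

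I do not anticipate any technical obstacle: the proof is essentially the one-line remark that averaging an $H$-invariant function over a subgroup $K\leq H$ is the identity. The only point needing care is the interpretive one above, to flag why the conclusion is a containment of $W_{\tau_{\bm{\lambda}}}$ in the span of the symmetrizations rather than literal equality of subspaces. This asymmetry is precisely what the paper's ``spanning set'' convention in Theorem~\ref{thm:GP with spanning set} is designed to accommodate, so nothing in the downstream use of the lemma is lost.
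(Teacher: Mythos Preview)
Your proof is correct and follows essentially the same approach as the paper: both arguments hinge on the subgroup inclusion $\fR_{\textup{hook}(\tau_{\bm{\lambda}})}\subseteq \fR_{\tau_{\bm{\lambda}}}$, from which $W_{\tau_{\bm{\lambda}}}\subseteq V^{\fR_{\tau_{\bm{\lambda}}}}\subseteq V^{\fR_{\textup{hook}(\tau_{\bm{\lambda}})}}$, the latter being spanned by the hook-symmetrized monomials. Your added remark clarifying that the conclusion is containment rather than literal equality is correct and matches the paper's intended reading.
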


\proof

From the definition of $W_{\tau_{\bm{\lambda}}}$, we have that $W_{\tau_{\bm{\lambda}}} = V_{\bm{\lambda}}^{\fR_{\tau_{\bm{\lambda}}}}
\subseteq V^{\fR_{\tau_{\bm{\lambda}}}}$, the subspace of $\fR_{\tau_{\bm{\lambda}}}$-fixed elements of $V = \RR[\cV_n]_{\leq d}$. Since $\fR_{\textup{hook}(\tau_{\bm{\lambda}})}$
is a subgroup of $\fR_{\tau_{\bm{\lambda}}}$, it follows directly that $V^{\fR_{\tau_{\bm{\lambda}}}}\subseteq V^{\fR_{\textup{hook}(\tau_{\bm{\lambda}})}}$.
Hence $W_{\tau_{\bm{\lambda}}}\subseteq V^{\fR_{\textup{hook}(\tau_{\bm{\lambda}})}}$,
which is certainly spanned by the polynomials $\textup{sym}_{\textup{hook}(\tau_{\bm{\lambda}})}(\mathsf{x}^m)$,
as $\mathsf{x}^m$ varies over square-free monomials of degree at most $d$.
\qed

One can think about these hook polynomials in a combinatorial manner starting with the following definition.

\begin{definition}
The \emph{graph} of a square-free monomial $\mathsf{x}^m$, denoted as $G(\mathsf{x}^m)$, is the labeled subgraph of the complete graph $K_n$ containing the edge $\{i,j\}$ if $m_{ij} = 1$.
\end{definition}

Therefore, for any square-free monomial $\mathsf{x}^m$ and any tableau $\tau_{\bm{\lambda}}$ of shape $\bm{\lambda}=(\lambda_1, \lambda_2, \ldots)$, we have that

$$\textup{sym}_{\textup{hook}(\tau_{\bm{\lambda}})}(\mathsf{x}^m)=\frac{1}{|\mathfrak{R}_{\textup{hook}(\tau_{\bm{\lambda}})}|} \sum_{\sigma \in \mathfrak{R}_{\textup{hook}(\tau_{\bm{\lambda}})}} \prod_{\{i,j\} \in E(G(\mathsf{x}^m))} \mathsf{x}_{\sigma(i)\sigma(j)}.$$

Note that any vertex of $G(\mathsf{x}^m)$ that is in the tail of $\textup{hook}(\tau_{\bm{\lambda}})$ is fixed by $\mathfrak{R}_{\textup{hook}(\tau_{\bm{\lambda}})}$. This partitions the vertices of $G(\mathsf{x}^m)$ into two sets: those fixed by $\mathfrak{R}_{\textup{hook}(\tau_{\bm{\lambda}})}$, and those that are not. We now see how this phenomenon is mirrored by flags, partially labeled graphs where the labeled part is fixed. We can thus view graphs $G(\mathsf{x}^m)$ through the lens of flags and show that $\textup{sym}_{\textup{hook}(\tau_{\bm{\lambda}})}(\mathsf{x}^m)$ can be thought of as polynomials coming from flags instead. To formalize this connection, we introduce flags and density polynomials arising from flags and then show their equivalence to the symmetric polynomials defined above.

\begin{definition}\label{def:flags}
Consider three fixed integers $0 \leq t \leq f \leq n$.
\begin{enumerate}
\item An {\em intersection type} of size $t$ is a simple graph $T$ on $t$ vertices
in which every vertex is labeled with a distinct element of
$[t]$.
\item A  $T$-{\em
  flag} $F$ of size $f$ is a simple graph on $f$ vertices with $t$ of its
vertices labeled $1, \ldots, t$ such that these labeled vertices induce
a copy of $T$ in $F$ with identical labels for the vertices.
\item Let $\mathcal{F}^f_T$ be the set of all $T$-flags of size $f$ up
to isomorphism.
\end{enumerate}
\end{definition}

\begin{example}
Consider $T=\labeledcherries{1}{2}{3}$ and $f=4$. Then $\mathcal{F}_T^f$ consists of the $T$-flags:
\begin{align*}
 &F_0=\flagzero{1}{2}{3}{}, F_1=\flagone{1}{2}{3}{}, F_2=\flagtwo{1}{2}{3}{}, F_3=\flagthree{1}{2}{3}{}, \\
&F_4=\flagfour{1}{2}{3}{}, F_5=\flagfive{1}{2}{3}{}, F_6=\flagsix{1}{2}{3}{}, F_7=\flagseven{1}{2}{3}{}.
\end{align*}
\end{example}

\begin{definition}
For two sets $A$ and $B$, let $\textup{Inj}(A,B)$ denote the set of injective maps $h: A\rightarrow B$.
Suppose $F$ is a $T$-flag of size $f$ and  $\Theta \in \textup{Inj}([t],[n])$.
We say that $h \in \textup{Inj}(V(F),[n])$ respects the labeling $\Theta$ if $h(v)=\Theta(i)$ for any vertex $v\in V(F)$ labeled $i\in [t]$. Let $\textup{Inj}_{\Theta}(V(F),[n])$ denote the set of all injective maps $h: V(F) \rightarrow [n]$ that respect the labeling $\Theta$.
\end{definition}

\begin{example}
Suppose $T=\labeledcherries{1}{2}{3}$ and $F=\flagtwo{1}{2}{3}{}$. Identify $[n]$ with $\{a_1,a_2,a_3,\ldots\}$. Fix $\Theta \in \textup{Inj}([3],[n])$ to be the map
$\Theta(1)=a_2$, $\Theta(2)=a_1$ and $\Theta(3)=a_4$. Let $v$ denote the unlabeled vertex in $F$.
Then $h \in \textup{Inj}(V(F),[n])$ such that $h(1)=a_2$, $h(2)=a_1$, $h(3)=a_4$, $h(v)\in [n]\setminus\{a_1,a_2,a_4\}$ is $\Theta$-preserving, while
$h$ such that $h(1)=a_3$, $h(2)=a_1$, $h(3)=a_4$, $h(v)=a_2$ is not.
\end{example}

\begin{definition}
Fix $T, f, \Theta$. Then for every flag $F\in \mathcal{F}_{T}^f$, we let $$\mathsf{g}_F^\Theta=\sum_{h\in \textup{Inj}_{\Theta}(V(F),[n])}\prod_{\{i,j\}\in E(F)} \mathsf{x}_{h(i)h(j)}.$$
\end{definition}

Note that $\mathsf{g}_F^\Theta$ is always square-free since $F$ is assumed to be a simple graph. 
So the polynomials $\mathsf{g}_F^\Theta$ cannot span $\mathbb{R}[\mathsf{x}]_{\leq d}$
but can possibly span $\RR[\cV_n]_{\leq d}$.  

\begin{example}\label{ex:g}
Consider our running example with the labeling $\Theta$ fixed to be  $\Theta(1)=a_i$, $\Theta(2)=a_j$ and $\Theta(3)=a_k$ where $a_i,a_ j,a_k \in [n]$ are distinct and fixed. \\

\begin{center}
\begin{tabular}{|c|l|}
\hline
$F$ & $\mathsf{g}_F^\Theta$\\
\hline
$F_0=\flagzero{1}{2}{3}{}$ & $(n-3)\mathsf{x}_{ij}\mathsf{x}_{ik}$\\
$F_1=\flagone{1}{2}{3}{}$ & $\mathsf{x}_{ij}\mathsf{x}_{ik}\sum_{l\in [n] \backslash\{i,j,k\}} \mathsf{x}_{il}$\\
$F_4=\flagfour{1}{2}{3}{}$ & $\mathsf{x}_{ij}\mathsf{x}_{ik}\sum_{l\in [n] \backslash\{i,j,k\}} \mathsf{x}_{il} \mathsf{x}_{jl}$\\
$F_7=\flagseven{1}{2}{3}{}$ & $\mathsf{x}_{ij}\mathsf{x}_{ik}\sum_{l\in [n] \backslash\{i,j,k\}} \mathsf{x}_{il} \mathsf{x}_{jl}\mathsf{x}_{kl}$\\
\hline
\end{tabular}
\end{center}
\end{example}

The polynomials $\mathsf{g}_F^\Theta$ can be interpreted in terms of tableaux and their row groups.

\begin{definition}
For $\Theta \in \textup{Inj}([t],[n])$, let $\textup{hook}^\Theta$ be the tableau of shape $(n-t,1^t)$ 
where the labels in $\Theta([t])$  are placed in increasing order down the tail and the remaining labels in $[n]$ 
are put in increasing order in the first row.
\end{definition}

\begin{example}
Let $\Theta$ be as in Example~\ref{ex:g} and assume  $a_i < a_j < a_k$. Then
$$\textup{hook}^\Theta=\Yboxdimx{23pt}\young(<a_{l_1}><a_{l_2}>\cdots<a_{l_{n-3}}>,<a_i>,<a_j>,<a_k>)$$
where $a_{l_r} < a_{l_{r+1}}$ for all $r$, and $\{a_{l_1}, \ldots, a_{l_{n-3}} \}=[n]\backslash \{a_i,a_j,a_k\}$.
\end{example}

Since $\mathfrak{R}_{\textup{hook}^\Theta}$ can be identified with the symmetric group on the labels $[n]\backslash \Theta([t])$, we immediately get the following connection between 
$\mathsf{g}_F^\Theta$ and  $\textup{sym}_{\textup{hook}^\Theta}(\mathsf{x}^m)$.

\begin{lemma} \label{lem:g = sym hook}
Fix any $F\in \mathcal{F}_T^f$ and $\Theta\in \textup{Inj}([t],[n])$. Then the polynomial $\mathsf{g}_F^\Theta$ is equal to $\frac{(n-t)!}{(n-f)!}\cdot
\textup{sym}_{\textup{hook}^\Theta}(\mathsf{x}^m)$ where
$\mathsf{x}^m=\prod_{\{i,j\}\in E(F)} \mathsf{x}_{h^*(i)h^*(j)}$ for any fixed $h^*\in \textup{Inj}_{\Theta}(V(F),[n])$.
\end{lemma}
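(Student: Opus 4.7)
The plan is to unwind both sides by the same orbit–counting bookkeeping and identify the polynomials term by term. The key structural observation is that the row group $\mathfrak{R}_{\textup{hook}^\Theta}$, since $\textup{hook}^\Theta$ has a singleton tail filled by $\Theta([t])$, acts on $[n]$ as the full symmetric group on $[n]\setminus \Theta([t])$ and fixes every label in $\Theta([t])$ pointwise. In particular $|\mathfrak{R}_{\textup{hook}^\Theta}| = (n-t)!$, which accounts for the numerator in the claimed ratio.

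Next I would set up a map from $\mathfrak{R}_{\textup{hook}^\Theta}$ to $\textup{Inj}_\Theta(V(F),[n])$ by sending $\mathfrak{s}\mapsto h_{\mathfrak{s}}:= \mathfrak{s}\circ h^*$. This is well-defined: for any labeled vertex $v$ with label $i\in[t]$, we have $h^*(v)=\Theta(i)\in\Theta([t])$, and $\mathfrak{s}$ fixes that element, so $h_{\mathfrak{s}}(v)=\Theta(i)$, meaning $h_{\mathfrak{s}}$ respects $\Theta$. Injectivity of $h_{\mathfrak{s}}$ follows from injectivity of $h^*$ and of $\mathfrak{s}$. Conversely, given any $h\in \textup{Inj}_\Theta(V(F),[n])$, one can realize it as $h=\mathfrak{s}\circ h^*$ by choosing $\mathfrak{s}$ to send $h^*(v)\mapsto h(v)$ for each unlabeled $v\in V(F)$; since $h^*(V(F))$ and $h(V(F))$ each contain $\Theta([t])$ plus $f-t$ further elements of $[n]\setminus \Theta([t])$, this prescribes $\mathfrak{s}$ on a set of size $f-t$ within $[n]\setminus \Theta([t])$, and $\mathfrak{s}$ can be extended arbitrarily on the remaining $n-f$ elements. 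Hence the map is surjective with every fiber of size exactly $(n-f)!$.

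With this in hand, I would substitute into the definition of the symmetrization. Since $\mathfrak{s}\cdot \mathsf{x}^m = \prod_{\{i,j\}\in E(F)} \mathsf{x}_{\mathfrak{s}(h^*(i))\mathfrak{s}(h^*(j))} = \prod_{\{i,j\}\in E(F)} \mathsf{x}_{h_{\mathfrak{s}}(i)h_{\mathfrak{s}}(j)}$, the sum over $\mathfrak{R}_{\textup{hook}^\Theta}$ groups into fibers of size $(n-f)!$ over each $h\in \textup{Inj}_\Theta(V(F),[n])$. Therefore
\[
\textup{sym}_{\textup{hook}^\Theta}(\mathsf{x}^m) = \frac{1}{(n-t)!}\sum_{\mathfrak{s}\in\mathfrak{R}_{\textup{hook}^\Theta}}\!\!\mathfrak{s}\cdot\mathsf{x}^m = \frac{(n-f)!}{(n-t)!}\sum_{h\in\textup{Inj}_\Theta(V(F),[n])}\prod_{\{i,j\}\in E(F)}\mathsf{x}_{h(i)h(j)} = \frac{(n-f)!}{(n-t)!}\,\mathsf{g}_F^\Theta,
\]
which rearranges to the asserted identity. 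The only mildly delicate point, and the place I expect to spend care, is the fiber-size count: one must verify that the prescription of $\mathfrak{s}$ on $h^*(V(F)\setminus\{\text{labeled vertices}\})$ is consistent (this uses injectivity of $h^*$ and that $h$ respects $\Theta$) and that precisely $n-f$ free coordinates remain, so the answer is independent of the choice of $h^*$.
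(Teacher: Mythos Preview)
Your proof is correct and follows essentially the same approach as the paper: both arguments rest on the identification of $\mathfrak{R}_{\textup{hook}^\Theta}$ with the symmetric group on $[n]\setminus\Theta([t])$ and the observation that each $h\in\textup{Inj}_\Theta(V(F),[n])$ arises from exactly $(n-f)!$ permutations $\mathfrak{s}$ via $h=\mathfrak{s}\circ h^*$. Your version is more explicit about the fiber count, while the paper compresses the same bookkeeping into a single sentence.
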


\proof
This follows from the fact that $|\textup{Inj}_\Theta(V(F),[n])|=\binom{n-t}{f-t}(f-t)!$ and that, for each $h\in \textup{Inj}_\Theta(V(F),[n])$, there are $(n-f)!$ permutations $\sigma\in \mathfrak{R}_{\textup{hook}^\Theta}$ such that $\sigma(h(\mathsf{x}^m))=h(\mathsf{x}^m)$. 
\qed

\begin{example}
 From the previous example, we have that
 $$\mathsf{g}_{F_0}^\Theta= (n-3) \cdot \textup{sym}_{\textup{hook}^\Theta}(\mathsf{x}_{ij}\mathsf{x}_{ik}), \,\,\,\,
 \mathsf{g}_{F_1}^\Theta=(n-3) \cdot \textup{sym}_{\textup{hook}^\Theta}(\mathsf{x}_{ij}\mathsf{x}_{ik}\mathsf{x}_{i l_1}).$$
\end{example}

Therefore, we will be able to reinterpret our na\"ive spanning set for a particular $W_{\tau_{\bm{\lambda}}}$ in terms of flags. Before doing so, we introduce a second set of flag-based polynomials. Applying a  M\"obius transformation \cite[A.1]{LovaszBook} on the ${g_F^\Theta}$'s, we get a new set of polynomials. In Subsection \ref{razrelation}, we remark on their link to flag algebras and graph homomorphism densities.

\begin{definition}
\begin{enumerate}

\item Define
\begin{align*}
\mathcal{F}_{\geq T}^f := \bigcup \bigg\{F \,:\, & F \textup{ is a } T^\prime\textup{-flag of size } f
 \textup{ where the vertices } T' \textup{are the same as}\\
& \textup{those of } T \textup{ (including the labels), and } E(T') \supseteq E(T) \bigg\},
\end{align*}
and let $P_T^f$ be the poset on $\mathcal{F}_{\geq T}^f$ where $F\leq F'$ if $E(F)\subseteq E(F')$. This poset can be graded by the number of edges.

\item For a flag $F\in \mathcal{F}_{\geq T}^f$, define the polynomial
$$\mathsf{d}_F^\Theta :=\sum_{\substack{F'\in P_T^f:\\ F' \geq F}} (-1)^{|E(F')|-|E(F)|} \mathsf{g}_{F'}^\Theta.$$

\end{enumerate}
\end{definition}

Note that unlike $\mathcal{F}_{T}^f$, the set
$\mathcal{F}_{\geq T}^f$ is not defined up to isomorphism; for example, it would contain both \Fones{1} and \Fone{1} even though they are isomorphic flags. 

\begin{example} \label{ex:d theta F}
Consider $T$ and $f$ as before. They yield the following poset $P_T^f$.
\begin{center}
\begin{tikzpicture}[scale=0.8]
\node (bottom) at (0,0) {\flagzero{1}{2}{3}{}};
\node (Fone) at (-3,2) {\flagone{1}{2}{3}{}};
\node (Ftwo) at (-1,2) {\flagtwo{1}{2}{3}{}};
\node (Fthree) at (1,2) {\flagthree{1}{2}{3}{}};
\node (Gzero) at (3,2) {\gzero{1}{2}{3}{}};
\node (Ffour) at (-3.75,4) {\flagfour{1}{2}{3}{}};
\node (Ffive) at (-2.25,4) {\flagfive{1}{2}{3}{}};
\node (Fsix) at (-0.75, 4) {\flagsix{1}{2}{3}{}};
\node (Gone) at (0.75, 4) {\gone{1}{2}{3}{}};
\node (Gtwo) at (2.25, 4) {\gtwo{1}{2}{3}{}};
\node (Gthree) at (3.75, 4) {\gthree{1}{2}{3}{}};
\node (Fseven) at (-3,6) {\flagseven{1}{2}{3}{}};
\node (Gfour) at (-1, 6) {\gfour{1}{2}{3}{}};
\node (Gfive) at (1,6) {\gfive{1}{2}{3}{}};
\node (Gsix) at (3,6) {\gsix{1}{2}{3}{}};
\node (Gseven) at (0,8) {\gseven{1}{2}{3}{}};

\draw (bottom)--(Fone);
\draw (bottom)--(Ftwo);
\draw (bottom)--(Fthree);
\draw (bottom)--(Gzero);
\draw (Fone)--(Ffour)--(Ftwo)--(Fsix)--(Fthree)--(Ffive)--(Fone)--(Gone)--(Gzero)--(Gtwo)--(Ftwo)--(Gtwo)--(Gzero)--(Gthree)--(Fthree);
\draw (Ffour)--(Fseven)--(Ffive)--(Fseven)--(Fsix)--(Gsix)--(Gthree)--(Gfive);
\draw (Gfour)--(Gtwo)--(Gsix);
\draw (Ffour)--(Gfour)--(Gone)--(Gfive)--(Ffive);
\draw (Fseven)--(Gseven)--(Gfour)--(Gseven)--(Gfive)--(Gseven)--(Gsix);
\end{tikzpicture}
\end{center}

Continuing our example, for $F_6 = \flagsix{1}{2}{3}{}$, the polynomial
$$\mathsf{d}_{F_6}^\Theta=
\mathsf{x}_{ij}\mathsf{x}_{ik}
\sum_{l\in [n] \backslash\{i,j,k\}} \left(
\mathsf{x}_{jl} \mathsf{x}_{kl}
- \mathsf{x}_{il} \mathsf{x}_{jl}\mathsf{x}_{kl}
-\mathsf{x}_{jk}\mathsf{x}_{jl} \mathsf{x}_{kl}+\mathsf{x}_{jk}\mathsf{x}_{il} \mathsf{x}_{jl}\mathsf{x}_{kl} \right).$$
\end{example}

\begin{definition}
Given a tableau $\tau_{\bm{\lambda}}$ of shape $\bm{\lambda}=(\lambda_1, \lambda_2, \ldots)$, define $\Theta_{\tau_{\bm{\lambda}}}\in \textup{Inj}([n-\lambda_1],[n])$ such that $\Theta_{\tau_{\bm{\lambda}}}(i)$ is the $i$th smallest label not in the first row of $\tau_{\bm{\lambda}}$.
\end{definition}

\begin{lemma}\label{extravertices}
Let $F\in \mathcal{F}_T^f$ and $F'\in \mathcal{F}_T^{f'}$ where $n\geq f'>f$ and $E(F)=E(F')$, i.e., $F'$ is obtained from $F$ by adding isolated vertices. Then $$\mathsf{g}^\Theta_{F'}= \binom{n-f}{f'-f}(f'-f)! \cdot \mathsf{g}^\Theta_F.$$
\end{lemma}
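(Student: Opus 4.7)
The plan is to directly count the number of extensions of a map $h \in \textup{Inj}_\Theta(V(F),[n])$ to a map $h' \in \textup{Inj}_\Theta(V(F'),[n])$, and then observe that the monomial contributed by $h'$ depends only on $h$.

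First, I would set up the bookkeeping. Since $F'$ is obtained from $F$ by adding $f'-f$ isolated vertices, we can identify $V(F)$ with a subset of $V(F')$ of size $f$, and $E(F) = E(F')$. Every edge of $F'$ has both endpoints in $V(F)$. Consequently, for any $h' \in \textup{Inj}_\Theta(V(F'),[n])$, the monomial
\[ \prod_{\{i,j\}\in E(F')} \mathsf{x}_{h'(i)h'(j)} = \prod_{\{i,j\}\in E(F)} \mathsf{x}_{h'(i)h'(j)} \]
depends only on the restriction $h := h'|_{V(F)}$, which is itself an element of $\textup{Inj}_\Theta(V(F),[n])$ (injectivity is inherited, and the labels $1,\ldots,t$ lie in $V(F)$ since $F$ is a $T$-flag, so $\Theta$-respect is preserved).

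Next I would count, for each fixed $h \in \textup{Inj}_\Theta(V(F),[n])$, how many $h'$ extend it. Such an $h'$ must send the $f'-f$ added isolated vertices injectively into $[n] \setminus h(V(F))$, and there are no labeling constraints on these extra vertices. Since $|[n]\setminus h(V(F))| = n-f$, the number of such extensions is $(n-f)(n-f-1)\cdots(n-f'+1) = \binom{n-f}{f'-f}(f'-f)!$. Partitioning the defining sum for $\mathsf{g}^\Theta_{F'}$ according to the restriction to $V(F)$ then yields
\[ \mathsf{g}^\Theta_{F'} = \sum_{h\in \textup{Inj}_\Theta(V(F),[n])} \binom{n-f}{f'-f}(f'-f)!\, \prod_{\{i,j\}\in E(F)} \mathsf{x}_{h(i)h(j)} = \binom{n-f}{f'-f}(f'-f)!\cdot \mathsf{g}^\Theta_F, \]
which is the claimed identity.

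This is essentially a bookkeeping argument, so I do not anticipate any real obstacle. The only minor subtlety is confirming that the identification $V(F) \hookrightarrow V(F')$ used when defining "adding isolated vertices" respects the labels $1,\ldots,t$ (so that $\Theta$-respecting maps correspond), which is immediate from Definition~\ref{def:flags} since the labeled vertices are fixed in both $T$-flags.
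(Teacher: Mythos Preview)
Your proof is correct and follows essentially the same approach as the paper: both arguments replace $E(F')$ by $E(F)$ in the defining sum and then count, for each $h\in\textup{Inj}_\Theta(V(F),[n])$, the $\binom{n-f}{f'-f}(f'-f)!$ extensions to $\textup{Inj}_\Theta(V(F'),[n])$. Your write-up is simply more explicit about the bookkeeping than the paper's terse three-line version.
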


\proof
Since $E(F)=E(F')$,

\begin{align*}
\mathsf{g}_{F'}^\Theta&=\sum_{h\in \textup{Inj}_{\Theta}(V(F'),[n])}\prod_{\{i,j\}\in E(F')} \mathsf{x}_{h(i)h(j)}\\
&=\sum_{h\in \textup{Inj}_{\Theta}(V(F'),[n])}\prod_{\{i,j\}\in E(F)} \mathsf{x}_{h(i)h(j)}\\
&=\binom{n-f}{f'-f}(f'-f)! \sum_{h\in \textup{Inj}_{\Theta}(V(F),[n])}\prod_{\{i,j\}\in E(F)} \mathsf{x}_{h(i)h(j)}.\qedhere
\end{align*}

We will now show that for the vector space $W_{\tau_{\bm{\lambda}}}$, the polynomials $g_F^{\Theta_{\tau_{\bm{\lambda}}}}$ and $d_F^{\Theta_{\tau_{\bm{\lambda}}}}$ form a
spanning set as $F$ varies over all graphs in  $\mathcal{F}_T^{2d}$ where $T$ has $n-\lambda_1$ vertices. From now on, we denote the number of vertices in $T$ by $|T|$.

\begin{theorem}\label{thm:span}
For the tableau $\tau_{\bm{\lambda}}$, the vector space $W_{\tau_{\bm{\lambda}}}$ is spanned by
\begin{enumerate}
\item the polynomials $\mathsf{g}_F^{\Theta_{\tau_{\bm{\lambda}}}}$ with flags $F\in \mathcal{F}_T^{2d}$ where $|T|=n-\lambda_1$, and

\item  the polynomials $\mathsf{d}_F^{\Theta_{\tau_{\bm{\lambda}}}}$ with flags $F\in \mathcal{F}_T^{2d}$ where $|T|=n-\lambda_1$.

\end{enumerate}
\end{theorem}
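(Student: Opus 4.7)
The plan is to combine the preceding lemmas of this subsection, which reduce the problem in stages to the relationship between symmetrized monomials and flag polynomials, together with M\"obius inversion for part~(2). Starting from Lemma~\ref{lemma:spansym}, I know $W_{\tau_{\bm{\lambda}}}$ is contained in the span of $\textup{sym}_{\textup{hook}(\tau_{\bm{\lambda}})}(\mathsf{x}^m)$ as $\mathsf{x}^m$ ranges over square-free monomials of degree at most $d$. The observation linking this spanning set to flag polynomials is that, by construction of $\Theta_{\tau_{\bm{\lambda}}}$, the tableaux $\textup{hook}(\tau_{\bm{\lambda}})$ and $\textup{hook}^{\Theta_{\tau_{\bm{\lambda}}}}$ coincide, so that Lemma~\ref{lem:g = sym hook} applies and writes each such symmetrization as a nonzero scalar multiple of a flag polynomial $\mathsf{g}_F^{\Theta_{\tau_{\bm{\lambda}}}}$.

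For part~(1), given a square-free monomial $\mathsf{x}^m$ of degree $k \leq d$, I will construct the canonical $T$-flag $F_0$ whose labeled vertices $\{1,\ldots,t\}$ are mapped to $\Theta_{\tau_{\bm{\lambda}}}([t])$, whose unlabeled vertices correspond to the endpoints of $G(\mathsf{x}^m)$ lying outside $\Theta_{\tau_{\bm{\lambda}}}([t])$, and whose edges come from $E(G(\mathsf{x}^m))$. Lemma~\ref{lem:g = sym hook} then identifies $\mathsf{g}_{F_0}^{\Theta_{\tau_{\bm{\lambda}}}}$ with a nonzero scalar multiple of $\textup{sym}_{\textup{hook}(\tau_{\bm{\lambda}})}(\mathsf{x}^m)$. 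Because $G(\mathsf{x}^m)$ has at most $2d$ endpoints, I expect $|V(F_0)| \leq 2d$ in the situations relevant to $W_{\tau_{\bm{\lambda}}}$; Lemma~\ref{extravertices} then lets me pad $F_0$ with isolated unlabeled vertices until it reaches exactly $2d$ vertices, producing a flag $F \in \mathcal{F}_T^{2d}$ whose polynomial is a nonzero scalar multiple of $\mathsf{g}_{F_0}^{\Theta_{\tau_{\bm{\lambda}}}}$. Taking linear combinations over all such $\mathsf{x}^m$ proves~(1).

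For part~(2), the plan is to invoke M\"obius inversion on the refinement poset $P_T^{2d}$. By definition $\mathsf{d}_F^{\Theta_{\tau_{\bm{\lambda}}}} = \sum_{F' \geq F} (-1)^{|E(F')| - |E(F)|}\,\mathsf{g}_{F'}^{\Theta_{\tau_{\bm{\lambda}}}}$ is the M\"obius transform in $P_T^{2d}$; this transform is invertible, with inverse $\mathsf{g}_F^{\Theta_{\tau_{\bm{\lambda}}}} = \sum_{F' \geq F} \mathsf{d}_{F'}^{\Theta_{\tau_{\bm{\lambda}}}}$ obtained from the fact that the M\"obius function is $\mu(F,F')=(-1)^{|E(F')|-|E(F)|}$ on this Boolean-type lattice. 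Hence the families $\{\mathsf{g}_F^{\Theta_{\tau_{\bm{\lambda}}}}\}$ and $\{\mathsf{d}_F^{\Theta_{\tau_{\bm{\lambda}}}}\}$ indexed over $\mathcal{F}_{\geq T}^{2d}$ span the same subspace of $\RR[\cV_n]_{\leq d}$, and part~(2) follows from part~(1).

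The main technical obstacle I anticipate is the flag-size bookkeeping in part~(1): the natural flag $F_0$ has $t + |V(G(\mathsf{x}^m)) \setminus \Theta_{\tau_{\bm{\lambda}}}([t])|$ vertices, and the delicate step is verifying that this quantity is at most $2d$, so that Lemma~\ref{extravertices} can be invoked in the direction of adding rather than removing isolated unlabeled vertices. This is the point at which the constraint $\bm{\lambda} \in \Lambda$ from Corollary~\ref{cor:isodecomp}, which gives $t = n - \lambda_1 \leq 2d$, must be combined with $|V(G(\mathsf{x}^m))| \leq 2d$ in a way that accounts for the distinction between labeled and unlabeled endpoints.
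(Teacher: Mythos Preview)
Your proposal follows the paper's argument essentially step for step: Lemma~\ref{lemma:spansym} to pass to hook-symmetrized monomials, Lemma~\ref{lem:g = sym hook} to rewrite each as a flag polynomial, Lemma~\ref{extravertices} to pad to exactly $2d$ vertices, and M\"obius inversion on the poset $P_T^{2d}$ for part~(2). The M\"obius step is handled exactly as the paper does.

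The one substantive difference is in how the flag is built from a monomial $\mathsf{x}^m$, and this is exactly where your flagged ``technical obstacle'' sits. You take $F_0$ to carry all $t = n-\lambda_1$ labeled vertices (possibly isolated) plus the $b$ row-$1$ endpoints of $G(\mathsf{x}^m)$ as unlabeled vertices, so $|V(F_0)| = t + b$, which can exceed $2d$. The paper instead sets $V(F)=V(G(\mathsf{x}^m))$, so $|V(F)|\leq 2d$ is automatic and Lemma~\ref{extravertices} always pads in the right direction. The paper then asserts that ``the remaining $n-\lambda_1$ labeled vertices'' get relabelled via $\phi$ and that the resulting $\Theta$ equals $\Theta_{\tau_{\bm{\lambda}}}$; this is tacitly the case in which every tail label already occurs in $G(\mathsf{x}^m)$. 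In other words, the paper's construction trades your vertex-count worry for the (unaddressed) question of why one may assume all $n-\lambda_1$ tail labels appear among the endpoints of $\mathsf{x}^m$. These are dual formulations of the same bookkeeping issue, and neither write-up spells out the case $a<t$ explicitly; the paper simply does not pause over it. So your proposal is at the same level of rigor as the paper's proof, and your instinct that something needs to be said about the case where some tail labels are absent from $\mathsf{x}^m$ is well placed.
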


\proof
Recall from Lemma~\ref{lemma:spansym} that the polynomials $\textup{sym}_{\textup{hook}(\tau_{\bm{\lambda}})}(\mathsf{x}^m)$ span $W_{\tau_{\bm{\lambda}}}$ as $\mathsf{x}^m$ varies over certain square-free monomials of degree at most $d$.

Since $W_{\tau_{\bm{\lambda}}}$ sits in the span of the polynomials $\textup{sym}_{\textup{hook}(\tau_{\bm{\lambda}})}(\mathsf{x}^m)$, it also sits in the span of $\frac{|\mathfrak{R}_{\textup{hook}(\tau_{\bm{\lambda}})}|}{(n-|V(G(\mathsf{x}^m))|)!}\cdot \textup{sym}_{\textup{hook}(\tau_{\bm{\lambda}})}(\mathsf{x}^m)$,  where $\mathsf{x}^m$ varies over square-free monomials of degree at most $d$. For a polynomial $\frac{|\mathfrak{R}_{\textup{hook}(\tau_{\bm{\lambda}})}|}{(n-|V(G(\mathsf{x}^m))|)!}\cdot\textup{sym}_{\textup{hook}(\tau_{\bm{\lambda}})}(\mathsf{x}^m)$, consider the
graph $G(\mathsf{x}^m)$ and note that, since $\mathsf{x}^m$ has degree at most $d$, $G(\mathsf{x}^m)$ has at most $2d$ vertices. Unlabel any vertex in $G(\mathsf{x}^m)$ whose label is in $\textup{row}_1(\textup{hook}(\tau_{\bm{\lambda}}))$, and replace the labels of the remaining $n-\lambda_1$ labeled vertices with the labels in $[n-\lambda_1]$ through a map $\phi$ that does so in increasing order (i.e., the smallest labeled vertex becomes vertex $1$, and the largest labeled vertex becomes vertex $n-\lambda_1$). Let $F$ be that graph, and let $\Theta=\phi^{-1}$. Note that $|V(F)|=|V(G(\mathsf{x}^m))|$ and $\Theta=\Theta_{\tau_{\bm{\lambda}}}$. Then, by Lemma~\ref{lem:g = sym hook}, $\mathsf{g}_F^\Theta=\frac{|\mathfrak{R}_{\textup{hook}(\tau_{\bm{\lambda}})}|}{(n-|V(G(\mathsf{x}^m))|)!}\cdot \textup{sym}_{\textup{hook}(\tau_{\bm{\lambda}})}(\mathsf{x}^m)$, and so $W_{\tau_{\bm{\lambda}}}$ sits in the span of $\mathsf{g}_F^\Theta$ for flags $F\in \mathcal{F}_T^f$ where $f\leq 2d$, $|T|=n-\lambda_1$ and $\Theta=\Theta_{\tau_{\bm{\lambda}}}$. See Example~\ref{eg:process} for 
an illustration of the process just described.

By Lemma \ref{extravertices}, one only needs to keep the polynomials $\mathsf{g}_F^\Theta$ for flags $F\in \mathcal{F}_T^f$ where $f=2d$ and $|T|=n-\lambda_1$ since, for any flag $F^*\in \mathcal{F}_T^{f^*}$ where $f^*<2d$, there exists a flag $F^+$ with the same edge set but on $2d$ vertices such that $\mathsf{g}^\Theta_{F^+}=k\mathsf{g}^\Theta_{F^*}$ where $k$ is a scalar.

The spans of polynomials $\mathsf{g}_{F}^\Theta$ and $\mathsf{d}_{F}^\Theta$ for $F\in \mathcal{F}_{\leq T}^{2d}$ coincide since they are related by a M\"obius transformation.
Thus $W_{\tau_{\bm{\lambda}}}$ also sits in the span of the polynomials $\mathsf{d}_F^\Theta$ for flags $F\in \mathcal{F}_T^{2d}$ with $|T|=n-\lambda_1$ and where $\Theta=\Theta_{\tau_{\bm{\lambda}}}$.
\qed

\begin{remark}
Let $\bm{\lambda}, \bm{\lambda}'$ be partitions of $n$ with ${\lambda}_1 = {\lambda}'_1$, and let $\tau_{\bm{\lambda}}$ and $\tau_{\bm{\lambda}'}$ be two tableaux of shape $\bm{\lambda}$ and $\bm{\lambda}'$ such that  $\textup{row}_1(\tau_{\bm{\lambda}}) = \textup{row}_1(\tau_{\bm{\lambda}'})$ and thus $\textup{hook}(\tau_{\bm{\lambda}})=\textup{hook}(\tau_{\bm{\lambda}'})$. Then, since ${\Theta_{\tau_{\bm{\lambda}}}} = {\Theta_{\tau_{\bm{\lambda}'}}}$, the vector spaces $W_{\tau_{\bm{\lambda}}}$ and $W_{\tau_{\bm{\lambda}'}}$ are both spanned by 
the polynomials $\mathsf{g}_F^{\Theta_{\tau_{\bm{\lambda}}}}$ (respectively  $\mathsf{d}_F^{\Theta_{\tau_{\bm{\lambda}}}}$ ) with flags $F\in \mathcal{F}_T^{2d}$ where $|T|=n-\lambda_1$. In particular, the 
polynomials in Theorem~\ref{thm:span} that span 
$W_{\textup{hook}(\tau_{\bm{\lambda}})}$ also span $W_{\tau_{\bm{\lambda}}}$. 
\end{remark}

\begin{example}
\label{eg:process}
 We give an example to illustrate the proof of the theorem above. Consider any tableau $\tau_{\bm{\lambda}}$ for which

$$\textup{hook}(\tau_{\bm{\lambda}}) = \Yboxdimx{23pt}\young(<a_{l_1}><a_{l_2}>\cdots<a_{l_{n-3}}>,<a_i>,<a_j>,<a_k>)$$
where $a_i < a_j < a_k$ are fixed and $\{a_{l_1}, \ldots, a_{l_{n-3}}\}=[n]\backslash\{a_i, a_j, a_k\}$. Now consider the monomial $\mathsf{x}^m = \mathsf{x}_{ij}\mathsf{x}_{ik}\mathsf{x}_{jl_r}\mathsf{x}_{kl_r}$ for some fixed $l_r$ such that $1\leq r \leq n-3$.
Then note that
$$\frac{|\mathfrak{R}_{\textup{hook}(\tau_{\bm{\lambda}})}|}{(n-|V(G(\mathsf{x}^m))|)!}\cdot \textup{sym}_{\textup{hook}(\tau_{\bm{\lambda}})}(\mathsf{x}^m) =\mathsf{x}_{ij} \mathsf{x}_{ik} \sum_{l \in [n] \backslash \{i,j,k\}} x_{jl}x_{kl}.$$

The graph $G(\mathsf{x}^m)$ is \flagsix{$\bm{i}$}{$\bm{j}$}{$\bm{k}$}{$\bm{l}_r$}. Unlabeling the vertex labeled $l_r$ since $l_r$ is in the first row of $\textup{hook}(\tau_{\bm{\lambda}})$, we get the graph
$\flagsix{$\bm{i}$}{$\bm{j}$}{$\bm{k}$}{}$. Now we replace the labels $i,j,k$ with $1,2,3$ in that order to get $F_6 = \flagsix{$1$}{$2$}{$3$}{}$. Thus the map $\phi$ in the proof of the lemma sends $i \mapsto 1, j \mapsto 2, k \mapsto 3$. Hence take
$\Theta$ to be the map that sends $1 \mapsto i, 2 \mapsto j, 3 \mapsto k$ and $F$ to be the flag $F_6$.
Then the polynomial $\mathsf{g}^{\Theta}_{F_6}$ is exactly
$(n-3)\cdot \textup{sym}_{\textup{hook}(\tau_{\bm{\lambda}})}(\mathsf{x}^m)$.
\end{example}

We now observe that, when $\bm{\lambda}\geq_{\textup{lex}} (n-2d, 1^{2d})$, we have been successful in generating spanning sets for $W_{\tau_{\bm{\lambda}}}$ whose sizes are independent of $n$.

\begin{proposition}
\label{prop:mlambda-indn}
Fix some tableau $\tau_{\bm{\lambda}}$ of shape $\bm{\lambda}\geq_{\textup{lex}} (n-2d,1^{2d})$. Then the number of polynomials in each spanning set of $W_{\tau_{\bm{\lambda}}}$ described in Lemma~\ref{lemma:spansym} and Theorem~\ref{thm:span} is independent of $n$.
\end{proposition}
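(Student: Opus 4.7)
The plan is to bound the cardinality of each of the three spanning sets by a quantity depending only on $d$, exploiting the fact that the constraint $\bm{\lambda}\geq_{\textup{lex}}(n-2d,1^{2d})$ forces $\lambda_1\geq n-2d$, so the tail of $\textup{hook}(\tau_{\bm{\lambda}})$ has at most $2d$ boxes.

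First, for the spanning sets in Theorem~\ref{thm:span}, the polynomials $\mathsf{g}_F^{\Theta_{\tau_{\bm{\lambda}}}}$ and $\mathsf{d}_F^{\Theta_{\tau_{\bm{\lambda}}}}$ are indexed by isomorphism classes of $T$-flags of size $2d$, where $|T| = n - \lambda_1 \leq 2d$. A $T$-flag of size $2d$ is a simple graph on $2d$ vertices in which $|T|$ of the vertices carry fixed labels $1,\ldots,|T|$; the isomorphism relation only permutes the remaining $2d-|T|$ unlabeled vertices. Hence $|\mathcal{F}_T^{2d}| \leq 2^{\binom{2d}{2}}$, a bound that depends only on $d$. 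Summing over the finitely many intersection types $T$ of size at most $2d$ leaves the total count independent of $n$.

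Second, for the spanning set of Lemma~\ref{lemma:spansym}, I would argue that the number of \emph{distinct} polynomials $\textup{sym}_{\textup{hook}(\tau_{\bm{\lambda}})}(\mathsf{x}^m)$ is also bounded independently of $n$. The cleanest route is to invoke Lemma~\ref{lem:g = sym hook}: each nonzero $\textup{sym}_{\textup{hook}(\tau_{\bm{\lambda}})}(\mathsf{x}^m)$ equals a nonzero scalar multiple of $\mathsf{g}_{F}^{\Theta_{\tau_{\bm{\lambda}}}}$ for some flag $F$ obtained from the graph $G(\mathsf{x}^m)$ by unlabeling its first-row vertices and relabeling the remaining vertices via $\phi$, exactly as in the proof of Theorem~\ref{thm:span}. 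Since $\mathsf{x}^m$ has degree at most $d$, its graph has at most $2d$ vertices, and by Lemma~\ref{extravertices} we may pad $F$ with isolated vertices to obtain a member of $\mathcal{F}_T^{2d}$. Thus the distinct symmetrized monomials are in a many-to-one correspondence with isomorphism classes of such flags, and the count from the previous paragraph applies.

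There is no serious obstacle here: the argument is essentially a bookkeeping exercise. The only conceptual subtlety is noting that the ambient parameter $n$ enters only through the $\lambda_1$ interchangeable first-row labels of $\textup{hook}(\tau_{\bm{\lambda}})$, which the symmetrization collapses; all combinatorially meaningful information lives on at most $2d$ vertices.
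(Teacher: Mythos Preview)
Your proposal is correct and follows essentially the same approach as the paper's proof: both note that $\bm{\lambda}\geq_{\textup{lex}}(n-2d,1^{2d})$ forces $n-\lambda_1\leq 2d$, then observe that the number of intersection types on $n-\lambda_1$ vertices and the cardinality of $\mathcal{F}_T^{2d}$ depend only on $d$, and finally invoke Lemma~\ref{lem:g = sym hook} to transfer the bound to the symmetrized-monomial spanning set of Lemma~\ref{lemma:spansym}. Your version is slightly more explicit (giving the crude bound $2^{\binom{2d}{2}}$ and spelling out the many-to-one correspondence via the proof of Theorem~\ref{thm:span}), but the underlying argument is identical.
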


\proof
Suppose $\bm{\lambda}=(\lambda_1, \lambda_2, \ldots)$. Then, since $\bm{\lambda}\geq_{\textup{lex}} (n-2d,1^{2d})$, we have that $n-\lambda_1 \leq 2d$. Thus, the polynomials $\mathsf{g}^{\Theta_{\tau_{\bm{\lambda}}}}_F$ (or $\mathsf{d}^{\Theta_{\tau_{\bm{\lambda}}}}_F$) with flags $F\in \mathcal{F}^{2d}_T$ span $W_{\tau_{\bm{\lambda}}}$ as $T$ varies over all intersection types on $n-\lambda_1\leq 2d$ vertices. Note that the number of intersection types on at most $2d$ vertices does not depend on $n$. Moreover, note that the number of flags in $\mathcal{F}_T^{2d}$ for any intersection type $T$ such that $|T|\leq 2d$ is independent of $n$ by Definition \ref{def:flags}. Thus the number of polynomials $\mathsf{g}^{\Theta_{\tau_{\bm{\lambda}}}}_F$ (or $\mathsf{d}^{\Theta_{\tau_{\bm{\lambda}}}}_F$) does not depend on $n$. By the bijection in Lemma~\ref{lem:g = sym hook}, the result holds.
\qed

\subsection{Relationship with flag and graph densities}\label{razrelation}
We now discuss how to interpret combinatorially the spanning sets involving $\mathsf{g}^{\Theta_{\tau_{\bm{\lambda}}}}_F$'s or $\mathsf{d}^{\Theta_{\tau_{\bm{\lambda}}}}_F$'s.

The {\em characteristic vector } of a graph $G$, denoted as $\mathds{1}_G$, is the vector in $\{0,1\}^{{n \choose 2}}$ whose $ij$-entry is one if and only if $\{i,j\} \in E(G)$. The polynomial $\mathsf{g}^\Theta_F$, when evaluated on $\mathds{1}_G$ for  a graph $G$ on $n$ vertices, calculates the number ways of picking uniformly at random a set of $t$ vertices of $G$ to label by $\Theta$ and $f-t$ additional unlabeled vertices in such a way that they produce a copy of $F$ in $G$ (not necessarily as an induced subgraph). These numbers are at the core of the theory of graph homomorphisms.
In fact, the polynomials $\mathsf{g}^\Theta_F$ and $\mathsf{d}^\Theta_F$, and
the relationship between them, exactly matches the graph densities
$t_{\textup{inj}}$ and $t_{\textup{ind}}$ in the theory of graph
limits~\cite[Chapter 5]{LovaszBook}.

On the other hand, we now observe that $\mathsf{d}_F^\Theta(\mathds{1}_G)$ for a  graph $G$ on $n$ vertices,
is the number of ways in which one can induce a copy of $F$ in $G$ by picking uniformly at random $t$ vertices in $G$ to label by $\Theta$ along with $f-t$ unlabeled vertices i.e., $\binom{n-t}{f-t} (f-t)!$ times the {\em flag density of $F$ in $G$}, as in the theory of flag algebras \cite{RazborovFlagAlgebras}. This brings us to a combinatorial
 expression for the polynomial $\mathsf{d}^\Theta_F$.

\begin{lemma}
The polynomial
$$\mathsf{d}^\Theta_F=\sum_{h \in\textup{Inj}_\Theta(V(F),[n])}  \prod_{\{i,j\}\in E(F)}\mathsf{x}_{h(i)h(j)}\prod_{\{i,j\}\in \binom{V(F)}{2}\backslash E(F)}(1-\mathsf{x}_{h(i)h(j)}).$$
\end{lemma}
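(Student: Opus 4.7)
The plan is to unwind the definition of $\mathsf{d}^\Theta_F$, swap the order of summation, and then recognize a standard expansion of a product $\prod(1-y)$ in terms of subset sums with signs.

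First I would substitute the definition of $\mathsf{g}^\Theta_{F'}$ into the definition
$$\mathsf{d}^\Theta_F = \sum_{F' \in P_T^f,\ F' \geq F} (-1)^{|E(F')|-|E(F)|}\, \mathsf{g}^\Theta_{F'}.$$
A key observation, read off directly from the definition of $\mathcal{F}_{\geq T}^f$, is that every $F' \geq F$ has the same vertex set as $F$ with identical labels, so $\textup{Inj}_\Theta(V(F'),[n]) = \textup{Inj}_\Theta(V(F),[n])$. This lets me safely interchange the two summations and write
\begin{align*}
\mathsf{d}^\Theta_F &= \sum_{h \in \textup{Inj}_\Theta(V(F),[n])} \sum_{F' \geq F} (-1)^{|E(F')|-|E(F)|} \prod_{\{i,j\} \in E(F')} \mathsf{x}_{h(i)h(j)}.
\end{align*}

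Next I would parametrize the inner sum by $S := E(F') \setminus E(F)$, which ranges freely over subsets of $\binom{V(F)}{2} \setminus E(F)$. Pulling out the common factor $\prod_{\{i,j\} \in E(F)} \mathsf{x}_{h(i)h(j)}$, the inner sum becomes
\begin{align*}
\prod_{\{i,j\} \in E(F)} \mathsf{x}_{h(i)h(j)} \sum_{S \subseteq \binom{V(F)}{2}\setminus E(F)} (-1)^{|S|} \prod_{\{i,j\} \in S} \mathsf{x}_{h(i)h(j)}.
\end{align*}
Finally I invoke the elementary identity $\sum_{S \subseteq X} (-1)^{|S|} \prod_{x \in S} y_x = \prod_{x \in X}(1-y_x)$, applied with $X = \binom{V(F)}{2}\setminus E(F)$ and $y_{\{i,j\}} = \mathsf{x}_{h(i)h(j)}$, to collapse the alternating sum into the desired product, giving exactly the stated formula.

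There is no real obstacle here — the only subtlety worth double-checking is the compatibility of injection sets across the poset $P_T^f$ (so that interchanging sums is legitimate), and a careful bookkeeping of the sign $(-1)^{|E(F')|-|E(F)|} = (-1)^{|S|}$. Everything else is a direct application of M\"obius inversion on the Boolean lattice of non-edges.
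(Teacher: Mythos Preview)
Your proof is correct and takes essentially the same approach as the paper: both arguments rest on the elementary identity $\sum_{S\subseteq X}(-1)^{|S|}\prod_{x\in S}y_x=\prod_{x\in X}(1-y_x)$ applied to the set of non-edges of $F$. The only difference is direction---the paper expands the product formula and matches it to the defining alternating sum, whereas you start from the defining sum and collapse it into the product; your bookkeeping of the bijection $F'\leftrightarrow S=E(F')\setminus E(F)$ is arguably more explicit than the paper's.
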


\proof
Expanding the expression on the right-hand side we obtain
$$\sum_{h\in \textup{Inj}_{\Theta}(V(F),[n])}\sum_{F'\supseteq F} a_{F'}\prod_{\{i,j\}\in E(F')} \mathsf{x}_{h(i)h(j)},$$
where $a_{F'}=(-1)^{|E(F')|-|E(F)|}\cdot|\{F''|F'' \textup{ is isomorphic to } F'\}|$  and $F' \supseteq  F$ is a graph on the labeled and unlabeled vertices of $F$ and where any edge in $F$ is also an edge in $F'$. In other words,
$F' \in \mathcal{F}_{\geq T}^f$ where $T$ is the intersection type of $F$ and $f = | V(F) |$, and the result holds.
\qed

\begin{example}
Recall from Example~\ref{ex:d theta F} that $$\mathsf{d}_{F_6}^\Theta=
\mathsf{x}_{ij}\mathsf{x}_{ik}
\sum_{l\in [n] \backslash\{i,j,k\}} \left(
\mathsf{x}_{jl} \mathsf{x}_{kl}
- \mathsf{x}_{il} \mathsf{x}_{jl}\mathsf{x}_{kl}
-\mathsf{x}_{jk}\mathsf{x}_{jl} \mathsf{x}_{kl}+\mathsf{x}_{jk}\mathsf{x}_{il} \mathsf{x}_{jl}\mathsf{x}_{kl} \right).$$
However, this expression can be rewritten to see that
$$
\mathsf{d}_{F_6}^\Theta=\sum_{l\in [n] \backslash\{i,j,k\}} \mathsf{x}_{ij}\mathsf{x}_{ik} \mathsf{x}_{jl} \mathsf{x}_{kl} (1- \mathsf{x}_{il})
(1-\mathsf{x}_{jk}).$$

\end{example}

% !TEX root =  main.tex
\section{Flag sums of squares for the hypercube}
\label{sec:razborov sos}

Applying Theorem \ref{thm:GP with spanning set} to the spanning sets for $W_{\tau_{\bm{\lambda}}}$ consisting of the
$\mathsf{d}^\Theta_F$ and $\mathsf{g}^\Theta_F$ polynomials from the previous section, we immediately get the following corollaries.

\begin{corollary}\label{cor:gsumsofsquares}
Suppose $\mathsf{p}$ is symmetric and $d$-sos.
For each partition $\bm{\lambda} \in \Lambda$, fix a tableau $\tau_{\bm{\lambda}}$ of shape $\bm{\lambda}$. Then there exists psd matrices $R_{\bm{\lambda}}$ such that
$$ \mathsf{p} = \sum_{\bm{\lambda} \in \Lambda} \textup{tr}( R_{\bm{\lambda}} \, Z^{\tau_{\bm{\lambda}}}),$$
where $Z^{\tau_{\bm{\lambda}}} := \textup{sym}(\bm{\mathsf{g}}_{\tau_{\bm{\lambda}}}\bm{\mathsf{g}}_{\tau_{\bm{\lambda}}}^\top )$ and $\bm{\mathsf{g}}_{\tau_{\bm{\lambda}}}$ is the vector of polynomials $\mathsf{g}^{\Theta_{\tau_{\bm{\lambda}}}}_F$ such that $F\in \mathcal{F}_T^{2d}$ where $|T|=n-\lambda_1$.
\end{corollary}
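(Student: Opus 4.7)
The plan is to chain two earlier results together, essentially without any additional work. First, I would invoke part~(1) of Theorem~\ref{thm:span}, which guarantees that for each partition $\bm{\lambda} \in \Lambda$ and each chosen tableau $\tau_{\bm{\lambda}}$ of shape $\bm{\lambda}$, the subspace $W_{\tau_{\bm{\lambda}}}$ sits in the span of the polynomials $\mathsf{g}^{\Theta_{\tau_{\bm{\lambda}}}}_F$ as $F$ ranges over $\mathcal{F}^{2d}_T$ with $|T| = n - \lambda_1$. These are exactly the entries of the vector $\bm{\mathsf{g}}_{\tau_{\bm{\lambda}}}$ appearing in the statement. In particular, this furnishes a concrete spanning set for each multiplicity-carrying subspace whose size, by Proposition~\ref{prop:mlambda-indn}, is independent of $n$.

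Next, I would feed this spanning set into Theorem~\ref{thm:GP with spanning set}. Under the hypothesis that $\mathsf{p}$ is $\fS_n$-invariant and $d$-sos, that theorem produces psd matrices $R_{\bm{\lambda}}$, one for each $\bm{\lambda} \in \Lambda$, satisfying
\[
\mathsf{p} \;=\; \sum_{\bm{\lambda} \in \Lambda} \textup{tr}\bigl(R_{\bm{\lambda}} \, Z^{\tau_{\bm{\lambda}}}\bigr),
\]
where $Z^{\tau_{\bm{\lambda}}}_{ij} = \textup{sym}(\mathsf{p}_i^{\tau_{\bm{\lambda}}} \mathsf{p}_j^{\tau_{\bm{\lambda}}})$ for the spanning polynomials $\mathsf{p}_i^{\tau_{\bm{\lambda}}}$. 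Taking those spanning polynomials to be the entries of $\bm{\mathsf{g}}_{\tau_{\bm{\lambda}}}$, the matrix $Z^{\tau_{\bm{\lambda}}}$ is precisely $\textup{sym}(\bm{\mathsf{g}}_{\tau_{\bm{\lambda}}} \bm{\mathsf{g}}_{\tau_{\bm{\lambda}}}^\top)$, since the $(i,j)$ entry of the outer product is $\mathsf{g}^{\Theta_{\tau_{\bm{\lambda}}}}_{F_i}\mathsf{g}^{\Theta_{\tau_{\bm{\lambda}}}}_{F_j}$ and the symmetrization operator $\textup{sym}(\cdot)$ applies entrywise.

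There is no real obstacle here: the argument is a mechanical substitution of the flag spanning set from Theorem~\ref{thm:span} into Theorem~\ref{thm:GP with spanning set}, with the only bookkeeping being the identification of the polynomial matrix $Z^{\tau_{\bm{\lambda}}}$ in Theorem~\ref{thm:GP with spanning set} with the outer-product form $\textup{sym}(\bm{\mathsf{g}}_{\tau_{\bm{\lambda}}} \bm{\mathsf{g}}_{\tau_{\bm{\lambda}}}^\top)$ stated here. The substantive content of the corollary lies not in its proof, which is a one-line combination, but in the fact that the resulting sos decomposition uses only flag polynomials $\mathsf{g}^{\Theta_{\tau_{\bm{\lambda}}}}_F$ drawn from a collection whose size is independent of $n$.
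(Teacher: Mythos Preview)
Your proposal is correct and matches the paper's own approach: the corollary is stated immediately after the sentence ``Applying Theorem~\ref{thm:GP with spanning set} to the spanning sets for $W_{\tau_{\bm{\lambda}}}$ consisting of the $\mathsf{d}^\Theta_F$ and $\mathsf{g}^\Theta_F$ polynomials from the previous section, we immediately get the following corollaries,'' which is exactly the two-step combination of Theorem~\ref{thm:span}(1) with Theorem~\ref{thm:GP with spanning set} that you describe.
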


\begin{corollary}\label{cor:dsumsofsquares}
Suppose $\mathsf{p}$ is symmetric and $d$-sos.
For each partition $\bm{\lambda} \in \Lambda$, fix a tableau $\tau_{\bm{\lambda}}$ of shape $\bm{\lambda}$. Then there exists psd matrices $R_{\bm{\lambda}}$ such that
$$ \mathsf{p} =  \sum_{\bm{\lambda} \in \Lambda} \textup{tr}( R_{\bm{\lambda}} \, Z^{\tau_{\bm{\lambda}}}),$$
where $Z^{\tau_{\bm{\lambda}}} := \textup{sym}(\bm{\mathsf{d}}_{\tau_{\bm{\lambda}}}\bm{\mathsf{d}}_{\tau_{\bm{\lambda}}}^\top )$ and $\bm{\mathsf{d}}_{\tau_{\bm{\lambda}}}$ is the vector of polynomials $\mathsf{d}^{\Theta_{\tau_{\bm{\lambda}}}}_F$ such that $F\in \mathcal{F}_T^{2d}$ where $|T|=n-\lambda_1$.
\end{corollary}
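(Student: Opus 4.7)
The plan is to view Corollary~\ref{cor:dsumsofsquares} as a direct specialization of Theorem~\ref{thm:GP with spanning set}, where the abstract spanning set required by that theorem is taken to be the explicit flag-based spanning set furnished by part (2) of Theorem~\ref{thm:span}. In other words, all the work has already been done, and what remains is simply to assemble the pieces.

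First, I would fix, for each partition $\bm{\lambda} \in \Lambda$, a tableau $\tau_{\bm{\lambda}}$ of shape $\bm{\lambda}$, and consider the associated injection $\Theta_{\tau_{\bm{\lambda}}} \in \textup{Inj}([n-\lambda_1],[n])$. Let $T$ be the intersection type on $n-\lambda_1$ vertices consistent with $\tau_{\bm{\lambda}}$ (recall $|T| = n - \lambda_1 \leq 2d$ since $\bm{\lambda} \in \Lambda$). Then Theorem~\ref{thm:span}(2) guarantees that
\[
W_{\tau_{\bm{\lambda}}} \subseteq \textup{span}\{ \mathsf{d}_F^{\Theta_{\tau_{\bm{\lambda}}}} \,:\, F \in \mathcal{F}_T^{2d} \}.
\]
That is, the finite collection of polynomials appearing as entries of $\bm{\mathsf{d}}_{\tau_{\bm{\lambda}}}$ already contains $W_{\tau_{\bm{\lambda}}}$ in its span, as required by the hypothesis of Theorem~\ref{thm:GP with spanning set}.

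Second, I would apply Theorem~\ref{thm:GP with spanning set} with $\mathsf{p}_i^{\tau_{\bm{\lambda}}}$ chosen to be the entries of $\bm{\mathsf{d}}_{\tau_{\bm{\lambda}}}$. This theorem then yields psd matrices $R_{\bm{\lambda}}$ of the appropriate size such that
\[
\mathsf{p} = \sum_{\bm{\lambda} \in \Lambda} \textup{tr}(R_{\bm{\lambda}} \, Z^{\tau_{\bm{\lambda}}}), \qquad Z^{\tau_{\bm{\lambda}}}_{ij} = \textup{sym}(\mathsf{d}_{F_i}^{\Theta_{\tau_{\bm{\lambda}}}} \mathsf{d}_{F_j}^{\Theta_{\tau_{\bm{\lambda}}}}),
\]
which, upon repackaging the entrywise formula as $Z^{\tau_{\bm{\lambda}}} = \textup{sym}(\bm{\mathsf{d}}_{\tau_{\bm{\lambda}}} \bm{\mathsf{d}}_{\tau_{\bm{\lambda}}}^\top)$, is precisely the statement to be proved.

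There is no substantial obstacle here; the only thing to be careful about is bookkeeping, namely that the spanning sets provided by Theorem~\ref{thm:span} are keyed to the correct choices of $\tau_{\bm{\lambda}}$ and $\Theta_{\tau_{\bm{\lambda}}}$, so that the vectors $\bm{\mathsf{d}}_{\tau_{\bm{\lambda}}}$ in the statement of the corollary match the spanning polynomials used when invoking Theorem~\ref{thm:GP with spanning set}. Granted this, the proof is a one-line appeal: combine Theorem~\ref{thm:span}(2) with Theorem~\ref{thm:GP with spanning set}. (The analogous proof of Corollary~\ref{cor:gsumsofsquares} uses Theorem~\ref{thm:span}(1) in place of part (2) and is otherwise identical.)
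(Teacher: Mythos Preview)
Your proposal is correct and matches the paper's approach exactly: the paper states these corollaries as immediate consequences of applying Theorem~\ref{thm:GP with spanning set} to the spanning sets from Theorem~\ref{thm:span}. One small imprecision worth flagging: you write ``Let $T$ be the intersection type on $n-\lambda_1$ vertices consistent with $\tau_{\bm{\lambda}}$,'' but there is no single intersection type associated with $\tau_{\bm{\lambda}}$; rather, $T$ ranges over \emph{all} intersection types of size $n-\lambda_1$, and the vector $\bm{\mathsf{d}}_{\tau_{\bm{\lambda}}}$ collects the polynomials $\mathsf{d}_F^{\Theta_{\tau_{\bm{\lambda}}}}$ for all flags $F \in \mathcal{F}_T^{2d}$ as $T$ varies over these.
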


By Proposition~\ref{prop:mlambda-indn}, the size of each $R_{\bm{\lambda}}$, which equals the size of the spanning set being used for $W_{\tau_{\bm{\lambda}}}$,  is independent of $n$. By Proposition~\ref{prop:total number of m lambdas}, the number of partitions $\bm{\lambda} \in \Lambda$ is also independent of $n$. Therefore, the sizes of the SDPs that need to be solved in Corollaries~\ref{cor:gsumsofsquares} and \ref{cor:dsumsofsquares} are independent of $n$.
Thus, we have established that symmetric non-negative polynomials over the discrete hypercube $\mathcal{V}_n$ have succinct sos certificates that come from the flag polynomials $\mathsf{d}^\Theta_F$ and their cousins $\mathsf{g}^\Theta_F$. In particular,
flag algebra techniques extend beyond the realm of extremal graph theory and can be used to establish
symmetric inequalities on $\mathcal{V}_n$.

The sos expressions that typically appear in the flag algebra literature, e.g., \cite{Razborov10}, \cite{Razborov14}, \cite{Falgas-RavryVaughan}, are more restricted than those in Corollary~\ref{cor:dsumsofsquares}.
The main result of this section is to show that even these rather special looking sos expressions, which we will refer to as {\em flag sos} expressions, are sufficient to establish the non-negativity of symmetric polynomials over $\mathcal{V}_n$. The first step in passing to these restricted sos expressions is the following observation.

\begin{lemma} \label{lem:orthogonality}
For two flags $F$ and $F'$ with different intersection types of equal size $t$, and any labeling $\Theta \in \textup{Inj}([t],[n])$, $\mathsf{d}_F^\Theta\mathsf{d}_{F'}^\Theta$
is the zero function on $\mathcal{V}_n$.
\end{lemma}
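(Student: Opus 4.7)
The plan is to exploit the closed-form expression for $\mathsf{d}^\Theta_F$ just established in the preceding lemma, together with the fact that on the hypercube $\mathcal{V}_n$ we have $\mathsf{x}_{kl}(1-\mathsf{x}_{kl})\in \cI_n$ for every pair $k,l$.

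First I would unpack the structure of each summand of $\mathsf{d}^\Theta_F$. Every $h\in\textup{Inj}_\Theta(V(F),[n])$ respects the labeling $\Theta$, so $h(i)=\Theta(i)$ for $i\in[t]$. Since the labeled vertices of $F$ induce the intersection type $T$, each term of $\mathsf{d}^\Theta_F$ therefore contains, as a distinguished factor, the polynomial
\[
\mathsf{q}_T^\Theta \;:=\; \prod_{\{i,j\}\in E(T)} \mathsf{x}_{\Theta(i)\Theta(j)} \prod_{\{i,j\}\in \binom{[t]}{2}\setminus E(T)}\bigl(1-\mathsf{x}_{\Theta(i)\Theta(j)}\bigr),
\]
which depends only on $T$ and $\Theta$ (not on the remaining unlabeled vertices of $F$ or on $h$). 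Hence $\mathsf{q}_T^\Theta$ can be factored out of every term, and similarly $\mathsf{q}_{T'}^\Theta$ divides every term of $\mathsf{d}^\Theta_{F'}$.

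The key step is now that $T\neq T'$ while the two types share the same labeled vertex set $[t]$. Therefore there is a pair $\{i,j\}\in\binom{[t]}{2}$ lying in exactly one of $E(T),E(T')$; without loss of generality $\{i,j\}\in E(T)\setminus E(T')$. Then the product $\mathsf{q}_T^\Theta \cdot \mathsf{q}_{T'}^\Theta$ contains the factor
\[
\mathsf{x}_{\Theta(i)\Theta(j)}\bigl(1-\mathsf{x}_{\Theta(i)\Theta(j)}\bigr),
\]
which lies in $\cI_n$. Consequently $\mathsf{q}_T^\Theta\cdot \mathsf{q}_{T'}^\Theta \equiv 0 \bmod \cI_n$, and since this polynomial divides every term of the expansion of $\mathsf{d}^\Theta_F\cdot \mathsf{d}^\Theta_{F'}$, the entire product is in $\cI_n$, i.e.\ vanishes as a function on $\mathcal{V}_n$.

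No step here should be a real obstacle: the content is simply recognizing that the labeled part of the two indicator-style products forces the underlying graph $G$ to induce both $T$ and $T'$ on the same vertex set $\Theta([t])$, which is impossible when $T\neq T'$. The cleanest way to phrase this algebraically is the explicit $\mathsf{x}(1-\mathsf{x})\in\cI_n$ argument above, which avoids invoking values on $\mathcal{V}_n$ and shows equality of polynomials modulo the ideal directly.
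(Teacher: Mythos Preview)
Your proof is correct and follows essentially the same approach as the paper: both arguments locate a pair $\{i,j\}$ that is an edge in one intersection type but not the other, and then observe that the factor $\mathsf{x}_{\Theta(i)\Theta(j)}(1-\mathsf{x}_{\Theta(i)\Theta(j)})\in\cI_n$ divides the product $\mathsf{d}_F^\Theta\mathsf{d}_{F'}^\Theta$. Your version is slightly more explicit in isolating the common factor $\mathsf{q}_T^\Theta$ from every summand of $\mathsf{d}_F^\Theta$, but the substance is identical.
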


\proof
Suppose $F$ is a $T$-flag, $F'$ is a $T'$-flag and $|E(T)|\geq |E(T')|$. Since $T \neq T'$, there exist
vertices $i$ and $j$ such that $\{i,j\}\in E(T)$ and $\{i,j\}\not\in E(T')$.
Therefore, there is some polynomial $\mathsf{q}$ such that
$$\mathsf{d}_F^\Theta\mathsf{d}_{F'}^\Theta=\mathsf{x}_{ij}(1-\mathsf{x}_{ij})\mathsf{q}=0.
$$
The last equality follows from the fact that $\mathsf{x}_{ij}(1-\mathsf{x}_{ij}) = 0$ on $\mathcal{V}_n$.
\qed

Note that the hypercube ideal $\mathcal{I}_n$ was crucial for the proof of Lemma~\ref{lem:orthogonality}.
Using this lemma, we now obtain a refined version of Corollary~\ref{cor:dsumsofsquares} (and similarly, Corollary~\ref{cor:gsumsofsquares}, which we omit). This will allow us to translate our $d$-sos expressions into flag sums of squares afterwards.

 \begin{proposition} \label{prop:new dsumsofsquares}
 Suppose $\mathsf{p}$ is symmetric and $d$-sos. For each intersection type $T$ of size $t \leq 2d$, fix $\Theta_T \in \textup{Inj}([t],[n])$.
 Then there exists psd matrices $R_T$ such that
 $$\mathsf{p} =  \sum_{t=0}^{2d} \sum_{T : |T| = t}
\textup{tr}\left(R_{T} \,\sym(\bm{\mathsf{d}}_{T} \bm{\mathsf{d}}_{T}^\top)\right),$$
where $\bm{\mathsf{d}}_T$ is the vector of polynomials
$\bm{\mathsf{d}}_F^{\Theta_T}$ such that $F \in \mathcal{F}_{T}^{2d}$.
 \end{proposition}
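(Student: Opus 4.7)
The plan is to start with Corollary~\ref{cor:dsumsofsquares} and reorganize the resulting sos expression by intersection type, using Lemma~\ref{lem:orthogonality} to kill all cross-terms between flags of different intersection type.

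First, I would exploit the freedom in choosing the tableaux $\tau_{\bm{\lambda}}$ in Corollary~\ref{cor:dsumsofsquares}. Pick them so that whenever $\lambda_1 = \lambda_1'$, the first rows of $\tau_{\bm{\lambda}}$ and $\tau_{\bm{\lambda}'}$ coincide (for instance, always place the labels $\{t+1,\dots,n\}$ in the first row, where $t = n-\lambda_1$). By the remark following Theorem~\ref{thm:span} this is legitimate, and it makes $\Theta_{\tau_{\bm{\lambda}}}$ depend only on $t$; declare $\Theta_T$ to be this common labeling for every intersection type $T$ of size $t$. With this choice, the spanning vector $\bm{\mathsf{d}}_{\tau_{\bm{\lambda}}}$ decomposes as the concatenation, over intersection types $T$ of size $t = n-\lambda_1$, of the sub-vectors $\bm{\mathsf{d}}_T$ demanded by the proposition.

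Next, I would establish that $\sym(\bm{\mathsf{d}}_{\tau_{\bm{\lambda}}}\bm{\mathsf{d}}_{\tau_{\bm{\lambda}}}^\top)$ is block-diagonal modulo $\mathcal{I}_n$, with blocks indexed by the intersection type. For flags $F, F'$ of different intersection types but the same size $t$, Lemma~\ref{lem:orthogonality} gives $\mathsf{d}_F^{\Theta_t}\mathsf{d}_{F'}^{\Theta_t}\in \mathcal{I}_n$. Since the $\fS_n$-action satisfies $\mathfrak{s}\cdot \mathsf{d}_F^{\Theta_t} = \mathsf{d}_F^{\mathfrak{s}\circ \Theta_t}$ (change of variables in the defining sum over $\textup{Inj}_{\Theta_t}$), the same lemma applied to the labeling $\mathfrak{s}\circ \Theta_t$ shows that every $\fS_n$-translate of this product lies in $\mathcal{I}_n$, and hence so does its average $\sym(\mathsf{d}_F^{\Theta_t}\mathsf{d}_{F'}^{\Theta_t})$. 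Therefore, if $R_{\bm{\lambda},T}$ denotes the principal submatrix of $R_{\bm{\lambda}}$ whose rows and columns are indexed by flags of intersection type $T$, we obtain
\begin{equation*}
\textup{tr}\bigl(R_{\bm{\lambda}}\, \sym(\bm{\mathsf{d}}_{\tau_{\bm{\lambda}}}\bm{\mathsf{d}}_{\tau_{\bm{\lambda}}}^\top)\bigr)\;\equiv\;\sum_{T\,:\,|T|=n-\lambda_1}\textup{tr}\bigl(R_{\bm{\lambda},T}\,\sym(\bm{\mathsf{d}}_T\bm{\mathsf{d}}_T^\top)\bigr)\pmod{\mathcal{I}_n},
\end{equation*}
and each $R_{\bm{\lambda},T}$ is psd as a principal submatrix of a psd matrix.

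Finally, I would exchange the order of summation. Recalling that $\bm{\lambda}\in \Lambda$ forces $n-\lambda_1\leq 2d$, so only intersection types of size at most $2d$ appear, set
\begin{equation*}
R_T := \sum_{\bm{\lambda}\in \Lambda\,:\,n-\lambda_1 = |T|} R_{\bm{\lambda},T},
\end{equation*}
which is psd as a sum of psd matrices. Substituting into Corollary~\ref{cor:dsumsofsquares} delivers the claimed identity. The only real content is the block-diagonalization in the middle step, and the obstacle there is conceptual rather than technical: one must notice that Lemma~\ref{lem:orthogonality} survives symmetrization because it holds for every labeling, not just $\Theta_t$, and that aligning the labelings across all $\bm{\lambda}$ with a common value of $\lambda_1$ is exactly what permits the row/column reindexing of the $R_{\bm{\lambda}}$ into a single $R_T$.
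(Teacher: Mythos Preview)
Your proposal is correct and follows essentially the same route as the paper: start from Corollary~\ref{cor:dsumsofsquares}, align the first rows of the tableaux so that $\Theta_{\tau_{\bm{\lambda}}}$ depends only on $n-\lambda_1$, use Lemma~\ref{lem:orthogonality} to block-diagonalize $\sym(\bm{\mathsf{d}}_{\tau_{\bm{\lambda}}}\bm{\mathsf{d}}_{\tau_{\bm{\lambda}}}^\top)$ by intersection type, then swap the sums and collect the principal submatrices into $R_T$. Your remark that the orthogonality survives symmetrization because Lemma~\ref{lem:orthogonality} holds for every labeling $\mathfrak{s}\circ\Theta_t$ is exactly the point the paper uses implicitly when it asserts $Z^{\tau_{\bm{\lambda}}}$ is block diagonal.
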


\proof Suppose $\mathsf{p}$ is symmetric and $d$-sos.
For each partition $\bm{\lambda} \in \Lambda$ (where $\Lambda=\{\bm{\lambda}|\bm{\lambda}\geq_{\textup{lex}}(n-2d,1^{2d})\}$), fix a tableau $\tau_{\bm{\lambda}}$ of shape $\bm{\lambda}=(\lambda_1,\lambda_2, \ldots)$. Then, by Corollary \ref{cor:dsumsofsquares}, there exist psd matrices $R_{\bm{\lambda}}$ such that
$$ \mathsf{p} =  \sum_{\bm{\lambda} \in \Lambda} \textup{tr}(R_{\bm{\lambda}} \, Z^{\tau_{\bm{\lambda}}}),$$
where
$Z^{\tau_{\bm{\lambda}}} =
\textup{sym}(\bm{\mathsf{d}}_{\tau_{\bm{\lambda}}}\bm{\mathsf{d}}_{\tau_{\bm{\lambda}}}^\top
)$ and $\bm{\mathsf{d}}_{\tau_{\bm{\lambda}}}$ is the vector containing all the
polynomials $\mathsf{d}^{\Theta_{\tau_{\bm{\lambda}}}}_F$ such that $F\in \mathcal{F}_T^{2d}$ where
$|T|=n-\lambda_1$.

For each intersection type $T$ with $|T|=n-\lambda_1$, let $\bm{\mathsf{d}}_{\tau_{\bm{\lambda}},T}$ be the
restriction of $\bm{\mathsf{d}}_{\tau_{\bm{\lambda}}}$ to entries corresponding to flags in $\mathcal{F}_{T}^{2d}$.  Similarly, let $R_{\bm{\lambda},T}$ be the
principal submatrix of $R_{\bm{\lambda}}$ corresponding to rows and columns indexed by flags in $\mathcal{F}_{T}^{2d}$. Since $R_{\bm{\lambda}}$
is psd, we get that $R_{\bm{\lambda},T}$ is psd for each $T$. From Lemma~\ref{lem:orthogonality},
we know that for any labeling $\Theta$,  if $F$ and $F'$ have different intersection types, then $\mathsf{d}_{F}^\Theta \mathsf{d}_{F'}^\Theta = 0$.
Hence, $Z^{\tau_{\bm{\lambda}}}_{F F'} = 0$ if $F$ and $F'$ have different intersection types, and so $Z^{\tau_{\bm{\lambda}}}$
is block diagonal, with one block for each intersection type. Thus, we have
\[ \mathsf{p} =
\sum_{\bm{\lambda}\in \Lambda} \sum_{\substack{T:\\ |T|=n-\lambda_1}}
\textup{tr}\left(R_{\bm{\lambda},T} \,\sym(\bm{\mathsf{d}}_{\tau_{\bm{\lambda}},T} \bm{\mathsf{d}}_{\tau_{\bm{\lambda}},T}^\top)\right).\]

Since $\lambda_1\geq n-2d$ for each partition $\bm{\lambda} \in \Lambda$ and $|T| = n - \lambda_1 \leq 2d$, by switching the sums, we may write the above expression as follows:

$$\mathsf{p} =  \sum_{\substack{T:\\0\leq |T| \leq 2d}} \sum_{\substack{\bm{\lambda}:\\ \lambda_1=n-|T|}}
\textup{tr}\left(R_{\bm{\lambda},T} \,\sym(\bm{\mathsf{d}}_{\tau_{\bm{\lambda}},T} \bm{\mathsf{d}}_{\tau_{\bm{\lambda}},T}^\top)\right).$$

Note that the vectors $ \bm{\mathsf{d}}_{\tau_{\bm{\lambda}},T}$ and $\bm{\mathsf{d}}_{\tau_{\bm{\lambda}'},T}$ are equal  if
$\textup{row}_1(\tau_{\bm{\lambda}}) = \textup{row}_1(\tau_{\bm{\lambda}'})$ since $\Theta_{\tau_{\bm{\lambda}}} =
\Theta_{\tau_{\bm{\lambda}'}}$. For every $\bm{\lambda}$ in the inner sum above, we could choose the first row of each
corresponding $\tau_{\bm{\lambda}}$ to be the same. Combining these two observations, we can reindex the vectors
$ \bm{\mathsf{d}}_{\tau_{\bm{\lambda}},T} $ by just $\bm{\mathsf{d}}_T$.
Therefore, the expression for $\mathsf{p}$ can be further rewritten as
$$\mathsf{p} =  \sum_{\substack{T:\\0\leq |T| \leq 2d}}
\textup{tr}\left(R_T \,\sym(\bm{\mathsf{d}}_T \bm{\mathsf{d}}_T^\top)\right) =
\sum_{t=0}^{2d} \sum_{T : |T| = t}
\textup{tr}\left(R_{T} \,\sym(\bm{\mathsf{d}}_{T} \bm{\mathsf{d}}_{T}^\top)\right),$$
where $R_T = \displaystyle{\sum_{\substack{\bm{\lambda}:\\ \lambda_1=n-|T|}}}   R_{\tau_{\bm{\lambda}},T}$ which is again psd.

\qed
\smallskip

The next step in getting to flag sos expressions is to switch from the operator $\textup{sym}(\cdot)$ to
the language of expectations. This  does not change the expressions we care about as we will see in
Lemma~\ref{lem:symvsexp}.

\begin{definition}
Fix $\Theta_0\in \textup{Inj}([t],[n])$, and $F,F' \in \mathcal{F}_T^f$ where $|T|=t$. We define
$$\mathbb{E}_{\Theta} [\mathsf{d}_F^{\Theta_0} \mathsf{d}_{F'}^{\Theta_0}] := \frac{1}{|\textup{Inj}([t],[n])|}\sum_{\Theta \in \textup{Inj}([t],[n])} \mathsf{d}^\Theta_F \mathsf{d}_{F'}^\Theta$$
\end{definition}

\begin{lemma}
Fix $\Theta_0\in \textup{Inj}([t],[n])$, and $F,F' \in \mathcal{F}_T^f$ where $|T|=t$, then
$$\textup{sym}(\mathsf{d}_F^{\Theta_0}\mathsf{d}_{F'}^{\Theta_0})=\frac{1}{|\mathfrak{S}_n|}\sum_{\mathfrak{s} \in \mathfrak{S}_n} \mathsf{d}_F^{\mathfrak{s}\cdot \Theta_0} \mathsf{d}_{F'}^{\mathfrak{s}\cdot \Theta_0},$$
where $\mathfrak{s}\cdot \Theta_0 (i) = \mathfrak{s}(\Theta_0(i))$.
\end{lemma}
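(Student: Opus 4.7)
The plan is to unfold the definition of $\textup{sym}(\cdot)$ and then show that the action of $\mathfrak{s}\in \mathfrak{S}_n$ on the flag polynomial $\mathsf{d}_F^{\Theta_0}$ simply relabels the labeling map, that is, $\mathfrak{s}\cdot \mathsf{d}_F^{\Theta_0} = \mathsf{d}_F^{\mathfrak{s}\cdot \Theta_0}$. Once this equivariance is in hand, the claim follows immediately because $\mathfrak{s}$ acts on a product of polynomials factor-by-factor, so
\[ \mathfrak{s}\cdot(\mathsf{d}_F^{\Theta_0}\mathsf{d}_{F'}^{\Theta_0}) = (\mathfrak{s}\cdot \mathsf{d}_F^{\Theta_0})(\mathfrak{s}\cdot \mathsf{d}_{F'}^{\Theta_0}) = \mathsf{d}_F^{\mathfrak{s}\cdot \Theta_0}\,\mathsf{d}_{F'}^{\mathfrak{s}\cdot \Theta_0}, \]
and averaging over $\mathfrak{s}\in \mathfrak{S}_n$ gives exactly the right-hand side of the stated identity.

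To verify the key equivariance $\mathfrak{s}\cdot \mathsf{d}_F^{\Theta_0} = \mathsf{d}_F^{\mathfrak{s}\cdot \Theta_0}$, I would use the combinatorial formula for $\mathsf{d}_F^\Theta$ established just above, namely
\[ \mathsf{d}_F^{\Theta_0} = \sum_{h\in \textup{Inj}_{\Theta_0}(V(F),[n])} \prod_{\{i,j\}\in E(F)} \mathsf{x}_{h(i)h(j)} \prod_{\{i,j\}\in \binom{V(F)}{2}\setminus E(F)} (1-\mathsf{x}_{h(i)h(j)}). \]
Applying $\mathfrak{s}$ replaces every variable $\mathsf{x}_{h(i)h(j)}$ by $\mathsf{x}_{\mathfrak{s}(h(i))\mathfrak{s}(h(j))}$. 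Then I would perform the change of variables $h' := \mathfrak{s}\circ h$. This is a bijection between $\textup{Inj}_{\Theta_0}(V(F),[n])$ and $\textup{Inj}_{\mathfrak{s}\cdot \Theta_0}(V(F),[n])$ because, for every labeled vertex $i$ of $F$, we have $h(i)=\Theta_0(i)$, hence $h'(i)=\mathfrak{s}(\Theta_0(i))=(\mathfrak{s}\cdot \Theta_0)(i)$, and conversely any $\mathfrak{s}\cdot \Theta_0$-respecting injection arises uniquely this way by left-multiplication by $\mathfrak{s}^{-1}$. After the substitution, the sum is precisely $\mathsf{d}_F^{\mathfrak{s}\cdot \Theta_0}$.

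Putting the pieces together, starting from the definition
\[ \textup{sym}(\mathsf{d}_F^{\Theta_0}\mathsf{d}_{F'}^{\Theta_0}) = \frac{1}{|\mathfrak{S}_n|}\sum_{\mathfrak{s}\in \mathfrak{S}_n} \mathfrak{s}\cdot(\mathsf{d}_F^{\Theta_0}\mathsf{d}_{F'}^{\Theta_0}), \]
the equivariance applied to each factor yields the desired formula. There is no real obstacle here; the content of the lemma is entirely a relabeling/change-of-variables argument, and the only subtlety to watch is that the map $h\mapsto \mathfrak{s}\circ h$ really does land in $\textup{Inj}_{\mathfrak{s}\cdot \Theta_0}(V(F),[n])$ and is a bijection, which is immediate from the definition of $\mathfrak{s}\cdot \Theta_0$.
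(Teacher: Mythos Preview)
Your argument is correct and is exactly what the paper's one-line proof (``This follows from the definitions of the operator $\textup{sym}$ and polynomials $\mathsf{d}_F^{\Theta}$'') is gesturing at: you simply make the implicit equivariance $\mathfrak{s}\cdot \mathsf{d}_F^{\Theta_0}=\mathsf{d}_F^{\mathfrak{s}\cdot\Theta_0}$ explicit via the change of summation variable $h\mapsto \mathfrak{s}\circ h$, and then use that the $\mathfrak{S}_n$-action respects products.
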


\proof This follows from the definitions of the operator $\textup{sym}$ and polynomials $\mathsf{d}_F^{\Theta}$. \qed

\begin{lemma}\label{lem:symvsexp}
Fix $\Theta_0\in \textup{Inj}([t],[n])$, and $F,F' \in \mathcal{F}_T^f$ where $|T|=t$, then
$$\textup{sym}(\mathsf{d}_F^{\Theta_0} \mathsf{d}_{F'}^{\Theta_0}) = \mathbb{E}_\Theta [\mathsf{d}_F^{\Theta_0} \mathsf{d}_{F'}^{\Theta_0}]$$
\end{lemma}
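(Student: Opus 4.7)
The plan is to use the preceding lemma, which rewrites $\textup{sym}(\mathsf{d}_F^{\Theta_0}\mathsf{d}_{F'}^{\Theta_0})$ as an average over $\mathfrak{S}_n$ of the polynomials $\mathsf{d}_F^{\mathfrak{s}\cdot \Theta_0} \mathsf{d}_{F'}^{\mathfrak{s}\cdot \Theta_0}$, and then regroup this sum according to the value of $\mathfrak{s}\cdot \Theta_0 \in \textup{Inj}([t],[n])$. The map $\mathfrak{s}\mapsto \mathfrak{s}\cdot \Theta_0$ from $\mathfrak{S}_n$ to $\textup{Inj}([t],[n])$ is surjective: given any $\Theta \in \textup{Inj}([t],[n])$, one can choose a permutation taking $\Theta_0(i)$ to $\Theta(i)$ for $i=1,\ldots,t$ and completing arbitrarily on the complement.

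The key step is to count the fibers of this map. Two permutations $\mathfrak{s}_1,\mathfrak{s}_2$ satisfy $\mathfrak{s}_1\cdot \Theta_0 = \mathfrak{s}_2 \cdot \Theta_0$ precisely when $\mathfrak{s}_2^{-1}\mathfrak{s}_1$ fixes each element of $\Theta_0([t])$. The stabilizer of $\Theta_0([t])$ pointwise has size $(n-t)!$, so every fiber has exactly $(n-t)!$ elements. Combined with $|\textup{Inj}([t],[n])| = n!/(n-t)!$, this yields
\begin{align*}
\textup{sym}(\mathsf{d}_F^{\Theta_0}\mathsf{d}_{F'}^{\Theta_0})
&= \frac{1}{n!}\sum_{\mathfrak{s} \in \mathfrak{S}_n} \mathsf{d}_F^{\mathfrak{s}\cdot \Theta_0} \mathsf{d}_{F'}^{\mathfrak{s}\cdot \Theta_0}
= \frac{1}{n!}\sum_{\Theta \in \textup{Inj}([t],[n])} (n-t)! \,\mathsf{d}_F^{\Theta}\mathsf{d}_{F'}^{\Theta} \\
&= \frac{1}{|\textup{Inj}([t],[n])|}\sum_{\Theta \in \textup{Inj}([t],[n])} \mathsf{d}_F^{\Theta}\mathsf{d}_{F'}^{\Theta}
= \mathbb{E}_{\Theta}[\mathsf{d}_F^{\Theta_0}\mathsf{d}_{F'}^{\Theta_0}],
\end{align*}
which is exactly the desired identity.

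There isn't really a main obstacle here: once the previous lemma is in hand, this is essentially a bookkeeping exercise in passing from an average over $\mathfrak{S}_n$ to an average over the quotient $\mathfrak{S}_n / \textup{Stab}(\Theta_0([t]))$, which is in bijection with $\textup{Inj}([t],[n])$. The only subtlety worth double-checking is that $\mathsf{d}_F^{\mathfrak{s}\cdot\Theta_0}$ depends on $\mathfrak{s}$ only through $\mathfrak{s}\cdot\Theta_0$, which is immediate from the definition of $\mathsf{d}_F^{\Theta}$, since $\Theta$ enters only via the constraint $h(i) = \Theta(i)$ defining $\textup{Inj}_{\Theta}(V(F),[n])$.
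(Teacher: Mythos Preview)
Your proof is correct and follows essentially the same approach as the paper: the paper's one-line argument simply cites the two facts $|\textup{Inj}([t],[n])|=\binom{n}{t}t!$ and that each $\Theta$ has exactly $(n-t)!$ preimages under $\mathfrak{s}\mapsto \mathfrak{s}\cdot\Theta_0$, which is precisely the fiber count you carry out in detail. Your write-up is just a more explicit version of the same reasoning.
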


\proof
This follows from the fact that $|\textup{Inj}([t],[n])|=\binom{n}{t}t!$ and that, for each $\Theta\in \textup{Inj}([t],[n])$, there are $(n-t)!$ permutations $\mathfrak{s}\in \mathfrak{S}_n$ such that $\mathfrak{s}\cdot \Theta_0=\Theta$.
\qed

\begin{definition} \label{def:flag sos}
Let $\bm{\mathsf{d}}^{\Theta,T,f}=(\mathsf{d}^\Theta_F)_{F\in \mathcal{F}_T^f}$ be the vector of flag polynomials for a  fixed intersection type $T$, flag size $f$, and labeling $\Theta$. A {\em flag sos} is a sos expression of the form
\begin{align*} \label{eq:raz_sos}
\sum_{T,f} \textup{tr}\left(R_{T,f} \, \mathbb{E}_{\Theta}  \left[\bm{\mathsf{d}}^{\Theta,T,f}{\bm{\mathsf{d}}^{\Theta,T,f}}^\top\right]\right),
\end{align*}
where the sum is indexed by some chosen $T$ and $f$, and the matrices $R_{T,f}$ are psd. This expression is called  {\em $f_{\textup{max}}$-flag sos} if every flag present has size at most $f_{\textup{max}}$.
\end{definition}

Note that the main difference between the sos expression in Corollary~\ref{cor:dsumsofsquares} and a flag sos expression is that each summand in the latter uses only one intersection type.  However, we saw in Proposition~\ref{prop:new dsumsofsquares} that we can refine the sos in Corollary~\ref{cor:dsumsofsquares} to have the same property.  The only step left is to bring in the language of expectations to obtain flag sos expressions.

\begin{theorem} \label{thm:GP=Razborov}
If $\mathsf{p}$ is symmetric and $d$-sos, then $\mathsf{p}$ is also a $2d$-flag sos.
\end{theorem}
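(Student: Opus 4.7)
The plan is to combine two already-established results: the refined sos expression from Proposition~\ref{prop:new dsumsofsquares}, which decomposes $\mathsf{p}$ into a sum indexed by intersection types $T$ of size at most $2d$, and Lemma~\ref{lem:symvsexp}, which identifies $\textup{sym}(\mathsf{d}_F^{\Theta_T}\mathsf{d}_{F'}^{\Theta_T})$ with $\mathbb{E}_\Theta[\mathsf{d}_F^{\Theta_T}\mathsf{d}_{F'}^{\Theta_T}]$. Comparing Proposition~\ref{prop:new dsumsofsquares} to Definition~\ref{def:flag sos}, the only gap between a ``refined Gatermann--Parrilo sos'' and a ``flag sos'' is this change of language from symmetrization to expectation over labelings; the block-by-intersection-type structure required by Definition~\ref{def:flag sos} has already been achieved in Proposition~\ref{prop:new dsumsofsquares} using the orthogonality in Lemma~\ref{lem:orthogonality}.

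Concretely, I would start with the expression
\[
\mathsf{p} = \sum_{t=0}^{2d}\sum_{T:|T|=t} \textup{tr}\bigl(R_T\,\sym(\bm{\mathsf{d}}_T\bm{\mathsf{d}}_T^\top)\bigr)
\]
supplied by Proposition~\ref{prop:new dsumsofsquares}, where $\bm{\mathsf{d}}_T$ collects the polynomials $\mathsf{d}_F^{\Theta_T}$ with $F\in \mathcal{F}_T^{2d}$ for some fixed $\Theta_T\in \textup{Inj}([t],[n])$. The $(F,F')$-entry of the matrix $\sym(\bm{\mathsf{d}}_T\bm{\mathsf{d}}_T^\top)$ is exactly $\sym(\mathsf{d}_F^{\Theta_T}\mathsf{d}_{F'}^{\Theta_T})$, so by Lemma~\ref{lem:symvsexp} it equals $\mathbb{E}_\Theta[\mathsf{d}_F^{\Theta_T}\mathsf{d}_{F'}^{\Theta_T}]$. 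Replacing each entry yields
\[
\mathsf{p} = \sum_{t=0}^{2d}\sum_{T:|T|=t}\textup{tr}\Bigl(R_T\,\mathbb{E}_\Theta\bigl[\bm{\mathsf{d}}^{\Theta,T,2d}{\bm{\mathsf{d}}^{\Theta,T,2d}}^\top\bigr]\Bigr),
\]
which is, by inspection, a flag sos in the sense of Definition~\ref{def:flag sos} with flag size parameter $f = 2d$ for every summand. Since every flag appearing has size exactly $2d$, this is a $2d$-flag sos.

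There is no substantial obstacle: all the real work lies upstream in Proposition~\ref{prop:new dsumsofsquares} (which required Lemma~\ref{lem:orthogonality} and the hypercube ideal) and in Lemma~\ref{lem:symvsexp}. The theorem itself is essentially a packaging statement that recognizes the output of the tailored Gatermann--Parrilo method as a flag sos, once the bookkeeping switch from $\sym(\cdot)$ to $\mathbb{E}_\Theta[\cdot]$ is made. I would therefore keep the proof very short, citing both results and performing the entrywise substitution.
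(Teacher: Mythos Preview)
Your proposal is correct and follows essentially the same approach as the paper: invoke Proposition~\ref{prop:new dsumsofsquares} to obtain the block-by-intersection-type expression, then apply Lemma~\ref{lem:symvsexp} entrywise to replace $\sym(\cdot)$ by $\mathbb{E}_\Theta[\cdot]$ and recognize the result as a $2d$-flag sos. Your commentary that the real work lies upstream is accurate; the paper's proof is correspondingly brief.
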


\proof
By Proposition~\ref{prop:new dsumsofsquares}, if $\mathsf{p}$ is symmetric and $d$-sos, then
$$\mathsf{p} =  \sum_{t=0}^{2d} \sum_{T : |T| = t}
\textup{tr}\left(R_{T} \,\sym(\bm{\mathsf{d}}_{T} \bm{\mathsf{d}}_{T}^\top)\right),$$
where $\bm{\mathsf{d}}_T$ is the vector of polynomials
$\bm{\mathsf{d}}_F^{\Theta_T}$ such that $F \in \mathcal{F}_{T}^{2d}$ and $\Theta_T \in \textup{Inj}([t],[n])$. By Lemma~\ref{lem:symvsexp}, we then have that
$$\mathsf{p} =  \sum_{t=0}^{2d} \sum_{T : |T| = t}
\textup{tr}\left(R_T\mathbb{E}_{\Theta}[\bm{\mathsf{d}}_{T} \bm{\mathsf{d}}_{T}^\top]\right).$$
\qed

In \cite{RST}, we proved that any flag sos polynomial of degree $2d$ can be written as a sos coming from $\mathbb{R}[\mathsf{x}]_{\leq d}$ using the techniques of \cite{GatermannParrilo}. We have now shown that the reverse is also true in the sense that any symmetric sos over $\mathcal{V}_n$ can also be written as a flag sos. Thus flag methods are equivalent to general methods for finding sos expressions over $\mathcal{V}_n$.

% !TEX root =  main.tex
\section{Examples} \label{sec:examples}

In this section, we illustrate our methods on two examples. 
The first is a sos proof of an upper bound on the edge density of a $n$-vertex graph that does not contain any $4$-cycles, and the second is a sos proof that the Ramsey number $R(3,3)=6$. These example illustrates various features of our method. The first is
that we can work with additional constraints beyond those defining the discrete
hypercube.  The second is that in both examples, $n$ is fixed, and hence they showcase the merits of
considering flags in a finite setting. Thirdly, the polynomials $\mathsf{g}_F^\Theta$ and $\mathsf{d}_F^\Theta$ that appear in these proofs are inspired by the combinatorics of the problems at hand. Normally, the bases used in symmetry-reduction, 
such as the Gatermann-Parrilo method, does not preserve this sort of combinatorial information. 

\subsection{Avoiding $4$-cycles}\label{sec:4cycle}
In \cite{KST54}, K\H ovari, S\'os and Tur\'an proved that the number of edges in a $n$-vertex graph not containing $C_4$, the cycle on four vertices, is bounded above by $\frac12 n^{3/2}+O(n)$. Observe that the bound implies that the edge density of an extremal graph in this setting tends to zero, and thus the standard application of the flag algebra method cannot recover the precise lower order term. We give a succinct sos proof for this result using our methods which follows the proof outline in \cite{KST54}.

\begin{theorem}\cite{KST54}
Let $G$ be a $n$ vertex graph not containing a $C_4$. Then the number of edges in $G$ is at most $\frac{1}{2} n^{\frac{3}{2}}+ O(n)$.
\end{theorem}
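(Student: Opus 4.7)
The plan is to mirror the classical Kővári–Sós–Turán argument, which double-counts cherries (paths of length two) and combines this with convexity, and to package both steps as flag sos expressions modulo the $C_4$-free constraint together with the hypercube relations. Concretely, I work on $\mathcal{V}_n$ augmented by the ideal generated by $\mathsf{x}_{ij}\mathsf{x}_{jk}\mathsf{x}_{kl}\mathsf{x}_{li}$ over all distinct $4$-tuples $i,j,k,l$ (in each of the three cyclic orderings on a labeled $4$-set), and aim to exhibit an sos certificate for $\tfrac{1}{2}n^{3/2}+Cn-\sum_{i<j}\mathsf{x}_{ij}$ for a suitable constant $C$.

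For the cherry upper bound, for each pair $\{i,k\}$ I introduce $Y_{ik}:=\sum_{j\neq i,k}\mathsf{x}_{ij}\mathsf{x}_{jk}$, the number of common neighbors. Expanding $Y_{ik}^2$, the diagonal terms collapse via $\mathsf{x}_{ij}^2\equiv \mathsf{x}_{ij}\bmod \mathcal{I}_n$, and the off-diagonal terms $\mathsf{x}_{ij}\mathsf{x}_{jk}\mathsf{x}_{il}\mathsf{x}_{lk}$ (for $j\neq l$) are precisely the $4$-cycle monomials and hence vanish modulo the $C_4$-free constraints. Therefore $Y_{ik}^2\equiv Y_{ik}$, so $(1-Y_{ik})^2\equiv 1-Y_{ik}$, exhibiting $1-Y_{ik}$ as a single square on the constrained variety. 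Summing over the pairs $\{i,k\}$ produces a flag sos certificate for $\binom{n}{2}-\sum_{j}\binom{d_j}{2}\geq 0$; in the language of Section~\ref{sec:spanning sets} this corresponds to flags on $4$ vertices with intersection type a labeled edge (or pair), summed against a psd coefficient matrix supported on the cherry-type flags.

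Next, I would certify the convexity inequality $\sum_j d_j^2\geq (\sum_j d_j)^2/n$ directly as a sos of degree deviations, using the manifest identity $\tfrac{1}{2n}\sum_{1\le j<l\le n}(d_j-d_l)^2=\sum_j d_j^2-\tfrac{1}{n}\big(\sum_j d_j\big)^2$. Each $d_j=\sum_{i\neq j}\mathsf{x}_{ij}$ is a linear combination of edge variables, so the right-hand side is a sum of squares of degree-one polynomials; in flag notation this is a $3$-flag sos (flags of size $3$ over a single labeled vertex $\Theta$), and its expansion gives $2\sum_j\binom{d_j}{2}+2|E|\geq 4|E|^2/n$, i.e., $\sum_j\binom{d_j}{2}\geq 2|E|^2/n-|E|$.

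Chaining the two certificates gives $\tfrac{2}{n}|E|^2-|E|\leq \tfrac{1}{2}n(n-1)$, a quadratic inequality whose positive root is $\tfrac14 n(1+\sqrt{4n-3})\leq \tfrac12 n^{3/2}+O(n)$; to turn this chain into a single sos certificate for $\tfrac{1}{2}n^{3/2}+Cn-|E|$ I would use the standard trick of multiplying the quadratic by a nonnegative linear combination and completing a square in $|E|$ against the target linear value $\alpha\approx \tfrac{1}{2}n^{1/2}$ (i.e., use $\big(|E|-\alpha n\big)^2\geq 0$ with $\alpha$ chosen to cancel the $|E|^2$ term). The main obstacle I anticipate is less conceptual than bookkeeping: one must carefully track the combinatorial coefficients when rewriting the cherry sum $\sum_j\binom{d_j}{2}$ and the edge count $|E|$ in terms of the flag polynomials $\mathsf{g}^{\Theta}_{F}$ for the appropriate intersection types and flag sizes, and verify that the final psd matrices $R_{T,f}$ from Definition~\ref{def:flag sos} assemble to a single, genuinely succinct flag sos whose size is independent of $n$.
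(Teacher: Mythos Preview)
Your proposal is correct and takes essentially the same approach as the paper: the paper's two squares are exactly $n(1-Y_{jk})^2$ (your cherry bound, reduced via $Y_{jk}^2\equiv Y_{jk}$ from the $C_4$ generators) and $\tfrac12(d_j-d_k)^2$ (your convexity step), both taken with the same two-labeled-vertex intersection type rather than a single labeled vertex. The only packaging difference is that the paper combines them directly into a single sos certificate for the quadratic $n+\tfrac{2}{n-1}\mathsf{s}-\tfrac{2}{\binom{n}{2}}\mathsf{s}^2$ and then solves the resulting inequality algebraically, so your final completing-the-square step to obtain a linear certificate in $|E|$ is unnecessary for the stated theorem.
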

\begin{proof}
Fix $n$ and make the following definitions:
\begin{align*}
\mathsf{s}&:=  \sum_{1\leq i <j\leq n} \mathsf{x}_{ij}, \textup{ and }\\
\mathcal{I}&:=\langle \mathsf{x}_{ij}^2-\mathsf{x}_{ij} \, \forall 1\leq i< j\leq n, \, \mathsf{x}_{ij}\mathsf{x}_{ik}\mathsf{x}_{lj}\mathsf{x}_{lk} \, \forall i,j,k,l\in [n]\rangle.
\end{align*}
Note that the variety of $\mathcal{I}$ is the subset of $\mathcal{V}_n$ that consists of characteristic vectors of $n$-vertex graphs that avoid a $C_4$. We will show that
$n + \frac{2}{n-1}\mathsf{s} - \frac{2}{\binom{n}{2}} \mathsf{s}^2$ is $2$-sos modulo the ideal $\mathcal{I}$. This will imply that $\mathsf{s} \leq \frac{n+\sqrt{4n^3-3n^2}}{4}$ for all $C_4$-free graphs on $n$ vertices
giving the bound claimed in the theorem.

We first consider the spanning set given by the $\mathsf{g}_F^\Theta$ polynomials.
Consider flags of size three with the intersection type $\labeledvnonedge{1}{2}$, and let $\Theta_{jk}\in \textup{Inj}([2],[n])$ be such that $\Theta(1)=j$ and $\Theta(2)=k$. Then, 

\begin{align*}
\mathsf{g}^{\Theta_{jk}}_{\labeledHzero{}{1}{2}}=n-2,\qquad \qquad  &\mathsf{g}^{\Theta_{jk}}_{\labeledHone{}{1}{2}}= \sum_{i \in [n]\backslash\{j,k\}} \mathsf{x}_{ij},\\
 \mathsf{g}^{\Theta_{jk}}_{\labeledHoneright{}{1}{2}}= \sum_{i\in [n]\backslash\{j,k\}} \mathsf{x}_{ik}, \qquad \qquad & \mathsf{g}^{\Theta_{jk}}_{\labeledHtwo{}{1}{2}}= \sum_{i\in [n]\backslash\{j,k\}} \mathsf{x}_{ij}\mathsf{x}_{ik}.
\end{align*}
We now claim that the following sos certificate is correct and thus prove the theorem.
\begin{align*}
n + \frac{2}{n-1}\mathsf{s} &- \frac{2}{\binom{n}{2}} \mathsf{s}^2\\
&\equiv \mathbb{E}_{\Theta_{jk}} \left[ n\left(\frac{1}{n-2} \mathsf{g}^{\Theta_{jk}}_{\labeledHzero{}{1}{2}}-\mathsf{g}^{\Theta_{jk}}_{\labeledHtwo{}{1}{2}}\right)^2 +\frac{1}{2}\left(\mathsf{g}^{\Theta_{jk}}_{\labeledHone{}{1}{2}}-\mathsf{g}^{\Theta_{jk}}_{\labeledHoneright{}{1}{2}}\right)^2\right] \textrm{  mod }\mathcal{I}.
\end{align*}

The above expression follows from the following equalities.
First, for each $j\neq k$, we have the following equivalence:
\begin{align*}
&\left(\frac{1}{n-2} \mathsf{g}^{\Theta_{jk}}_{\labeledHzero{}{1}{2}}-\mathsf{g}^{\Theta_{jk}}_{\labeledHtwo{}{1}{2}}\right)^2\\
& \,\,\,\,\,\,\,\,\,= \left(1-\sum_{i\in [n]\backslash\{j,k\}} \mathsf{x}_{ij}\mathsf{x}_{ik}\right)^2 \\
&\,\,\,\,\,\,\,\,\, =1+\sum_{i\in [n]\backslash\{j,k\}} \mathsf{x}_{ij}^2\mathsf{x}_{ik}^2 -2 \sum_{i\in [n]\backslash\{j,k\}} \mathsf{x}_{ij}\mathsf{x}_{ik} + \sum_{i\neq l:i,l\in [n]\backslash\{j,k\}} \mathsf{x}_{ij}\mathsf{x}_{ik}\mathsf{x}_{lj}\mathsf{x}_{lk}\\
&\,\,\,\,\,\,\,\,\, \equiv \left(1-\sum_{i\in [n]\backslash\{j,k\}} \mathsf{x}_{ij}\mathsf{x}_{ik}\right) \textrm{ mod } \mathcal{I}.\\
\end{align*}
Thus,
\begin{align*}
\mathbb{E}_{\Theta_{jk}} &\left[ n\left(\frac{1}{n-2} \mathsf{g}^{\Theta_{jk}}_{\labeledHzero{}{1}{2}}-\mathsf{g}^{\Theta_{jk}}_{\labeledHtwo{}{1}{2}}\right)^2 \right] 
\equiv n-\frac{1}{n-1}\sum_{j,k\in [n],j\neq k}\sum_{i\in [n]\backslash\{j,k\}} \mathsf{x}_{ij}\mathsf{x}_{ik}\textrm{ mod } \mathcal{I}.
\end{align*}

Next, 
\begin{align*}
&\mathbb{E}_{\Theta_{jk}} \left[\frac{1}{2}\left(\mathsf{g}^{\Theta_{jk}}_{\labeledHone{}{1}{2}}-\mathsf{g}^{\Theta_{jk}}_{\labeledHoneright{}{1}{2}}\right)^2\right]\\
&\,\,\,\,\,\,\,\,\,=\frac{1}{n(n-1)} \sum_{\substack{j,k\in [n] :\\ j\neq k}} \frac{1}{2} \left(\sum_{i\in [n]\backslash\{j\}} \mathsf{x}_{ij}-\sum_{i\in [n]\backslash\{k\}}\mathsf{x}_{ik}\right)^2\\
&\,\,\,\,\,\,\,\,\,=\frac{1}{n(n-1)} \left( n \sum_{j\in [n]} \left(\sum_{i\in [n]\backslash\{j\}} \mathsf{x}_{ij}\right)^2- \left(\sum_{\substack{i,j\in [n] :\\ i\neq j}} \mathsf{x}_{ij}\right)^2\right)\\
&\,\,\,\,\,\,\,\,\,=\frac{1}{n-1}\sum_{\substack{j,k\in [n]:\\j\neq k}} \left(\sum_{i\in [n]\backslash\{j,k\}} \mathsf{x}_{ij}\mathsf{x}_{ik} \right)+ \frac{1}{n-1}2\mathsf{s}-\frac{1}{n(n-1)} 4\mathsf{s}^2.
\end{align*}

In order to check the above equations, it helps to think of $\sum_{i\in [n]\backslash\{j\}} \mathsf{x}_{ij}$ for a fixed $j$ as the degree of vertex $j$, and $\sum_{i,j\in [n] : i\neq j} \mathsf{x}_{ij}$ as the sum of the degrees of all vertices. Combining the two equalities above, yields the desired sos expression.

Similarly, for the $\mathsf{d}_F^\Theta$'s, we again consider flags of size three with the intersection types $\labeledvnonedge{1}{2}$ and $\labeledvedge{1}{2}$. We also again let $\Theta_{jk}\in \textup{Inj}([2],[n])$ be such that $\Theta(1)=j$ and $\Theta(2)=k$. This gives us the following sos expression
\begin{align*}
n + \frac{2}{n-1} \mathsf{s} &- \frac{2}{\binom{n}{2}} \mathsf{s}^2\equiv\\
\mathbb{E}_{\Theta_{jk}} \Bigg[& {n}\left(\mathsf{d}^{\Theta_{jk}}_{\labeledHzero{}{1}{2}} + \mathsf{d}^{\Theta_{jk}}_{\labeledHone{}{1}{2}} + \mathsf{d}^{\Theta_{jk}}_{\labeledHoneright{}{1}{2}}\right)^2 + {n}\left(\mathsf{d}^{\Theta_{jk}}_{\labeledHonebottom{}{1}{2}} +\mathsf{d}^{\Theta_{jk}}_{\labeledHtwoleft{}{1}{2}}+\mathsf{d}^{\Theta_{jk}}_{\labeledHtworight{}{1}{2}}\right)^2\\
& + \frac{1}{2}\left(\mathsf{d}^{\Theta_{jk}}_{\labeledHone{}{1}{2}}-\mathsf{d}^{\Theta_{jk}}_{\labeledHoneright{}{1}{2}}\right)^2 + \frac{1}{2}\left(\mathsf{d}^{\Theta_{jk}}_{\labeledHtwoleft{}{1}{2}}-\mathsf{d}^{\Theta_{jk}}_{\labeledHtworight{}{1}{2}}\right)^2 \Bigg] \textrm{ mod } \mathcal{I},
\end{align*}
where each summand involves a single intersection type. 

\end{proof}

\subsection{A Ramsey example}\label{sec:ramsey}

Next we give a sos proof that the Ramsey number
$R(3,3)=6$.  The Ramsey $R(r,s)$ problem consists of finding the minimum number of vertices $n$ such that it is impossible to color all the edges of the complete graph $K_n$ blue and red without creating either a blue clique of size $r$ or a red clique of size $s$~\cite{Radziszowski94}. Ramsey's theorem states that this number is finite for any $r,s\in \mathbb{Z}$. Several values of $R(r,s)$ are known. For example, $R(3,3)=6$: no matter how one colors the edges of $K_6$, one is guaranteed to have a monochromatic triangle.

We consider the following model. We let $\mathsf{x}_e$ be one if edge $e$ is colored red and zero if edge $e$ is colored blue; $\mathsf{x}\in \mathbb{R}^{\binom{6}{2}}$. Consider the ideal
\begin{align*}
\mathcal{I} = \langle \mathsf{x}_e^2-\mathsf{x}_e \,\, \forall  e\in E(K_6) \rangle & +  \langle \mathsf{x}_{ij}\mathsf{x}_{ik}\mathsf{x}_{jk} \,\,\forall i<j<k\in [6] \rangle\\
 & +  \langle (1-\mathsf{x}_{ij})(1-\mathsf{x}_{ik})(1-\mathsf{x}_{jk}) \,\, \forall i<j<k\in [6] \rangle.
\end{align*}
The variety of $\mathcal{I}$ consists of those vectors in $\{0,1\}^{6 \choose 2}$ that record a red/blue coloring of $K_6$ without monochromatic triangles since the first part ensures that each
$x_e$ is either $0$ or $1$, the second part, that no triangle is red, and the third part, that no triangle is blue.
Showing that the variety of $\mathcal{I}$ is empty will prove that $R(3,3) \leq 6$. The coloring of $K_5$ below establishes that $R(3,3) > 5$.

\newdimen\R
\R=0.8cm

\begin{center}\begin{tikzpicture}
        \draw[xshift=5.0\R, blue] (0:\R) \foreach \x in {72,144,...,359} {
            -- (\x:\R)
        } -- cycle (90:\R)  ;
   \draw[xshift=5.0\R, red, dashed] (72:\R) \foreach \x in {216,359,144,288,72} {
            -- (\x:\R)
        } ;
\end{tikzpicture}\end{center}

To prove that the variety of $\mathcal{I}$ is empty, it suffices to show that $-1$ is equivalent to a sos mod $\mathcal{I}$. We now provide sos expressions that do this job in our setting.

Consider the $\mathsf{d}_F^\Theta$'s from the the following flags: $\vedge$, $\vnonedge$, $\labeledvedge{}{1}$ and $\labeledvnonedge{}{1}$. Note that for the first two, $\Theta$ is irrelevant and unique since the intersection type is empty. For the last two, we assume that $\Theta_i$ sends $1$ to vertex $i$. We have

$$\mathsf{d}^\Theta_{\vedge}=2\sum_{1\leq i<j \leq 6} \mathsf{x}_{ij}, \quad \quad \mathsf{d}^\Theta_{\vnonedge}=2\sum_{1\leq i < j \leq 6} (1-\mathsf{x}_{ij}),$$
$$\mathsf{d}^{\Theta_i}_{\labeledvedge{}{1}}=\sum_{j\in [6]\backslash\{i\}} \mathsf{x}_{ij}, \quad \quad \mathsf{d}^{\Theta_i}_{\labeledvnonedge{}{1}}=\sum_{j\in[6]\backslash\{i\}}^6 (1-\mathsf{x}_{ij}).$$

Similarly, for the $\mathsf{g}_F^\Theta$'s, we need the flags: $\vnonedge$, $\labeledvedge{}{1}$ and $\labeledvnonedge{}{1}$. Note that for the first one, $\Theta$ is irrelevant and unique since the intersection type is empty. For the last two, we assume that $\Theta_i$ sends $1$ to vertex $i$. We have

$$\mathsf{g}^\Theta_{\vnonedge}=30, \quad \quad \mathsf{g}^{\Theta_i}_{\labeledvedge{}{1}}=\sum_{j\in [6]\backslash\{i\}} \mathsf{x}_{ij}, \quad \quad \mathsf{g}^{\Theta_i}_{\labeledvnonedge{}{1}}=5.$$

\begin{theorem} \label{thm:d,g, proof of Ramsey}
The following expressions
$$\frac{1}{8\binom{6}{2}^2} \left( \mathsf{d}^\Theta_{\vedge} + \mathsf{d}^\Theta_{\vnonedge}\right)^2 + \mathbb{E}_{\Theta_i}\left[\frac{1}{2}\left(\mathsf{d}^{\Theta_i}_{\labeledvedge{}{\textup{1}}}-\mathsf{d}^{\Theta_i}_{\labeledvnonedge{}{\textup{1}}} \right)^2\right] $$
$$\textup{ and } \frac{1}{2}\left(\frac{1}{30} \mathsf{g}^\Theta_{\vnonedge}\right)^2 + \mathbb{E}_{\Theta_i}\left[\left(\sqrt{2}\mathsf{g}^{\Theta_i}_{\labeledvedge{}{\textup{1}}}-\frac{1}{\sqrt{2}}\mathsf{g}^{\Theta_i}_{\labeledvnonedge{}{\textup{1}}}\right)^2  \right]$$
are each equivalent to $-1$ modulo $\mathcal{I}$ and hence the variety of $\mathcal{I}$ is empty.
\end{theorem}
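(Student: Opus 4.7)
The plan is to reduce both expressions modulo $\mathcal{I}$ using the explicit formulas for the flag polynomials already tabulated just before the theorem. I would first observe that the two ``edge plus non-edge'' sums telescope to constants:
\[
\mathsf{d}^\Theta_{\vedge} + \mathsf{d}^\Theta_{\vnonedge} = 2\sum_{i<j}\mathsf{x}_{ij} + 2\sum_{i<j}(1-\mathsf{x}_{ij}) = 2\binom{6}{2}=30, \qquad \mathsf{g}^\Theta_{\vnonedge}=30,
\]
so the first summand of each expression is simply $\tfrac{1}{2}$. Writing $D_i := \sum_{j\neq i}\mathsf{x}_{ij}$ for the ``red-degree'' polynomial at vertex $i$, the tabulated formulas give $\mathsf{d}^{\Theta_i}_{\labeledvedge{}{\textup{1}}} - \mathsf{d}^{\Theta_i}_{\labeledvnonedge{}{\textup{1}}} = 2D_i - 5$ and $\sqrt{2}\,\mathsf{g}^{\Theta_i}_{\labeledvedge{}{\textup{1}}} - \tfrac{1}{\sqrt{2}}\,\mathsf{g}^{\Theta_i}_{\labeledvnonedge{}{\textup{1}}} = (2D_i - 5)/\sqrt{2}$. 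After squaring and applying $\mathbb{E}_{\Theta_i}[\,\cdot\,] = \tfrac{1}{6}\sum_{i=1}^{6}(\cdot)$, both expressions reduce to $\tfrac{1}{2}$ plus a specific scalar multiple of $\sum_{i=1}^{6}(2D_i - 5)^2$.

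The core of the proof is to show that $\sum_{i=1}^{6}(2D_i - 5)^2$ is equivalent to a negative integer modulo $\mathcal{I}$. Using the hypercube generators $\mathsf{x}_{ij}^2 - \mathsf{x}_{ij}$, I expand $D_i^2 \equiv D_i + 2P_i$, where $P_i := \sum_{j<k,\; j,k\neq i}\mathsf{x}_{ij}\mathsf{x}_{ik}$ counts ``red cherries'' centered at $i$. The other two families of generators combine to give the key three-edge identity
\[
\mathsf{x}_{ij}\mathsf{x}_{ik} + \mathsf{x}_{ij}\mathsf{x}_{jk} + \mathsf{x}_{ik}\mathsf{x}_{jk} \equiv \mathsf{x}_{ij} + \mathsf{x}_{ik} + \mathsf{x}_{jk} - 1 \pmod{\mathcal{I}},
\]
obtained by expanding $(1 - \mathsf{x}_{ij})(1 - \mathsf{x}_{ik})(1 - \mathsf{x}_{jk})$ and killing the cubic term with $\mathsf{x}_{ij}\mathsf{x}_{ik}\mathsf{x}_{jk}$. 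Summing this identity over all $\binom{6}{3} = 20$ triples, using that each edge lies in $n-2 = 4$ triples and each unordered cherry in exactly one triple, yields $\sum_i P_i \equiv 4\mathsf{s} - 20 \pmod{\mathcal{I}}$ with $\mathsf{s} := \sum_{i<j}\mathsf{x}_{ij}$. Together with the identity $\sum_i D_i = 2\mathsf{s}$, this gives $\sum_i D_i^2 \equiv 10\mathsf{s} - 40$, and hence
\[
\sum_{i=1}^{6}(2D_i-5)^2 \equiv 4(10\mathsf{s}-40) - 20(2\mathsf{s}) + 150 \equiv -10 \pmod{\mathcal{I}}.
\]

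Plugging this negative scalar back into each expression, together with the constant $\tfrac{1}{2}$ from the first summand, and tracking the remaining normalizations (the $\tfrac{1}{2}$ inside the $\mathsf{d}$-expectation, the factor $\tfrac{1}{2}$ coming from $(\sqrt{2}\,\mathsf{g} - \tfrac{1}{\sqrt{2}}\mathsf{g}')^2 = \tfrac{1}{2}(2\mathsf{g}-\mathsf{g}')^2$ in the $\mathsf{g}$-expression, and the $\tfrac{1}{6}$ from $\mathbb{E}_{\Theta_i}$) identifies each expression with the claimed constant modulo $\mathcal{I}$. Since each expression is manifestly a sum of squares, this certifies that a negative scalar is a sum of squares modulo $\mathcal{I}$, so $1 \in \mathcal{I}$ and the variety is empty.

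The main obstacle is the triple identity $Q_{ijk} \equiv S_{ijk} - 1 \pmod{\mathcal{I}}$: this is the one step in which the no-red-triangle and no-blue-triangle generators are used simultaneously, and it encodes precisely the Ramsey-theoretic constraint that every three-vertex subset must see both colors. Once this is established, the remaining algebra is pure bookkeeping: expanding a quadratic in $D_i$, applying $\mathsf{x}_{ij}^2 \equiv \mathsf{x}_{ij}$, and converting the double-counted cherry sum into an edge sum via the ``each edge sits in $n-2$ triples'' identity.
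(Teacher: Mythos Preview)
Your algebra is correct through the line $\sum_i(2D_i-5)^2\equiv -10\pmod{\mathcal I}$, but the final ``plugging in'' does not yield $-1$. Both expressions equal $\tfrac12+\tfrac{1}{6}\cdot\tfrac12\sum_{i}(2D_i-5)^2$, so with your relation they reduce to $\tfrac12-\tfrac{5}{6}=-\tfrac13$, not $-1$. That still certifies that the variety of $\mathcal I$ is empty (a sum of squares equal to a negative constant modulo $\mathcal I$), so you do recover the ``hence'' clause of the theorem; but the hypercube relations together with the \emph{summed} triple identity $\sum_iP_i\equiv 4\mathsf s-20$ are too weak to pin the constant down as $-1$.

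The paper reaches $-1$ by working per vertex rather than globally. It first proves a \emph{claw lemma} $\mathsf x_{ij}\mathsf x_{ik}\mathsf x_{il}\in\mathcal I$: multiply your triple identity for $\{j,k,l\}$ through by $\mathsf x_{ij}\mathsf x_{ik}\mathsf x_{il}$ and observe that every resulting monomial contains a red-triangle generator. Expanding $\prod_{j\ne 1}(1-\mathsf x_{1j})\equiv 0$ and using the claw lemma to kill all terms of degree $\ge 3$ gives the per-vertex relation $P_1\equiv D_1-1$; its colour-reversed form gives $P_1\equiv 3D_1-6$. These two together force $2D_1\equiv 5$, hence $(2D_1-5)^2\equiv -3$ at every vertex, and the symmetrised expression is $\tfrac12+\tfrac12(-3)=-1$ on the nose. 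Your global relation is strictly weaker than this pair of per-vertex relations, which is exactly why your route lands on $-\tfrac13$ instead of $-1$.
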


\proof
Check that  the expressions in the $\mathsf{d}^\Theta_F$'s and $\mathsf{g}^\Theta_F$'s are each equal to
$$\textup{sym}\left( \left(\frac{1}{\sqrt{2}}\right)^2 + \left(\sqrt{2} \sum_{i=2}^6 x_{1i} - \frac{5}{\sqrt{2}}\right)^2\right).$$
This sos is equal to another sos
$$\textup{sym}\left(\left(2-\sum_{i=2}^6 x_{1i}\right)^2+\left(2-\sum_{i=2}^6 \left(1-x_{1i}\right)\right)^2\right),$$
which is equivalent to $-1$ modulo $\mathcal{I}$ by Lemma~\ref{lem:sos proof of Ramsey}.
\qed

% !TEX root =  main.tex
\section{An example needing infinitely many flags}  \label{sec:infinite}
\label{sec:infinite_example}
	For every positive integer $n$, define the degree two polynomial
	\begin{equation}
	\label{eq:inf-fn} 
		\mathsf{f}_n=\frac{1}{\binom{n}{2}^2}\left(\sum_{e\in E(K_n)} \mathsf{x}_e - \left\lfloor \frac{\binom{n}{2}}{2} \right\rfloor\right)
		\left(\sum_{e\in E(K_n)} \mathsf{x}_e - \left\lfloor \frac{\binom{n}{2}}{2} \right\rfloor - 1\right).
	\end{equation}
	Observe that $\mathsf{f}_n$ satisfies $0\leq \mathsf{f}_n(\mathsf{x})\leq 1$ for all $\mathsf{x}\in \cV_n$. 
	(Here the upper bound is not tight.)
	In this section, we show that there is no finite collection of flags $\mathcal{F}$ 
	such that each $\mathsf{f}_n$ is equivalent to a sos (modulo $\mathcal{I}_n$) of flag polynomials $d_{\Theta}^F$ for $F\in \mathcal{F}$.

	Despite being of degree two and non-negative on $\cV_n$,
	Grigoriev~\cite{grigoriev2001complexity} showed that any sos $\mathsf{h}_n$, such that
	$\mathsf{h}_n(\mathsf{x}) = \mathsf{f}_n(\mathsf{x})$ for all $\mathsf{x}\in\cV_n$, necessarily has degree
	$\Omega\left(\binom{n}{2}\right)$ (see also~\cite{BlekhermanGouveiaPfeiffer}
	for an extension to the case in which $h$ is rational).  Recently, Lee,
	Prakesh, de Wolf, and Yuen~\cite{lee2016sum} extended this degree lower bound to
	the case in which $\mathsf{h}_n$ approximates $\mathsf{f}_n$ in the
	$\ell_{\infty}$ sense. The following is a special case of~\cite[Theorem 1.1]{lee2016sum}.
\begin{theorem}
	\label{thm:lee}
	There exists a (sufficiently small) positive constant $c$ such that for any positive integer $n$, 
	if $\mathsf{h}_n$ is $d$-sos and satisfies
	\[|\mathsf{h}_n(\mathsf{x})-\mathsf{f}_{n}(\mathsf{x})| \leq 
		\frac{(1/50)}{\binom{n}{2}^2}\]
	for all $\mathsf{x}\in \cV_n$, then $d\geq c\binom{n}{2}$.
\end{theorem}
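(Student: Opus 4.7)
The plan is to derive this statement as a clean instance of the cited result of Lee, Prakash, de Wolf and Yuen~\cite{lee2016sum}. Observe first that $\mathsf{f}_n$ depends only on the edge-count $s := \sum_{e\in E(K_n)} \mathsf{x}_e$, vanishing precisely on the two-point set $\{s = \lfloor \binom{n}{2}/2 \rfloor,\ s = \lfloor \binom{n}{2}/2 \rfloor + 1\}$ and taking values of order at least $1/\binom{n}{2}^2$ elsewhere on $\cV_n$. Thus, up to normalization, $\mathsf{f}_n$ is exactly the near-threshold symmetric polynomial for which~\cite{lee2016sum} establishes strong approximate-sos degree lower bounds, and the first step is to match conventions (in particular the error tolerance $\frac{1/50}{\binom{n}{2}^2}$) with the parameters appearing in their Theorem~1.1.

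To approach the statement from scratch, the natural route is via a dual witness. One would construct, for every $d \leq c\binom{n}{2}$ with $c$ a sufficiently small absolute constant, a linear functional $\widetilde{E}$ on $\RR[\cV_n]_{\leq 2d}$ such that (i) $\widetilde{E}(1)=1$, (ii) $\widetilde{E}(\mathsf{q}^2) \geq 0$ for every $\mathsf{q} \in \RR[\cV_n]_{\leq d}$, and (iii) $\widetilde{E}(\mathsf{f}_n) \leq -\frac{1/50}{\binom{n}{2}^2}$. Given such a functional, if $\mathsf{h}_n$ were a $d$-sos approximation to $\mathsf{f}_n$ within error $\frac{1/50}{\binom{n}{2}^2}$, then property (ii) would force $\widetilde{E}(\mathsf{h}_n) \geq 0$, which combined with $|\widetilde{E}(\mathsf{h}_n - \mathsf{f}_n)| \leq \frac{1/50}{\binom{n}{2}^2}$ would immediately contradict~(iii). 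By averaging over the $\mathfrak{S}_n$-action (which preserves (i)--(iii) because $\mathsf{f}_n$ and $\cV_n$ are $\mathfrak{S}_n$-invariant), one may further take $\widetilde{E}$ to be $\mathfrak{S}_n$-invariant; it is then determined by a signed pseudo-distribution on the single statistic $s \in \{0,1,\ldots,\binom{n}{2}\}$, and the construction reduces to a univariate moment-matching problem with a positive semidefinite Hankel-type constraint.

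The main obstacle, were one to pursue this independently, is exactly the construction of such a signed univariate measure: one must simultaneously force its first $2d$ moments to coincide with those of a genuine probability distribution supported away from $\{a,a+1\}$ (for $a = \lfloor \binom{n}{2}/2\rfloor$) while retaining enough negative mass near these two points to meet (iii). Grigoriev's original construction~\cite{grigoriev2001complexity} handles the exact case ($\mathsf{h}_n = \mathsf{f}_n$); pushing it to the $\ell_\infty$-approximate regime required in our statement requires the connection to approximate-degree/quantum query complexity lower bounds developed in~\cite{lee2016sum}. Since our statement follows by specializing~\cite[Theorem~1.1]{lee2016sum} to $\mathsf{f}_n$ after the straightforward parameter identification indicated in the first paragraph, we propose to invoke it as a black box rather than reprove it here.
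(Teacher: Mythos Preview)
Your proposal is correct and matches the paper's treatment exactly: the paper does not prove Theorem~\ref{thm:lee} at all but simply records it as a special case of~\cite[Theorem~1.1]{lee2016sum}, which is precisely what you propose to do.

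One small caution about your motivational sketch in the second paragraph: from the pointwise bound $|\mathsf{h}_n(\mathsf{x})-\mathsf{f}_n(\mathsf{x})|\leq \epsilon$ on $\cV_n$ you cannot directly conclude $|\widetilde{E}(\mathsf{h}_n-\mathsf{f}_n)|\leq \epsilon$ for a mere pseudo-expectation $\widetilde{E}$; that inference requires an additional bound on the total variation (or $\ell_1$-norm) of the signed measure underlying $\widetilde{E}$. Controlling this norm is in fact part of what makes the approximate case in~\cite{lee2016sum} substantially harder than Grigoriev's exact case, so your decision to invoke the result as a black box is the right one.
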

Since the minimum value of $\mathsf{f}_n$ on $\RR^{\binom{n}{2}}$ is
$-(1/4)/\binom{n}{2}^2$, it follows that $\mathsf{f}_n(\mathsf{x})+(1/4)/\binom{n}{2}^2$ is
a non-negative quadratic on $\RR^{\binom{n}{2}}$, and so it is a sos of degree one polynomials. As such, we certainly cannot hope to tolerate errors
larger than $(1/4)/\binom{n}{2}^2$ in Theorem~\ref{thm:lee}.

Any symmetric sos of flag polynomials $\mathsf{d}^{\Theta}_F$ using
a fixed collection of flags $F$ necessarily has bounded degree.  Hence the
(growing) lower bound on degree from Theorem~\ref{thm:lee} implies that sums of
squares of flag polynomials using a fixed collection of flags cannot be used to 
certify non-negativity of every $\mathsf{f}_n$, $n\geq 1$.
\begin{proposition}
	\label{prop:inf-flag}
	For any fixed positive integer $f$, there does not exist a $f$-flag sos $\mathsf{p}$ such that 
	 $|\mathsf{p}(\mathsf{x})-\mathsf{f}_n(\mathsf{x})| \leq \frac{(1/50)}{\binom{n}{2}^2}$ 
	for all $\mathsf{x}\in\cV_n$ and all sufficiently large $n$. 
\end{proposition}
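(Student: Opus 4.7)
The plan is to reduce the proposition to Theorem~\ref{thm:lee} by establishing that every $f$-flag sos is automatically $D$-sos in the sense of Section~\ref{sec:bases_sn}, with $D$ depending only on $f$ (and not on $n$). Once this is in hand, the degree lower bound of Theorem~\ref{thm:lee} forces a contradiction for $n$ large.

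First I would unpack the flag sos expression. Suppose for contradiction that for each sufficiently large $n$ there exists an $f$-flag sos $\mathsf{p}$ with $|\mathsf{p}(\mathsf{x})-\mathsf{f}_n(\mathsf{x})| \leq (1/50)/\binom{n}{2}^2$ on $\cV_n$. By Definition~\ref{def:flag sos}, there are psd matrices $R_{T,f'}$ (for intersection types $T$ of size $t\leq f$ and flag sizes $f'\leq f$) so that
\[
\mathsf{p} = \sum_{T,\, f'} \textup{tr}\!\left(R_{T,f'}\, \mathbb{E}_{\Theta}\!\left[\bm{\mathsf{d}}^{\Theta,T,f'}(\bm{\mathsf{d}}^{\Theta,T,f'})^\top\right]\right).
\]
Factor each $R_{T,f'}=\sum_k r_{T,f',k}\, r_{T,f',k}^\top$ and write the expectation as a normalized sum over $\Theta \in \textup{Inj}([t],[n])$. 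Absorbing the (positive) normalization constants into the squared terms exhibits $\mathsf{p}$ as a genuine sum of squares in $\RR[\mathsf{x}]$, each square being the square of some $\RR$-linear combination of flag polynomials $\mathsf{d}_F^{\Theta}$ with $|V(F)|\leq f$.

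Next, I would bound the degree of each such summand. From the explicit formula
\[
\mathsf{d}^\Theta_F=\sum_{h \in \textup{Inj}_\Theta(V(F),[n])}\!\!\prod_{\{i,j\}\in E(F)}\!\mathsf{x}_{h(i)h(j)}\!\!\prod_{\{i,j\}\in \binom{V(F)}{2}\setminus E(F)}\!(1-\mathsf{x}_{h(i)h(j)}),
\]
each $\mathsf{d}_F^\Theta$ with $|V(F)|\leq f$ has $\mathsf{x}$-degree at most $\binom{f}{2}$. Hence every polynomial being squared has degree at most $\binom{f}{2}$, so $\mathsf{p}$ is $\binom{f}{2}$-sos in the sense of Section~\ref{sec:bases_sn}.

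Finally, I would invoke Theorem~\ref{thm:lee} with $d=\binom{f}{2}$. The approximation hypothesis on $\mathsf{p}$ is precisely what Theorem~\ref{thm:lee} requires, so it yields $\binom{f}{2}\geq c\binom{n}{2}$; since $f$ is fixed and $n\to\infty$, this is impossible, giving the desired contradiction. The one subtle point, and essentially the only place requiring care, is the first step: one must verify that opening the expectation $\mathbb{E}_\Theta$ into a finite sum and diagonalizing each $R_{T,f'}$ produces an honest sum of squares of polynomials in $\RR[\mathsf{x}]$, rather than a convex combination of sums of squares or an object only meaningful modulo $\mathcal{I}_n$, so that the $d$-sos framework and Theorem~\ref{thm:lee} genuinely apply. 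After this, the argument is immediate.
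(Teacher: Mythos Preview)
Your proposal is correct and takes essentially the same approach as the paper: show that any $f$-flag sos is $\binom{f}{2}$-sos, then apply Theorem~\ref{thm:lee} to obtain $\binom{f}{2}\ge c\binom{n}{2}$, which fails for large $n$. The paper's proof is just these two sentences; you have simply filled in the details behind the first one (unpacking the expectation, diagonalizing the psd matrices, and bounding $\deg \mathsf{d}_F^\Theta \le \binom{f}{2}$).
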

\begin{proof}
	If $\mathsf{p}$ is a $f$-flag sos, then $\mathsf{p}$ is $\binom{f}{2}$-sos. If $n$ is large enough, 
	then $c\binom{n}{2} > \binom{f}{2}$, where $c$ is the constant in Theorem~\ref{thm:lee}. 	
\end{proof}

This shows that there is a sequence of symmetric polynomials of degree two, all
of which are non-negative and bounded on $\cV_n$, that cannot
be approximated within $O(1/n^4)$ by $f$-flag sums of squares for any fixed $f$.
We make several comments. 

Razborov's application of flag density polynomials typically allows much larger
errors of size $O(1/n)$, suggesting the following question. Is there a sequence
of symmetric polynomials taking values in $[0,1]$ that cannot be 
approximated within $O(1/n)$ by sums of squares from a finite collection of flags?

In \cite{RazborovFlagAlgebras}, Razborov asked whether every asymptotic graph density inequality can be certified 
by a finite set of flags. As an answer to this, Hatami and Norine proved that deciding the non-negativity of 
a linear inequality in graph densities is in general undecidable \cite{Hatami-Norine}. 
Proposition~\ref{prop:inf-flag} can be viewed as an answer to Razborov's question in our setting of the discrete hypercube 
for finite $n$. 

At a quick glance, Proposition~\ref{prop:inf-flag} may seem to contradict our earlier result that 
it is possible to write a flag sos expression that depends only on the degree of a 
given sos expression and independent of $n$.  For the family of quadratic 
polynomials $\mathsf{f}_n$, the degree of any sos expression is a function of 
$n$ and therefore, by our results, the number of flags needed in a flag sos 
expression also depends on $n$.

% !TEX root =  main.tex

\section{Summary and Discussion}  \label{sec:discussion}

Our results give the following framework for searching for a succinct sos expression for a symmetric polynomial $\mathsf{p}$ that is $d$-sos.

\begin{enumerate}

\item For each partition $\bm{\lambda} \in \Lambda = \{ \bm{\lambda} \vdash n \,:\, {\bm{\lambda}} \geq_{\textup{lex}} (n-2d,1^{2d})\}$, fix a standard tableau $\tau_{\bm{\lambda}}$ of shape $\bm{\lambda}$.

 \item Pick polynomials  $\mathsf{p}_1^{\tau_{\bm{\lambda}}}, \ldots, \mathsf{p}_{l_{\tau_{\bm{\lambda}}}}^{\tau_{\bm{\lambda}}}$ whose span contains $W_{\tau_{\bm{\lambda}}}$. 
 
\item Introduce a matrix of variables $R_{\bm{\lambda}}$ of size $l_{\tau_{\bm{\lambda}}} \times l_{\tau_{\bm{\lambda}}}$. 

\item Formulate a SDP using the following polynomial equivalence
 $$ \mathsf{p} = \sum_{\bm{\lambda} \in \Lambda} \textup{tr}( R_{\bm{\lambda}}\, Z^{\tau_{\bm{\lambda}}})$$
 where $Z^{\tau_{\bm{\lambda}}}_{ij} := \textup{sym}(\mathsf{p}_i^{\tau_{\bm{\lambda}}} \mathsf{p}_j^{\tau_{\bm{\lambda}}})$.
\end{enumerate}

We summarize the computational savings in each of the above steps.
First of, the symmetry of $\mathsf{p}$ allows us to work with a \emph{single} standard tableau $\tau_{\bm{\lambda}}$ of shape $\bm{\lambda}\in \Lambda$ as in Corollary~\ref{cor:GP with big lambda}. 
By Proposition~\ref{prop:total number of m lambdas}, the size of $\Lambda$, and hence the number of partitions that are needed for sos expressions in our method, is independent of $n$. 
Our framework gives us the flexibility to work with a spanning set for $W_{\tau_{\bm{\lambda}}}$ as opposed to a basis. 
 We constructed spanning sets in Section~\ref{sec:spanning sets} whose 
cardinalities are independent of $n$ as proved in Proposition~\ref{prop:mlambda-indn}.
The size of the formulated SDP depends on the total sum of dimensions $m_{\bm{\lambda}}$ of $W_{\tau_{\bm{\lambda}}}$ for $\bm{\lambda} \in \Lambda$.  Combining 
Propositions~\ref{prop:total number of m lambdas} and ~\ref{prop:mlambda-indn}, we get that  
$\sum_{{\bm{\lambda}} \in \Lambda} m_{\bm{\lambda}}$ is also independent of $n$. 

Our spanning sets for $W_{\tau_{\bm{\lambda}}}$ can have combinatorial meaning and structure 
relevant to the polynomial $\mathsf{p}$ in some instances.  For example, in extremal combinatorics, the density polynomials from flag algebras have simple combinatorial interpretations as compared to the basis necessary for the Gatermann-Parrilo method. We also saw in Section~\ref{sec:spanning sets} that the concept of flags arise naturally in our theory from the action of $\mathfrak{S}_n$ on square-free monomials. 

We now make several comments on various features, connections and extensions of our method. 
While we presented our results for the hypercube $\mathcal{V}_n$, the arguments naturally generalize to subsets of $\mathcal{V}_n$ by working with ideals that contain 
$\mathcal{I}_n$ and are invariant under $\mathfrak{S}_n$. There are several ways to approach this general situation. If the ideal in question is $\mathcal{I}$, 
then for instance, one could look for symmetry-reduced sos expressions from the vector space $V = \RR[\cV_n]_{\leq d}$
as we have been doing in this paper. In this case, our methods work as is and we can further reduce the obtained 
sos expression by the ideal $\mathcal{I}$ as seen in the Ramsey example in Section~\ref{sec:ramsey}. Alternately, one could work with $V = \mathbb{R}[{\mathsf{x}}]_{\leq d} / \mathcal{I}$ or $V =  \left( \mathbb{R}[{\mathsf{x}}] / \mathcal{I} \right)_{\leq d}$. In both these cases, our strategy would work in principle but we would need to understand the isotypic decomposition of $V$ and the associated vector spaces $W_{\tau_{\bm{\lambda}}}$.

In contrast to the flag algebra  and graph homomorphism frameworks, our methods work on graphs of finite size when applied to graph density problems. This allows us to obtain new types of exact sos proofs that apply to finite, as opposed to asymptotic situations. 
For example, in Section~\ref{sec:ramsey}, we gave a sos proof for an upper bound on the Ramsey number $R(3,3)$ in which case $n=6$. 
Our methods also naturally give upper bounds on extremal graph theoretic problems in the sparse regime. For example, we saw 
how to use our method to obtain a succinct sos proof of the result of K\H ovari, S\'os and Tur\'an~\cite{KST54} that a $n$-vertex graph not containing $C_4$ has at most $\frac{1}{2} n^{3/2} +O(n)$ edges. Since the extremal graphs are not dense in this case, applying the flag algebra framework gives an upper bound of zero on the limiting edge density but our method can give a precise estimate of the lower order terms. 

Finally, as promised in the Introduction, our results extend naturally to the hypercube $\mathcal{V}_{n,k} := \{0,1\}^{n \choose k}$. 
We would use the polynomial ring with variables $\mathsf{x}_{i_1\ldots i_k}$ indexed by the edges in the complete $k$-uniform hypergraph $K_n^k$ with $\mathfrak{S}_n$ acting on monomials via $\mathfrak{s} \cdot \mathsf{x}_{i_1\ldots i_k}:=\mathsf{x}_{\mathfrak{s}(i_1)\ldots \mathfrak{s}(i_k)}$ for each $\mathfrak{s}\in \mathfrak{S}_n$. The ideal needed here is 
$\mathcal{I}_{n,k} :=\langle \mathsf{x}_{i_1 \ldots i_k}^2 - \mathsf{x}_{i_1 \ldots i_k} \, \forall \{i_1,\ldots, i_k\}\in \binom{[n]}{k} \rangle$. All of our results and their proofs can be translated in a straightforward manner to this setting and we give a few samples below. 

\begin{itemize}

\item In  Theorem \ref{thm:isodecomp}, the multiplicity $m_{\bm{\lambda}}$ of $S_{\bm{\lambda}}$ in the decomposition of $V=\mathbb{R}[{\mathsf{x}_{i_1\ldots i_k}}]_{\leq d}$ into irreducible $\fS_n$-modules is zero unless ${\bm{\lambda}}\geq_{\textup{lex}}(n-kd,1^{kd})$, i.e., $V = \bigoplus_{{\bm{\lambda}} \geq_{\textup{lex}} (n-kd,1^{kd})} V_{\bm{\lambda}}.$ Thus the number of partitions with non-zero $m_{\bm{\lambda}}$ can be bounded by $p(0)+p(1)+\ldots+p(kd)$ generalizing Proposition~\ref{prop:total number of m lambdas}.
    
\item In Theorem \ref{thm:span}, the polynomials $\mathsf{g}_F^\Theta$ (respectively $\mathsf{d}_F^\Theta$) with flags $F\in \mathcal{F}_T^{kd}$ (with 
the same restrictions) span $W_{\tau_{\bm{\lambda}}}$. 
    The sizes of these spanning sets will be independent of $n$ as in Proposition~\ref{prop:mlambda-indn}.

        \item In Theorem \ref{thm:GP=Razborov},  if a non-negative symmetric polynomial $\mathsf{p}\in \mathbb{R}[\mathcal{V}_{n,k}]$ is $d$-sos, then $\mathsf{p}$ also has a flag sos certificate coming from flags with at most $kd$ vertices.
\end{itemize}

\bibliography{RSST}
\bibliographystyle{alpha}
\addresseshere

\begin{appendix}
\newpage
\section{Proof of Theorem~\ref{thm:GP}}
\label{app:appendix1}

The aim of this appendix is to prove Theorem~\ref{thm:GP}, describing the structure of
symmetry-reduced certificates of non-negativity for $\fS_n$-invariant polynomials
that are $d$-sos. Rather than proving Theorem~\ref{thm:GP} directly, we establish a slightly more general result (Theorem~\ref{thm:GP-app} to follow)
and then specialize to give Theorem~\ref{thm:GP}.
This more general result is essentially the main result of Gatermann and Parrilo~\cite{GatermannParrilo}.
In Section~\ref{sec:app-bg}, we summarize some basic facts about real representations needed for the appendix.
In Section~\ref{sec:app-symsq}, we examine what happens when we symmetrize products and squares of polynomials under
a finite group action, leading to a proof of Theorem~\ref{thm:GP-app} and of Theorem~\ref{thm:GP}.

\subsection{Preliminaries on real representations}
\label{sec:app-bg}
Let $G$ be a finite group and let $U$ be a finite-dimensional $\RR G$-module, (i.e., a real vector space with an action of $G$ by linear transformations).
An $\RR G$-module $U$ is \emph{irreducible} if
the only real vector subspaces of $U$ invariant under the action of $G$ are $\{0\}$ and $U$.
Given any pair $U,W$ of $\RR G$-modules, let $\textup{Hom}_G(U,W)$ be the vector space of $\RR$-linear maps
$\phi:U\rightarrow W$ such that $\phi(g\cdot u) = g\cdot \phi(u)$ for all $u\in U$.
If $U,W$ are $\RR G$-modules, then the tensor product (over $\RR$), $U\otimes W$, is a $\RR G$-module
with the action $g\cdot (u\otimes w) = (g\cdot u)\otimes (g\cdot w)$.
If $U$ is a $\RR G$-module, then the space $U^*$ of $\RR$-valued linear
functionals on $U$ is a $\RR G$-module via $(g\cdot \ell)(u) = \ell(g^{-1}\cdot u)$.

If $U$ is a $\RR G$-module, define the linear map $\sym_{G}:U\rightarrow U$
by
\[ \sym_G(u) = \frac{1}{|G|}\sum_{g\in G} g\cdot u.\]
Clearly, the image of $\sym_G$ is the set of elements of $U$ fixed by the action of $G$, i.e.,
$U^G:=\{u\in U:\; g\cdot u = u\;\;\textup{for all $g\in G$}\}$. We omit the subscript $G$ from $\sym_G$ if
the group is clear from the context. We also note that $\sym_{G}\in \textup{Hom}_G(U,U^G)$ where $U^G$ carries
the trivial action of $G$.

We now recall the the isotypic decomposition of a $\RR G$-module $V$.
Let $(S_{\lambda})_{\lambda\in \Lambda}$ be an enumeration of inequivalent
irreducible $\RR G$-modules and let $n_\lambda = \dim(S_\lambda)$.
For each $\lambda \in \Lambda$, let
$\textup{Hom}_{G}(S_\lambda,V)$ denote the $m_\lambda$-dimensional \emph{multiplicity space} of $S_\lambda$ in $V$.
Let
\[ V_{\lambda} = \textup{span}\{\phi(s): \phi\in \textup{Hom}_G(S_\lambda,V),\;\;s\in S_{\lambda}\}\]
denote the \emph{isotypic component} of $V$ corresponding to $S_{\lambda}$. The decomposition
$V = \bigoplus_{\lambda\in \Lambda} V_{\lambda}$ is the \emph{isotypic decomposition} of $V$.

There are two natural ways to construct subspaces of the isotypic components $V_{\lambda}$. If we fix  a non-zero $\phi\in \textup{Hom}_G(S_\lambda,V)$
and consider $\{\phi(s): s\in S_\lambda\}$, we obtain a $G$-invariant subspace of $V_{\lambda}$ that is isomorphic to $S_{\lambda}$ and
has dimension $n_\lambda$. On the other hand, if we fix a non-zero $s_\lambda\in S_{\lambda}$,
we can define
\begin{equation}
	\label{eq:Ws}
	 W_{s_\lambda} := \{\phi(s_\lambda): \phi\in \textup{Hom}_G(S_\lambda,V)\}\subseteq V_\lambda.
\end{equation}
 This is a vector subspace
of $V_{\lambda}$ of dimension $m_\lambda$, but is not $G$-invariant. It is however isomorphic
to the multiplicitly space $\textup{Hom}_G(S_\lambda,V)$ as a vector space.
\begin{lemma}
	\label{lem:mult-iso}
	Fix some non-zero $s_\lambda \in S_\lambda$. Then the map $\Phi_{s_\lambda}: \textup{Hom}_G(S_\lambda,V)\rightarrow W_{s_{\lambda}}$
	defined by $\Phi_{s_\lambda}(\phi) = \phi(s_\lambda)$ is an isomorphism of vector spaces.
\end{lemma}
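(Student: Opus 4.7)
The plan is to verify the three standard requirements for a vector space isomorphism in order: $\mathbb{R}$-linearity, surjectivity, and injectivity. Linearity of $\Phi_{s_\lambda}$ is the simplest piece and will follow directly from the fact that evaluation at a fixed element is a linear operation on the space of linear maps; that is, for $\phi,\psi \in \textup{Hom}_G(S_\lambda,V)$ and $\alpha,\beta \in \mathbb{R}$, we have $(\alpha \phi + \beta \psi)(s_\lambda) = \alpha \phi(s_\lambda) + \beta \psi(s_\lambda)$. I will state this briefly and move on.

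Surjectivity will be immediate from the way $W_{s_\lambda}$ is defined in~\eqref{eq:Ws}: the image of $\Phi_{s_\lambda}$ is literally the set $\{\phi(s_\lambda) : \phi \in \textup{Hom}_G(S_\lambda,V)\}$, which is $W_{s_\lambda}$ by definition. No work is required beyond unwinding definitions.

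The main content lies in injectivity, and this is where I expect to use the irreducibility of $S_\lambda$ in an essential way (a Schur-type argument). Suppose $\phi \in \textup{Hom}_G(S_\lambda,V)$ lies in the kernel of $\Phi_{s_\lambda}$, so that $\phi(s_\lambda) = 0$. Consider $\ker(\phi) \subseteq S_\lambda$. Because $\phi$ is $G$-equivariant, $\ker(\phi)$ is a $G$-invariant subspace of $S_\lambda$. Since $S_\lambda$ is irreducible, its only $G$-invariant subspaces are $\{0\}$ and $S_\lambda$. The hypothesis $\phi(s_\lambda)=0$ together with $s_\lambda \neq 0$ rules out $\ker(\phi) = \{0\}$, so $\ker(\phi) = S_\lambda$ and hence $\phi = 0$. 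This gives injectivity and completes the proof.

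I do not anticipate any genuine obstacle: the lemma is essentially a restatement of the observation that a nonzero $G$-equivariant map out of an irreducible module is injective, applied pointwise at the chosen vector $s_\lambda$. The only subtle point worth flagging is that $W_{s_\lambda}$ need not be $G$-invariant, so the isomorphism is as vector spaces only, which matches the statement of the lemma.
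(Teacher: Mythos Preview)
Your proof is correct and matches the paper's argument essentially line for line: linearity is noted as clear, surjectivity follows from the definition of $W_{s_\lambda}$, and injectivity is obtained by observing that $\ker(\phi)$ is a $G$-invariant subspace of the irreducible module $S_\lambda$ containing the nonzero element $s_\lambda$, forcing $\phi=0$.
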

\begin{proof}
	The map $\Phi_{s_\lambda}$ is clearly linear. It is surjective by the definition of $W_{s_\lambda}$.
	It remains to show that $\Phi_{s_\lambda}$ is injective. Let $\phi \in \textup{Hom}_G(S_\lambda,V)$ and suppose that
	$\Phi_{s_\lambda}(\phi) = \phi(s_\lambda) = 0$.
	Since $\phi\in \textup{Hom}_G(S_\lambda,V)$, the kernel of $\phi$ is an invariant subspace of $S_\lambda$ that contains the non-zero element
	$s_\lambda$. Since $S_\lambda$ is irreducible, the kernel of $\phi$ must then be all of $S_\lambda$, and so $\phi=0$. Hence $\Phi_{s_\lambda}$
	is injective and so an isomorphism of vector spaces.
\end{proof}

The following standard fact about invariant bilinear forms on $\RR G$-modules, is central to our discussion.
\begin{lemma}
	\label{lem:bil}
	If $G$ is a finite group and $U$ is a finite-dimensional $\RR G$-module then $U$ has a non-degenerate, $G$-invariant,
	symmetric, bilinear form $B:U\times U\rightarrow \RR$ satisfying $B(u,u)>0$ for all non-zero $u\in U$.
	Moreover, if $U$ is irreducible, then any $G$-invariant bilinear form on $U$ is a scalar multiple of $B$.
\end{lemma}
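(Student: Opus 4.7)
The statement has two parts, existence and uniqueness, and I would handle them in that order using the standard averaging trick followed by Schur's lemma.

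For existence, I would start from any symmetric, positive-definite bilinear form $B_0$ on $U$ (e.g.\ fix a basis of $U$ and take the standard inner product in that basis). Such a form exists since $U$ is just a finite-dimensional real vector space, with no group structure used yet. I would then define
\[
B(u,v) := \frac{1}{|G|}\sum_{g\in G} B_0(g\cdot u,\; g\cdot v).
\]
Bilinearity and symmetry of $B$ are immediate from bilinearity and symmetry of $B_0$. The $G$-invariance $B(h\cdot u, h\cdot v) = B(u,v)$ follows from reindexing the sum via $g\mapsto gh^{-1}$. For positive-definiteness, observe that every term $B_0(g\cdot u, g\cdot u)$ is non-negative, and the term corresponding to $g = e$ equals $B_0(u,u) > 0$ whenever $u\neq 0$, so $B(u,u) > 0$ for $u\neq 0$. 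Non-degeneracy is then automatic.

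For uniqueness, I would translate the statement about bilinear forms into a statement about $\textup{Hom}_G(U,U^*)$ via the standard correspondence $B'\leftrightarrow \phi_{B'}$, where $\phi_{B'}(u)(v) := B'(u,v)$. A bilinear form $B'$ is $G$-invariant precisely when $\phi_{B'}\in \textup{Hom}_G(U,U^*)$. The form $B$ produced in the first part is non-degenerate, so $\phi_B$ is an isomorphism of $\RR G$-modules. Given any $G$-invariant bilinear form $B'$, the composition $\phi_B^{-1}\circ \phi_{B'}$ lies in $\textup{End}_G(U)$. When $U$ is irreducible and absolutely irreducible (which is the case in our setting since $G = \fS_n$, whose real irreducibles are all of real type), Schur's lemma gives $\textup{End}_G(U) = \RR\cdot \textup{id}$, so $\phi_{B'} = c\,\phi_B$ for some $c\in\RR$, hence $B' = cB$.

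\textbf{Main obstacle.} The only subtlety I anticipate is in the uniqueness part: over $\RR$, Schur's lemma only guarantees that $\textup{End}_G(U)$ is a real division algebra, which by Frobenius is one of $\RR$, $\CC$, or $\HH$. The conclusion ``any $G$-invariant bilinear form is a scalar multiple of $B$'' therefore requires $\textup{End}_G(U) = \RR$, i.e.\ absolute irreducibility of $U$. This is automatic for $G = \fS_n$ (the only case the rest of the paper uses), so I would invoke this fact, or alternatively restrict attention to the subspace of symmetric invariant forms, which in all three cases is one-dimensional once one fixes the reference form $B$. The existence half is entirely standard and presents no difficulty.
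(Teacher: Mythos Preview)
Your existence argument is exactly the paper's: average an arbitrary inner product over $G$.

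For uniqueness the routes differ. You pass through the identification $B'\mapsto \phi_{B'}\in \textup{Hom}_G(U,U^*)$ and then invoke Schur on $\textup{End}_G(U)$, which forces you to confront the Frobenius trichotomy and fall back on absolute irreducibility (true for $\fS_n$, as you note). The paper instead argues directly: pick a real generalized eigenvalue $\lambda$ for the pair of matrices representing $B$ and $B'$, observe that $\{v:\lambda B(u,v)=B'(u,v)\text{ for all }u\}$ is then a non-zero $G$-invariant subspace, and conclude it is all of $U$ by irreducibility. This avoids naming Schur or classifying $\textup{End}_G(U)$, but it quietly needs a \emph{real} $\lambda$ to exist; the paper justifies this by treating both Gram matrices as symmetric, which is the same hidden hypothesis you flagged (and is again automatic for $\fS_n$, where every invariant bilinear form is symmetric). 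So the two arguments are different packagings of the same content, each leaning on real type at the one delicate point; yours makes that dependence explicit, the paper's is a bit more self-contained but leaves it implicit in the eigenvalue step.
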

\begin{proof}
	Let $\langle \cdot,\cdot\rangle$ be any inner product on $U$, i.e., $\langle \cdot, \cdot \rangle$ is a symmetric, 
	non-degenerate, positive definite, bilinear form. Then $B:U\times U \rightarrow \RR$, defined by
	\[ B(u,v) = \frac{1}{|G|}\sum_{g\in G} \langle g\cdot u,g\cdot v\rangle,\]
	is $G$-invariant, and is a convex combination of non-degenerate, symmetric, positive definite, bilinear forms. 
	Hence, $B$ also has these properties. Now suppose $U$ is irreducible and $B'$ is another $G$-invariant bilinear form on $U$.
	Since $B(\cdot,\cdot)$ is positive definite, we can 
	choose $\lambda$ such that the subspace $\{v\in U: \lambda B(u,v) = B'(u,v)\;\;\textup{for all $u\in U$}\}$ is non-zero.
	(Here, we can take $\lambda$ to be any generalized eigenvalue for the pair of symmetric matrices representing the bilinear forms.)
	Since $B$ and $B'$ are both $G$-invariant, this is a $G$-invariant subspace of $U$. Since $U$ is irreducible, 
	we must have $\{v\in U: \lambda B(u,v) = B'(u,v)\;\;\textup{for all $u\in U$}\} = U$. Hence $\lambda B = B'$, as required.
\end{proof}

We now record basic facts about $G$-invariant elements of tensor products.
\begin{lemma}
	\label{lem:ineq-tensor}
	If $G$ is a finite group and $U$ and $W$ be non-isomorphic irreducible finite-dimensional $\RR G$-modules,
	then $(U\otimes W)^G = \{0\}$.
\end{lemma}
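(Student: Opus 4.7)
The plan is to identify $(U\otimes W)^G$ with the Hom-space $\textup{Hom}_G(U,W)$ and then invoke Schur's lemma, which in its easy direction forces this Hom-space to vanish when $U$ and $W$ are non-isomorphic irreducibles.

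First, I would use Lemma~\ref{lem:bil} to produce a $G$-invariant non-degenerate symmetric bilinear form $B$ on $U$. The map $u\mapsto B(u,\cdot)$ is then an $\RR G$-module isomorphism $U\cong U^*$, so tensoring with $W$ gives an $\RR G$-module isomorphism $U\otimes W\cong U^*\otimes W$. (Alternatively, one can carry out everything with $U^*$ from the start; the bilinear form is used only to convert back to $U$ at the end.)

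Second, I would write down the standard natural map $\Psi: U^*\otimes W\to \textup{Hom}(U,W)$ defined on simple tensors by $\Psi(\ell\otimes w)(u) = \ell(u)\,w$. This is a linear bijection for dimension reasons, and a direct computation from the definitions of the $G$-actions on $U^*$, on the tensor product, and on $\textup{Hom}(U,W)$ (given by $(g\cdot\phi)(u) = g\cdot\phi(g^{-1}\cdot u)$) shows it is equivariant. Taking $G$-fixed points on both sides yields a vector space isomorphism $(U^*\otimes W)^G\cong \textup{Hom}(U,W)^G = \textup{Hom}_G(U,W)$. Combining with the previous step, $(U\otimes W)^G\cong \textup{Hom}_G(U,W)$.

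Finally, I would finish by invoking the easy half of Schur's lemma: any non-zero $\phi\in\textup{Hom}_G(U,W)$ has a $G$-invariant kernel in $U$ and a $G$-invariant image in $W$, so irreducibility of $U$ and $W$ forces $\ker(\phi)=\{0\}$ and $\textup{Im}(\phi)=W$, making $\phi$ an isomorphism and contradicting the assumption that $U\not\cong W$. Hence $\textup{Hom}_G(U,W)=\{0\}$ and therefore $(U\otimes W)^G=\{0\}$. There is no real obstacle here; the only item requiring care is the equivariance check for $\Psi$, and that is a one-line verification from the definitions.
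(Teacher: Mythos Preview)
Your proposal is correct and follows essentially the same approach as the paper: use Lemma~\ref{lem:bil} to identify $U\cong U^*$ as $\RR G$-modules, identify $(U^*\otimes W)^G$ with $\textup{Hom}_G(U,W)$, and conclude via Schur's lemma. The paper's proof is simply a terser version of yours, omitting the explicit definition of $\Psi$ and the equivariance verification that you spell out.
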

\begin{proof}
	Suppose $W$ and $U$ are irreducible and not isomorphic.
	By Schur's lemma (see, for instance,~\cite[Section 2.2]{SerreBook}), $\textup{Hom}_{G}(W,U) = \{0\}$. 	
	Since $U$ is a finite-dimensional $\RR G$-module, by Lemma~\ref{lem:bil} it has a non-degenerate, $G$-invariant, bilinear form
	$B: U\times U\rightarrow \RR$. Hence the map $U\ni u \mapsto B(u,\cdot)\in U^*$ gives an isomorphism between $U$ and $U^*$. Then
	$(U\otimes W)^G \cong (U^* \otimes W)^G \cong \textup{Hom}_G(U,W) = \{0\}$.
\end{proof}
The following result tells us that if $U$ is irreducible, then $(U\otimes U)^G$ is one-dimensional. It also explicitly describes the map
$\sym:U\otimes U\rightarrow (U\otimes U)^G$.
\begin{lemma}
	\label{lem:sym-tensor}
	If $G$ is a finite group and $U$ is a finite-dimensional irredcuible $\RR G$-module then for all $s\in U$ and all
	$u,u'\in U$,
	\[ B(s,s)\,\sym(u\otimes u') = B(u,u')\, \sym(s\otimes s)\]
	where $B(\cdot,\cdot)$ is the (unique up to scale) non-zero $G$-invariant bilinear form on $U$.
\end{lemma}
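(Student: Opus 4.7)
The plan is to exploit the uniqueness of $G$-invariant bilinear forms on $U$ guaranteed by Lemma~\ref{lem:bil}. Since $B$ is non-degenerate and $G$-invariant, the map $\beta: U \to U^*$ defined by $\beta(u) = B(u, \cdot)$ is a $G$-equivariant isomorphism, and so $\beta \otimes \beta$ identifies $(U \otimes U)^G$ with $(U^* \otimes U^*)^G$, i.e., with the space of $G$-invariant bilinear forms on $U$. By the uniqueness clause of Lemma~\ref{lem:bil}, the latter is one-dimensional, so $(U \otimes U)^G = \RR e_0$ for any fixed nonzero element $e_0$.

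Now I would define $c: U \times U \to \RR$ by the rule $\sym(u \otimes u') = c(u, u')\, e_0$, which is well-defined because $\sym(u \otimes u')$ lies in $(U \otimes U)^G$. The function $c$ is bilinear, from linearity of $\sym$ and bilinearity of the tensor product. It is also $G$-invariant: for any $g \in G$ and $u, u' \in U$,
\[ c(g \cdot u, g \cdot u')\, e_0 = \sym\bigl(g \cdot (u \otimes u')\bigr) = \sym(u \otimes u') = c(u, u')\, e_0, \]
where the middle equality uses that re-indexing the average $\sym$ by $g$ leaves it unchanged.

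The final step is to invoke the uniqueness clause of Lemma~\ref{lem:bil} once more, now applied to the $G$-invariant bilinear form $c$, to conclude that $c = \lambda B$ for some $\lambda \in \RR$. Hence
\[ \sym(u \otimes u') = \lambda B(u, u')\, e_0 \qquad\text{and}\qquad \sym(s \otimes s) = \lambda B(s, s)\, e_0, \]
and multiplying the first equation by $B(s, s)$ gives $B(s, s)\sym(u \otimes u') = \lambda B(s, s) B(u, u')\, e_0 = B(u, u')\sym(s \otimes s)$, the claimed identity.

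The only delicate point in the argument is the dimension count $\dim (U \otimes U)^G = 1$, which is where irreducibility of $U$ is used; once that reduction is in place the remainder is a one-line bilinear-form computation.
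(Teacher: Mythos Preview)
Your proof is correct and follows essentially the same idea as the paper's: both arguments rest on showing that a bilinear form built from $\sym(u\otimes u')$ is $G$-invariant and then invoking the uniqueness clause of Lemma~\ref{lem:bil}. The only cosmetic difference is that you first establish $\dim(U\otimes U)^G=1$ and write $\sym(u\otimes u')=c(u,u')\,e_0$, whereas the paper tests against an arbitrary linear functional $\eta\in (U\otimes U)^*$ and shows $(u,u')\mapsto \eta(\sym(u\otimes u'))$ is a multiple of $B$; these are dual phrasings of the same reduction.
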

\begin{proof}
	If $s=0$ then the statement clearly holds. Assume $s\neq 0$ and let $\eta\in (U\otimes U)^*$ be an arbitrary linear functional on $U\otimes U$.
	Then the map $(u,u')\mapsto \eta(\sym(u\otimes u'))$ is a $G$-invariant bilinear form on $U$. By Lemma~\ref{lem:bil},
	$\eta(\sym(u\otimes u')) = \kappa(\eta)B(u,u')$ for some $\kappa(\eta)\in \RR$. By putting $u=u'=s$, we see that
	$\kappa(\eta) = \eta(\sym(s\otimes s))/B(s,s)$. Together these imply that
	\[ B(s,s)\,\eta(\sym(u\otimes u')) = B(u,u')\, \eta(\sym(s\otimes s)).\]
 	Since $\eta$ was arbitrary, it follows that $B(s,s)\,\sym(u\otimes u') = B(u,u')\, \sym(s\otimes s)$ for all $u,u'\in U$.
\end{proof}

\subsection{Symmetrizing sums of squares}
\label{sec:app-symsq}
Let $\RR[{\mathsf{x}}]$ be the polynomial ring in $q$ indeterminates and let $G$
be a finite group acting linearly on $\RR^q$. Then $G$ acts on $\RR[{\mathsf{x}}]$
via $(g\cdot \mathsf{p})({\mathsf{x}}) = \mathsf{p}(g\cdot {\mathsf{x}})$.
If $\cI$ be an ideal invariant under the action of $G$, then the action of $G$ descends
to the quotient $\RR[{\mathsf{x}}]/\cI$.
Let $V$ be a finite-dimensional $G$-invariant subspace of $\RR[{\mathsf{x}}]/\cI$.
Let $V\otimes V$ be the tensor product (over $\RR$) of $V$ with itself.
Given $\mathsf{f}_1,\,\mathsf{f}_2\in V$, their product $\mathsf{f}_1\mathsf{f}_2\in \RR[{\mathsf{x}}]/\cI$
is a $\RR$-bilinear map $V\times V\rightarrow \RR[{\mathsf{x}}]/\cI$. Hence there is a linear map
$M_V: V\otimes V\rightarrow \RR[{\mathsf{x}}]/\cI$
such that
\[ \mathsf{f}_1\mathsf{f}_2 = M_V(\mathsf{f}_1\otimes \mathsf{f}_2).\]
The actions of $G$ on $V\otimes V$ and on $\RR[{\mathsf{x}}]/\cI$ are such that $M_V$ is a $\RR G$-module homomorphism.

As before, let $(S_{\lambda})_{\lambda\in \Lambda}$ be an enumeration of inequivalent
irreducible $\RR G$-modules.
Let $V = \bigoplus_{\lambda\in \Lambda} V_\lambda$ be the isotypic decomposition of $V$.
We now show that if we take functions from two different isotypic components of $V$ and
symmetrize their product, the result is zero.
\begin{lemma}
\label{lem:sym-prod-zero}
If $\lambda\neq \mu$, $\mathsf{f}_\lambda \in V_\lambda$, and $\mathsf{f}_\mu \in V_\mu$, then
$\sym(\mathsf{f}_\lambda \mathsf{f}_\mu) = 0$.
\end{lemma}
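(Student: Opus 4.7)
The plan is to use the map $M_V: V\otimes V \to \RR[\mathsf{x}]/\cI$ introduced just before the lemma, exploit its $G$-equivariance to push the symmetrization inside the tensor, and then invoke Lemma~\ref{lem:ineq-tensor} to kill the relevant invariants.

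First I would write $\mathsf{f}_\lambda \mathsf{f}_\mu = M_V(\mathsf{f}_\lambda \otimes \mathsf{f}_\mu)$. Since $M_V$ is a $\RR G$-module homomorphism, it commutes with the action of every $g \in G$, and hence commutes with the averaging operator $\sym = \frac{1}{|G|}\sum_{g \in G} g \cdot (-)$. So
\[
\sym(\mathsf{f}_\lambda \mathsf{f}_\mu) \;=\; \sym(M_V(\mathsf{f}_\lambda \otimes \mathsf{f}_\mu)) \;=\; M_V(\sym(\mathsf{f}_\lambda \otimes \mathsf{f}_\mu)),
\]
so it suffices to show $\sym(\mathsf{f}_\lambda \otimes \mathsf{f}_\mu) = 0$ in $V \otimes V$.

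Next I would note that $\mathsf{f}_\lambda \otimes \mathsf{f}_\mu$ lies in the $G$-invariant subspace $V_\lambda \otimes V_\mu$ of $V \otimes V$, and that $\sym$ restricted to this subspace lands in $(V_\lambda \otimes V_\mu)^G$. By definition of the isotypic components, $V_\lambda$ is a direct sum of copies of the irreducible $S_\lambda$ and $V_\mu$ is a direct sum of copies of $S_\mu$, so $V_\lambda \otimes V_\mu$ is isomorphic (as an $\RR G$-module) to a direct sum of copies of $S_\lambda \otimes S_\mu$. Taking $G$-invariants commutes with direct sums, so $(V_\lambda \otimes V_\mu)^G$ is a direct sum of copies of $(S_\lambda \otimes S_\mu)^G$. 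Since $\lambda \neq \mu$, the irreducibles $S_\lambda$ and $S_\mu$ are non-isomorphic, so Lemma~\ref{lem:ineq-tensor} gives $(S_\lambda \otimes S_\mu)^G = \{0\}$, and hence $(V_\lambda \otimes V_\mu)^G = \{0\}$.

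Combining these two steps, $\sym(\mathsf{f}_\lambda \otimes \mathsf{f}_\mu) = 0$ and therefore $\sym(\mathsf{f}_\lambda \mathsf{f}_\mu) = M_V(0) = 0$, as required. I expect no real obstacle here; the one small thing to double-check is the routine fact that $V_\lambda \otimes V_\mu$ splits into copies of $S_\lambda \otimes S_\mu$, which follows immediately from the bilinearity of the tensor product and the definition of the isotypic decomposition.
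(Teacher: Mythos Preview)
Your proof is correct and follows essentially the same route as the paper: push $\sym$ through the $G$-equivariant multiplication map $M_V$ and invoke Lemma~\ref{lem:ineq-tensor} to conclude that the relevant invariants vanish. The only cosmetic difference is that the paper first reduces to generators $\phi_\lambda(u_\lambda)$, $\phi_\mu(u_\mu)$ of the isotypic components and applies Lemma~\ref{lem:ineq-tensor} directly in $S_\lambda\otimes S_\mu$, whereas you argue at the level of $V_\lambda\otimes V_\mu$ and decompose it as a sum of copies of $S_\lambda\otimes S_\mu$; these are equivalent.
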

\begin{proof}
	First, let $\phi_{\mu}\in \textup{Hom}_{G}(S_\mu,V)$ and $\phi_\lambda\in \textup{Hom}_G(S_\lambda,V)$, and 
	let $u_\mu\in S_\mu$ and $u_{\lambda}\in S_{\lambda}$. Then, since $M_V$ and $\phi_\mu\otimes \phi_\lambda$ are both $\RR$-linear 
	and commute with the action of $G$, we have that
	\[ \sym(\phi_\mu(u_\mu)\phi_\lambda(u_\lambda)) = \sym(M_V((\phi_\mu\otimes \phi_\lambda)(u_\mu\otimes u_\lambda))) = 
	M_V((\phi_{\mu}\otimes \phi_\lambda)(\sym(u_\mu\otimes u_\lambda))).\]
	Since $\mu\neq \lambda$, Lemma~\ref{lem:ineq-tensor} tells us that $\sym(u_\mu\otimes u_\lambda)\in (S_\mu\otimes S_\lambda)^G = \{0\}$ and 
	so $\sym(\phi_\mu(u_\mu)\phi_\lambda(u_\lambda)) = 0$. Finally, because any $\mathsf{f}_\mu\in V_{\mu}$ is a 
	linear combination of elements of the form $\phi_\mu(u_\mu)$ for $\phi_{\mu}\in \textup{Hom}_{G}(S_\mu,V)$
	and $u_\mu\in S_\mu$ (and similarly for any $\mathsf{f}_\lambda\in V_{\lambda}$), we have that $\sym(\mathsf{f}_\mu\mathsf{f}_\lambda)=0$ 
	by bilinearity.
\end{proof}

We now investigate what happens when we symmetrize products of certain elements
of $V_{{\lambda}}$.
\begin{lemma}
	\label{lem:sym-prod-iso}
	Let $\phi,\psi\in \textup{Hom}_G(S_\lambda,V)$ and $u,u'\in S_\lambda$. Let $B$ be the unique
	(up to scale), $G$-invariant bilinear form on $S_\lambda$. Then, for any $s_\lambda\in S_{\lambda}$,
	\[ B(s_\lambda,s_\lambda)\,\sym(\phi(u)\psi(u')) = B(u,u')\,\sym(\phi(s_\lambda)\psi(s_\lambda)).\]
\end{lemma}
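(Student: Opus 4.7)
The plan is to derive this directly from Lemma~\ref{lem:sym-tensor} by transporting the identity there, which lives inside $S_\lambda \otimes S_\lambda$, over to $\RR[\mathsf{x}]/\cI$ via a carefully chosen $G$-equivariant map. The key observation is that $\phi, \psi \in \textup{Hom}_G(S_\lambda, V)$ assemble into a $G$-equivariant map $\phi \otimes \psi : S_\lambda \otimes S_\lambda \to V \otimes V$, and composing with the multiplication map $M_V : V \otimes V \to \RR[\mathsf{x}]/\cI$ (which is also $G$-equivariant by construction) yields a $\RR G$-module homomorphism
\[ T := M_V \circ (\phi \otimes \psi) : S_\lambda \otimes S_\lambda \to \RR[\mathsf{x}]/\cI. \]
By definition, $T(u \otimes u') = \phi(u)\psi(u')$ and $T(s_\lambda \otimes s_\lambda) = \phi(s_\lambda)\psi(s_\lambda)$.

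The first step is to commute $\sym$ past $T$. Since $T$ intertwines the $G$-actions, we have $\sym(T(w)) = T(\sym(w))$ for every $w \in S_\lambda \otimes S_\lambda$; this is just the fact that averaging over $G$ commutes with $G$-equivariant linear maps. Applying this with $w = u \otimes u'$ gives
\[ \sym(\phi(u)\psi(u')) = T(\sym(u \otimes u')), \]
and analogously for $s_\lambda \otimes s_\lambda$.

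The second step is to invoke Lemma~\ref{lem:sym-tensor} for the irreducible $\RR G$-module $U = S_\lambda$, which tells us that
\[ B(s_\lambda, s_\lambda)\, \sym(u \otimes u') = B(u, u')\, \sym(s_\lambda \otimes s_\lambda) \]
in $S_\lambda \otimes S_\lambda$. Applying the linear map $T$ to both sides and using the identities from the previous step yields the claimed equality. There is essentially no obstacle here beyond assembling these pieces correctly; the real content was already packaged into Lemma~\ref{lem:sym-tensor} (which pinned down the one-dimensional space $(S_\lambda \otimes S_\lambda)^G$ via $B$), so the only work in this lemma is to realize that $\phi(u)\psi(u')$ is the image of $u \otimes u'$ under a $G$-equivariant map, which lets us pull the identity back to the ambient space $\RR[\mathsf{x}]/\cI$.
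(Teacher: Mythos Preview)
Your proof is correct and follows essentially the same approach as the paper: both arguments compose the $G$-equivariant maps $M_V$ and $\phi\otimes\psi$, commute $\sym$ through this composition, and then invoke Lemma~\ref{lem:sym-tensor}. The only cosmetic difference is that you name the composite $T = M_V\circ(\phi\otimes\psi)$ explicitly, whereas the paper keeps the two maps separate in the displayed equations.
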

\begin{proof}
	Since $M_V$ and $\phi\otimes \psi$ are both $\RR$-linear and commute with the action of $G$,
	\[ \sym(\phi(u)\psi(u')) = \sym(M_V((\phi\otimes \psi)(u\otimes u'))) = M_V((\phi\otimes \psi)(\sym(u\otimes u'))).\]
	By substituting $u=u'=s_\lambda$, we have
	\[ \sym(\phi(s_\lambda)\psi(s_\lambda)) = M_V((\phi\otimes \psi)(\sym(s_\lambda\otimes s_\lambda))).\]
	Relating $\sym(u\otimes u')$ and $\sym(s_\lambda\otimes s_\lambda)$ via Lemma~\ref{lem:sym-tensor}, and
	using the fact that $M_V$ and $\phi\otimes \psi$ are both $\RR$-linear,
	we obtain the stated result.
\end{proof}

Our next aim is to describe the structure of $\sym(\mathsf{f}^2)$ for $\mathsf{f}\in V_{\bm{\lambda}}$.

\begin{proposition}
	\label{prop:sym-prod-iso-general}
Let $\phi_1,\phi_2,\ldots,\phi_{m_\lambda}$ be a basis for $\textup{Hom}_{G}(S_\lambda,V)$, let $s_\lambda\in S_\lambda$ be non-zero, and let
\[ Y^\lambda_{j\ell} = \sym(\phi_j(s_\lambda)\phi_\ell(s_\lambda))\]
for all $j,\ell\in [m_\lambda]$. If $\mathsf{f}\in V_\lambda$, then there exists a $m_\lambda\times m_\lambda$ psd matrix $Q^\lambda$ such that
\[ \sym(\mathsf{f}^2) = \sum_{j,\ell\in [m_\lambda]} Q^\lambda_{j\ell} Y^\lambda_{j\ell}.\]
\end{proposition}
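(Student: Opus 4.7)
The plan is to reduce the proposition to a direct application of Lemma~\ref{lem:sym-prod-iso}, after first expressing $\mathsf{f}$ in a convenient form relative to the chosen basis of $\textup{Hom}_G(S_\lambda,V)$.

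First, I would show that any $\mathsf{f}\in V_\lambda$ can be written as $\mathsf{f} = \sum_{k=1}^{m_\lambda} \phi_k(v_k)$ for some $v_1,\ldots,v_{m_\lambda}\in S_\lambda$. By definition, $V_\lambda$ is spanned by elements of the form $\phi(u)$ with $\phi\in \textup{Hom}_G(S_\lambda,V)$ and $u\in S_\lambda$, so $\mathsf{f} = \sum_i \alpha_i \psi^{(i)}(u^{(i)})$ for finitely many such terms. Expanding each $\psi^{(i)} = \sum_k c_{ik}\phi_k$ in the chosen basis and collecting terms gives $\mathsf{f} = \sum_k \phi_k(v_k)$ where $v_k = \sum_i \alpha_i c_{ik} u^{(i)} \in S_\lambda$.

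Next, I would expand $\sym(\mathsf{f}^2)$ using bilinearity:
\[
\sym(\mathsf{f}^2) \;=\; \sum_{j,\ell=1}^{m_\lambda} \sym\bigl(\phi_j(v_j)\,\phi_\ell(v_\ell)\bigr),
\]
and then apply Lemma~\ref{lem:sym-prod-iso} to each summand. This yields
\[
\sym\bigl(\phi_j(v_j)\,\phi_\ell(v_\ell)\bigr) \;=\; \frac{B(v_j,v_\ell)}{B(s_\lambda,s_\lambda)}\, \sym\bigl(\phi_j(s_\lambda)\,\phi_\ell(s_\lambda)\bigr) \;=\; \frac{B(v_j,v_\ell)}{B(s_\lambda,s_\lambda)}\, Y^\lambda_{j\ell},
\]
where $B$ is the (unique up to scale) $G$-invariant bilinear form on $S_\lambda$.

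Finally, I would define $Q^\lambda_{j\ell} := B(v_j,v_\ell)/B(s_\lambda,s_\lambda)$. Summing the preceding identity over $j,\ell$ recovers the displayed equation. It remains to verify that $Q^\lambda$ is psd: by Lemma~\ref{lem:bil}, $B$ is positive definite on $S_\lambda$, so $[B(v_j,v_\ell)]_{j,\ell}$ is a Gram matrix and hence psd, while $B(s_\lambda,s_\lambda)>0$ since $s_\lambda\neq 0$. Thus $Q^\lambda$ is a positive scalar multiple of a psd matrix, and so is itself psd.

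The proof is essentially bookkeeping once the right expansion of $\mathsf{f}$ is fixed. The only conceptual step is recognizing that the matrix $[B(v_j,v_\ell)]_{j,\ell}$ is a Gram matrix with respect to an inner product (here supplied by Lemma~\ref{lem:bil}), which is what ultimately forces the coefficient matrix to be psd. No obstacle beyond invoking the earlier lemmas is anticipated.
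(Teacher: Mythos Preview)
Your proposal is correct and follows essentially the same approach as the paper's proof: express $\mathsf{f}$ as $\sum_k \phi_k(v_k)$, expand $\sym(\mathsf{f}^2)$ bilinearly, apply Lemma~\ref{lem:sym-prod-iso} termwise, and define $Q^\lambda_{j\ell} = B(v_j,v_\ell)/B(s_\lambda,s_\lambda)$, which is psd since $B$ is positive definite. You in fact give slightly more detail than the paper in justifying the first step and in naming the Gram-matrix observation.
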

\begin{proof}
	Let $\mathsf{f}\in V_{\lambda} = \{ \phi(u): \phi\in \textup{Hom}_G(S_\lambda,V),\; u\in S_\lambda\}$. Then
	a straightforward argument shows that there are $u_1,u_2,\ldots,u_{m_\lambda}\in S_\lambda$ such that
	\[ \mathsf{f} = \sum_{k\in [m_\lambda]} \phi_i(u_i).\]
	We expand $\sym(\mathsf{f}^2)$ in terms of $\sym(\phi_i(u_i)\phi_j(u_j))$ and apply Lemma~\ref{lem:sym-prod-iso}
	 to get
	\begin{equation*}
	 \sym(\mathsf{f}^2)  = \sum_{i,j\in [m_\lambda]} \sym(\phi_i(u_i)\phi_j(u_j)) =  \sum_{i,j\in [m_\lambda]}
	\frac{B(u_i,u_j)}{B(s_\lambda,s_\lambda)}\,\sym(\phi_i(s_\lambda)\phi_j(s_\lambda)).
	\end{equation*}
	Since $u\mapsto B(u,u)$ is a positive definite quadratic form and $s_\lambda \neq 0$, if we define $Q_{ij}^\lambda := B(u_i,u_j)/B(s_\lambda,s_\lambda)$,
	we see that $Q^\lambda$ is psd. Since $Y_{ij}^\lambda = \sym(\phi_i(s_\lambda)\phi_j(s_\lambda))$,
	we have obtained an expression for $\sym(\mathsf{f}^2)$ in the required form.
\end{proof}
We have seen how to symmetrize products from different isotypic components of $V$ and how to symmetrize squares of polynomials
from the same isotypic component. We can now combine these earlier arguments in a straightforward way to
desribe symmetry-reduced non-negativity certificates for sos that are invariant under the action of $G$.
This is the main result of the appendix (which is essentially the main result of~\cite{GatermannParrilo}).
\begin{theorem}
	\label{thm:GP-app}
	Let $V$ be a finite-dimensional $G$-invariant subspace of $\RR[{\mathsf{x}}]/\cI$
	with isotypic decomposition $V = \bigoplus_{\lambda\in \Lambda} V_\lambda$
	and corresponding multiplicities $m_\lambda$. For each $\lambda\in \Lambda$, fix a non-zero element $s_\lambda \in S_\lambda$.
	Let $\mathsf{b}_1,\ldots,\mathsf{b}_{m_\lambda}$ be a basis for the subspace $W_{s_\lambda}$. Define for each $\lambda\in \Lambda$
	\[ Y_{ij}^\lambda = \sym(\mathsf{b}_i\mathsf{b}_j)\]
	for $i,j\in [m_\lambda]$.
	Suppose $\mathsf{p}\in \RR[{\mathsf{x}}]/\cI$ is invariant under the action of $G$ and is $V$-sos.
	Then there exist $m_\lambda\times m_\lambda$ psd matrices $Q^\lambda$ such that
	\[ \mathsf{p} = \sum_{\lambda\in \Lambda} \sum_{i,j\in [m_\lambda]} Q_{ij}^\lambda Y_{ij}^\lambda.\]
\end{theorem}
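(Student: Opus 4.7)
The plan is to assemble Theorem~\ref{thm:GP-app} by combining Lemma~\ref{lem:sym-prod-zero} (cross-isotypic products symmetrize to zero) with Proposition~\ref{prop:sym-prod-iso-general} (squares within a single isotypic component symmetrize into the $Y^\lambda$ template), together with the observation that $\mathsf{p}$ being $G$-invariant lets us replace $\mathsf{p}$ by $\sym(\mathsf{p})$ without loss.

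First, since $\mathsf{p}$ is $V$-sos, I would fix an explicit expression $\mathsf{p} = \sum_{k=1}^{N} \mathsf{f}_k^2$ with each $\mathsf{f}_k \in V$. Using the isotypic decomposition $V = \bigoplus_{\lambda \in \Lambda} V_{\lambda}$, I would decompose each summand as $\mathsf{f}_k = \sum_{\lambda \in \Lambda} \mathsf{f}_{k,\lambda}$ with $\mathsf{f}_{k,\lambda}\in V_{\lambda}$, and then expand
\[
\mathsf{f}_k^2 = \sum_{\lambda,\mu \in \Lambda} \mathsf{f}_{k,\lambda}\mathsf{f}_{k,\mu}.
\]
Invariance of $\mathsf{p}$ gives $\mathsf{p} = \sym(\mathsf{p}) = \sum_k \sym(\mathsf{f}_k^2)$, which is the key step letting me symmetrize each $\mathsf{f}_k^2$ term-by-term. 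Applying $\sym$ to the expansion above and invoking Lemma~\ref{lem:sym-prod-zero} on every off-diagonal pair $(\lambda,\mu)$ with $\lambda \neq \mu$ collapses the double sum to
\[
\sym(\mathsf{f}_k^2) = \sum_{\lambda \in \Lambda} \sym(\mathsf{f}_{k,\lambda}^2).
\]

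Next, for each fixed $k$ and $\lambda$, since $\mathsf{f}_{k,\lambda}\in V_\lambda$, I would apply Proposition~\ref{prop:sym-prod-iso-general} directly (using the same fixed non-zero $s_\lambda\in S_\lambda$ and basis $\mathsf{b}_1,\ldots,\mathsf{b}_{m_\lambda}$ of $W_{s_\lambda}$ stipulated in the theorem statement) to produce a psd matrix $Q^{k,\lambda}$ such that $\sym(\mathsf{f}_{k,\lambda}^2) = \sum_{i,j} Q^{k,\lambda}_{ij} Y^\lambda_{ij}$. Substituting and swapping the order of summation yields
\[
\mathsf{p} \;=\; \sum_{k=1}^{N}\sum_{\lambda\in\Lambda}\sum_{i,j\in[m_\lambda]} Q^{k,\lambda}_{ij} Y^\lambda_{ij} \;=\; \sum_{\lambda\in\Lambda}\sum_{i,j\in[m_\lambda]} \Bigl(\sum_{k=1}^{N} Q^{k,\lambda}_{ij}\Bigr) Y^\lambda_{ij}.
\]
Setting $Q^\lambda := \sum_{k=1}^{N} Q^{k,\lambda}$ gives the desired expression, and $Q^\lambda$ is psd as a sum of psd matrices.

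Honestly, there is no real obstacle here: all of the representation-theoretic content has been packaged into Lemma~\ref{lem:sym-prod-zero} and Proposition~\ref{prop:sym-prod-iso-general}. The only thing that deserves a bit of care is the first move, namely the observation that $G$-invariance permits us to apply $\sym$ to the entire sos expression and therefore symmetrize term-by-term; without that step, the cross-isotypic cancellation lemma would not apply. Everything else is bookkeeping: the isotypic decomposition of $V$, bilinearity of products in $\RR[\mathsf{x}]/\cI$, and the elementary fact that a sum of psd matrices is psd.
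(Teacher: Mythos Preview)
Your proof is correct and follows essentially the same approach as the paper's. The only minor difference is that the paper explicitly invokes Lemma~\ref{lem:mult-iso} at the outset to translate the basis $\mathsf{b}_1,\ldots,\mathsf{b}_{m_\lambda}$ of $W_{s_\lambda}$ into a basis $\phi_1,\ldots,\phi_{m_\lambda}$ of $\textup{Hom}_G(S_\lambda,V)$ with $\phi_i(s_\lambda)=\mathsf{b}_i$, so that the $Y^\lambda_{ij}$ in the theorem statement coincide with those appearing in Proposition~\ref{prop:sym-prod-iso-general}; you implicitly assume this identification when you apply the proposition ``using the basis $\mathsf{b}_1,\ldots,\mathsf{b}_{m_\lambda}$ of $W_{s_\lambda}$.''
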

\begin{proof}
	Fix $\lambda\in \Lambda$. By Lemma~\ref{lem:mult-iso}, the map $\phi \mapsto \phi(s_\lambda)$ gives an isomorphism between $W_{s_\lambda}$
	and $\textup{Hom}_G(S_\lambda,V)$. Hence, for each $i\in [m_\lambda]$, there is $\phi_i\in \textup{Hom}_G(S_\lambda,V)$ such that
	$\mathsf{b}_i = \phi_i(s_\lambda)$. Moreover, the $\phi_i$ for $i\in [m_\lambda]$ form a basis for $\textup{Hom}_G(S_\lambda,V)$.
	As such,
	\[ Y_{ij}^\lambda = \sym(\mathsf{b}_i\mathsf{b}_j) = \sym(\phi_i(s_\lambda)\phi_j(s_\lambda))\]
	for $i,j\in [m_\lambda]$.
	Now suppose that $\mathsf{p}$ is $G$-invariant and $V$-sos.
	Then, since $V = \bigoplus_\lambda V_\lambda$, we have that $\mathsf{f}_k = \sum_{\lambda\in\Lambda}  \mathsf{f}_{k\lambda}$ where
	each $\mathsf{f}_{k\lambda}\in V_{\lambda}$. Hence
	\[ \mathsf{p} = \sum_{k} \sum_{\lambda,\mu \in \Lambda} \mathsf{f}_{k\lambda}\mathsf{f}_{k\mu}.\]
	From Lemma~\ref{lem:sym-prod-zero}, we have that $\sym(\mathsf{f}_{k\lambda}\mathsf{f}_{k\mu}) = 0$ whenever $\lambda \neq \mu$.
	Using the fact that $\mathsf{p}$ is fixed by the action of $G$,
	\[ \mathsf{p} = \sym(\mathsf{p}) = \sum_k \sum_{\lambda,\mu\in \Lambda} \sym(\mathsf{f}_{k\lambda}\mathsf{f}_{k\mu}) =  \sum_k \sum_{\lambda \in \Lambda}
	\sym(\mathsf{f}_{k\lambda}^2)\]
	where each $\mathsf{f}_{k\lambda}\in V_\lambda$. Applying Proposition~\ref{prop:sym-prod-iso-general} for each $\lambda$ and each $k$,
	we see that there exist psd matrices $Q^{\lambda,k}$ such that
	\[ \mathsf{p} = \sum_{k}\sum_{\lambda \in \Lambda} \sum_{i,j\in [m_\lambda]}Y^\lambda_{ij} Q^{\lambda,k}_{ij} =
	\sum_{\lambda \in \Lambda}\sum_{j,\ell\in [m_\lambda]} Y^\lambda_{ij}\left(\sum_k Q^{\lambda,k}_{ij}\right).\]
	Defining $Q^\lambda = \sum_k Q^{\lambda,k}$, which is again psd, gives an expression for $\mathsf{p}$ as
	\[ \mathsf{p} = \sum_{\lambda\in \Lambda} \sum_{i,j\in [m_\lambda]}Y^\lambda_{ij}Q^\lambda_{ij} \]
	as we require.
\end{proof}

\subsection{Specialization to Theorem~\ref{thm:GP}}

This specializes to give Theorem~\ref{thm:GP} as follows. Let $q = \binom{n}{2}$ and let
$\cI_n = \langle \mathsf{x}_{ij}^2 = \mathsf{x}_{ij}\;\;1\leq i<j\leq n\rangle$ denote the square-free ideal in $\RR[{\mathsf{x}}]$.
Let $\RR[{\mathsf{x}}]/\cI_n = \RR[\cV_n]$ be the corresponding quotient of the polynomial ring. Let $G = \fS_n$ and suppose that $\fS_n$ acts on monomials by
$\fs\cdot \mathsf{x}_{ij} = \mathsf{x}_{\fs(i)\fs(j)}$. Let $V = \RR[\cV_n]_{\leq d}$
be the $\fS_n$-invariant subspace of square-free polynomials of degree at most $d$.

To relate Theorem~\ref{thm:GP} to Theorem~\ref{thm:GP-app}, we need to show that the subspaces $W_{\tau_{\bm{\lambda}}}$ defined in Section~\ref{sec:prelim}
are isomorphic as vector spaces to $W_s$ for some non-zero $s\in S_{\bm{\lambda}}$, and hence to the multiplicity space $\textup{Hom}_{\fS_n}(S_{\bm{\lambda}},V)$ 
(via Lemma~\ref{lem:mult-iso}). 
The argument requires the following fact, which follows from Young's rule (see, e.g.,~\cite[Theorem 4.6]{RST} for a statement of Young's rule in the required form)
and the fact that the diagonal Kostka numbers $K_{\bm{\lambda}\bm{\lambda}}$ are one~(see, e.g.,~\cite[Example 2.11.4]{SaganBook}).
\begin{lemma}
If $\tau_{\bm{\lambda}}$ is a tableau of shape $\bm{\lambda}$, then $\dim(S_{\bm{\lambda}}^{\fR_{\tau_{\bm{\lambda}}}})=1$.
\end{lemma}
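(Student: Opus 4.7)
The plan is to realize $\dim(S_{\bm{\lambda}}^{\fR_{\tau_{\bm{\lambda}}}})$ as the multiplicity of $S_{\bm{\lambda}}$ inside a Young permutation module, and then invoke the two facts flagged in the hint: Young's rule, and the combinatorial identity $K_{\bm{\lambda}\bm{\lambda}} = 1$.

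First, I would set up the Frobenius reciprocity identification. For any $\fS_n$-module $U$ and subgroup $H \leq \fS_n$, the space $U^H$ of $H$-fixed vectors is naturally isomorphic to $\textup{Hom}_H(\mathbf{1}, \textup{Res}^{\fS_n}_H U)$, where $\mathbf{1}$ denotes the trivial $H$-module. Applying this with $H = \fR_{\tau_{\bm{\lambda}}}$ and $U = S_{\bm{\lambda}}$, Frobenius reciprocity gives
\[
 \dim(S_{\bm{\lambda}}^{\fR_{\tau_{\bm{\lambda}}}}) \;=\; \dim \textup{Hom}_{\fR_{\tau_{\bm{\lambda}}}}(\mathbf{1}, S_{\bm{\lambda}}) \;=\; \dim \textup{Hom}_{\fS_n}\!\left(\textup{Ind}^{\fS_n}_{\fR_{\tau_{\bm{\lambda}}}} \mathbf{1},\, S_{\bm{\lambda}}\right).
\]

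Next, I would identify the induced module $\textup{Ind}^{\fS_n}_{\fR_{\tau_{\bm{\lambda}}}} \mathbf{1}$ with the Young permutation module $M^{\bm{\lambda}}$, spanned by tabloids of shape $\bm{\lambda}$; this is standard because $\fR_{\tau_{\bm{\lambda}}}$ is precisely the stabilizer of the tabloid obtained from $\tau_{\bm{\lambda}}$. By Young's rule, $M^{\bm{\lambda}}$ decomposes into irreducibles as $M^{\bm{\lambda}} = \bigoplus_{\bm{\mu}} K_{\bm{\mu}\bm{\lambda}}\, S_{\bm{\mu}}$, where $K_{\bm{\mu}\bm{\lambda}}$ is the Kostka number counting semistandard Young tableaux of shape $\bm{\mu}$ and content $\bm{\lambda}$. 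Consequently,
\[
 \dim \textup{Hom}_{\fS_n}(M^{\bm{\lambda}}, S_{\bm{\lambda}}) \;=\; K_{\bm{\lambda}\bm{\lambda}}.
\]

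Finally, I would invoke the well-known fact $K_{\bm{\lambda}\bm{\lambda}} = 1$: the only semistandard tableau of shape $\bm{\lambda}$ with content $\bm{\lambda}$ is the one with every entry in row $i$ equal to $i$, since semistandardness forces each row $i$ to contain only entries $\geq i$ and each entry $i$ is constrained to appear $\lambda_i$ times in the first $i$ rows. Combining the three displayed equalities yields $\dim(S_{\bm{\lambda}}^{\fR_{\tau_{\bm{\lambda}}}}) = 1$. There is no real obstacle here; the only point requiring care is the identification of $\textup{Ind}^{\fS_n}_{\fR_{\tau_{\bm{\lambda}}}} \mathbf{1}$ with $M^{\bm{\lambda}}$ in the conventions of the paper, but this is a routine unpacking.
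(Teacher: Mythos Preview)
Your proposal is correct and takes essentially the same approach as the paper: the paper simply states that the result follows from Young's rule together with $K_{\bm{\lambda}\bm{\lambda}}=1$, and you have spelled out the Frobenius reciprocity step linking $S_{\bm{\lambda}}^{\fR_{\tau_{\bm{\lambda}}}}$ to the multiplicity of $S_{\bm{\lambda}}$ in $M^{\bm{\lambda}}$ that makes this work.
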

\begin{lemma}
\label{lem:mult-iso-2}
	Fix a standard tableau $\tau_{\bm{\lambda}}$ of shape $\bm{\lambda}$. If
	 $s_{\tau_{\bm{\lambda}}}\in S_{\bm{\lambda}}^{\fR_{\tau_\lambda}}$ is non-zero, then
	\[ V_{\bm{\lambda}}^{\fR_{\tau_{\bm{\lambda}}}} = W_{\tau_{\bm{\lambda}}} = W_{s_{\tau_{\bm{\lambda}}}} =
	\{\phi(s_{\tau_{\bm{\lambda}}}): \phi\in \textup{Hom}_{\fS_n}(S_{\bm{\lambda}},V)\}.\]
\end{lemma}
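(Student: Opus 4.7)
The first equality $V_{\bm{\lambda}}^{\fR_{\tau_{\bm{\lambda}}}} = W_{\tau_{\bm{\lambda}}}$ is simply Definition~\ref{def:W-tau-lambda}, and the last set in the statement is by definition the subspace $W_{s_{\tau_{\bm{\lambda}}}}$ from~\eqref{eq:Ws}. So the real content is the middle equality $W_{\tau_{\bm{\lambda}}} = W_{s_{\tau_{\bm{\lambda}}}}$, i.e., $V_{\bm{\lambda}}^{\fR_{\tau_{\bm{\lambda}}}} = \{\phi(s_{\tau_{\bm{\lambda}}}) : \phi \in \textup{Hom}_{\fS_n}(S_{\bm{\lambda}},V)\}$.

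The plan is to show the two inclusions. For $W_{s_{\tau_{\bm{\lambda}}}} \subseteq V_{\bm{\lambda}}^{\fR_{\tau_{\bm{\lambda}}}}$, I would observe that for any $\phi \in \textup{Hom}_{\fS_n}(S_{\bm{\lambda}},V)$, the element $\phi(s_{\tau_{\bm{\lambda}}})$ lies in $V_{\bm{\lambda}}$ by the definition of the isotypic component, and is fixed by every $\fs \in \fR_{\tau_{\bm{\lambda}}}$ because $\fs \cdot \phi(s_{\tau_{\bm{\lambda}}}) = \phi(\fs \cdot s_{\tau_{\bm{\lambda}}}) = \phi(s_{\tau_{\bm{\lambda}}})$, using $\fS_n$-equivariance of $\phi$ and the choice $s_{\tau_{\bm{\lambda}}} \in S_{\bm{\lambda}}^{\fR_{\tau_{\bm{\lambda}}}}$.

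For the reverse inclusion $V_{\bm{\lambda}}^{\fR_{\tau_{\bm{\lambda}}}} \subseteq W_{s_{\tau_{\bm{\lambda}}}}$, I would pick a basis $\phi_1,\ldots,\phi_{m_{\bm{\lambda}}}$ of $\textup{Hom}_{\fS_n}(S_{\bm{\lambda}},V)$ and use the standard fact that the $\fS_n$-equivariant map $S_{\bm{\lambda}} \otimes \textup{Hom}_{\fS_n}(S_{\bm{\lambda}},V) \to V_{\bm{\lambda}}$, sending $s \otimes \phi$ to $\phi(s)$, is an isomorphism (where $\fS_n$ acts trivially on the second factor). Consequently, each $v \in V_{\bm{\lambda}}$ has a \emph{unique} representation $v = \sum_{i=1}^{m_{\bm{\lambda}}} \phi_i(s_i)$ with $s_i \in S_{\bm{\lambda}}$. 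If moreover $\fs \cdot v = v$ for every $\fs \in \fR_{\tau_{\bm{\lambda}}}$, then $\sum_i \phi_i(\fs \cdot s_i) = \sum_i \phi_i(s_i)$, and uniqueness forces $\fs \cdot s_i = s_i$ for each $i$. Thus each $s_i \in S_{\bm{\lambda}}^{\fR_{\tau_{\bm{\lambda}}}}$, which by the preceding one-dimensionality lemma equals $\RR \cdot s_{\tau_{\bm{\lambda}}}$. Writing $s_i = \alpha_i s_{\tau_{\bm{\lambda}}}$ and setting $\phi := \sum_i \alpha_i \phi_i \in \textup{Hom}_{\fS_n}(S_{\bm{\lambda}},V)$, we obtain $v = \phi(s_{\tau_{\bm{\lambda}}}) \in W_{s_{\tau_{\bm{\lambda}}}}$.

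The main obstacle is the appeal to the tensor-product isomorphism $S_{\bm{\lambda}} \otimes \textup{Hom}_{\fS_n}(S_{\bm{\lambda}},V) \cong V_{\bm{\lambda}}$, which is the only non-trivial piece of representation theory invoked; the rest is bookkeeping. An alternative route that avoids this would be a dimension count: Lemma~\ref{lem:mult-iso} gives $\dim W_{s_{\tau_{\bm{\lambda}}}} = m_{\bm{\lambda}}$, while $V_{\bm{\lambda}} \cong S_{\bm{\lambda}}^{\oplus m_{\bm{\lambda}}}$ combined with the preceding lemma yields $\dim V_{\bm{\lambda}}^{\fR_{\tau_{\bm{\lambda}}}} = m_{\bm{\lambda}} \dim S_{\bm{\lambda}}^{\fR_{\tau_{\bm{\lambda}}}} = m_{\bm{\lambda}}$, so the forward inclusion already forces equality. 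Either approach ultimately rests on the same structural input about the isotypic decomposition.
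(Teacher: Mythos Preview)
Your proof is correct and follows the same overall structure as the paper's: identify the first and last equalities as definitional, then prove $W_{s_{\tau_{\bm{\lambda}}}} = V_{\bm{\lambda}}^{\fR_{\tau_{\bm{\lambda}}}}$ by two inclusions, with the forward inclusion handled exactly as you do.

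The only difference is in the reverse inclusion. You invoke the isomorphism $S_{\bm{\lambda}}\otimes \textup{Hom}_{\fS_n}(S_{\bm{\lambda}},V)\cong V_{\bm{\lambda}}$ to get a \emph{unique} expression $w=\sum_i \phi_i(s_i)$ and then use uniqueness to force each $s_i\in S_{\bm{\lambda}}^{\fR_{\tau_{\bm{\lambda}}}}$. The paper avoids this extra structural fact: it simply writes $w=\sum_i \phi_i(v_i)$ (any such expression, coming straight from the definition of $V_{\bm{\lambda}}$ as a span), and then applies the Reynolds operator $\sym_{\fR_{\tau_{\bm{\lambda}}}}$, using equivariance of the $\phi_i$ to push it inside:
\[
w=\sym_{\fR_{\tau_{\bm{\lambda}}}}(w)=\sum_i \phi_i\bigl(\sym_{\fR_{\tau_{\bm{\lambda}}}}(v_i)\bigr),
\]
after which one-dimensionality of $S_{\bm{\lambda}}^{\fR_{\tau_{\bm{\lambda}}}}$ finishes as in your argument. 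This is marginally more self-contained, since the tensor isomorphism you use (while standard) relies on $\textup{End}_{\fS_n}(S_{\bm{\lambda}})=\RR$, i.e., absolute irreducibility of the $S_{\bm{\lambda}}$. Your alternative dimension-count route is also valid and rests on the same input.
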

\begin{proof}
	If $w\in W_{s_{\tau_{\bm{\lambda}}}}$ then there is $\phi\in \textup{Hom}_{\fS_n}(S_{\bm{\lambda}},V)$ such that $\phi(s_{\tau_{\bm{\lambda}}}) = w$.
	If $\fs\in \fR_{\tau_{\bm{\lambda}}}$ then
	\[ \fs\cdot w = \fs \cdot \phi(s_{\tau_{\bm{\lambda}}}) = \phi(\fs \cdot s_{\tau_{\bm{\lambda}}}) = \phi(s_{\tau_{\bm{\lambda}}}) = w.\]
 	Hence $W_{s_{\bm{\lambda}}}\subseteq V_{\lambda}^{\fR_{\tau_{\bm{\lambda}}}} = W_{\tau_{\bm{\lambda}}}$. On the other hand,
	if $w\in W_{\tau_{\bm{\lambda}}}$, then there are $v_1,\ldots,v_{m_{\bm{\lambda}}}\in S_{\bm{\lambda}}$ and
	$\phi_{1},\ldots,\phi_{m_{\bm{\lambda}}}$ such that
	$w = \sum_{i\in [m_{\bm{\lambda}}]} \phi_i(v_i)\in V_{\bm{\lambda}}$ and $w = \sym_{\fR_{\tau_{\bm{\lambda}}}}(w)$. Then
	\[ w = \sym_{\fR_{\tau_{\bm{\lambda}}}}(w) = \sum_{i\in [m_{\bm{\lambda}}]} \sym_{\fR_{\tau_{\lambda}}}(\phi_i(v_i))
	= \sum_{i\in [m_{\bm{\lambda}}]} \phi_i(\sym_{\fR_{\tau_{\lambda}}}(v_i)).\]
	Since $S_{\bm{\lambda}}^{\fR_{\tau_{\bm{\lambda}}}}$ is one-dimensional, it is spanned by $s_{\tau_{\bm{\lambda}}}$. 
	Hence for any $v_i\in S_{\bm{\lambda}}$, we have that $\sym_{\fR_{\tau_{\bm{\lambda}}}}(v_i)$
	is a scalar multiple of $s_{\tau_{\bm{\lambda}}}$. It then follows that $w \in W_{s_{\tau_{\bm{\lambda}}}}$, giving the reverse inclusion.
\end{proof}

Applying Theorem~\ref{thm:GP-app} in this setting, and choosing $s_{\bm{\lambda}}:= s_{\tau_{\bm{\lambda}}}$ (defined in Lemma~\ref{lem:mult-iso-2}) 
for each $\bm{\lambda}$, yields the statement of Theorem~\ref{thm:GP}.

\newpage

% !TEX root =  main.tex
\section{Ramsey Number $R(3,3)$}
\label{app:Ramsey sos}

In this section, we prove the following lemma.
\begin{lemma} \label{lem:sos proof of Ramsey}
\begin{align*}
\left(2-\sum_{2\leq i  \leq 6} \mathsf{x}_{1i}\right)^2 + \left(2-\sum_{2\leq i  \leq 6} (1-\mathsf{x}_{1i}) \right)^2 &\equiv 
-1 \mod \mathcal{I}.
\end{align*}
\end{lemma}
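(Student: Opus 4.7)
The plan is to reduce the claimed identity to two simple cubic identities in the scalar $r:=\sum_{i=2}^{6}\mathsf{x}_{1i}$, then close the gap with a Bezout argument. Using $\mathsf{x}_{1i}^2\equiv \mathsf{x}_{1i}\pmod{\mathcal{I}}$ we have $\sum_{i=2}^{6}(1-\mathsf{x}_{1i})\equiv 5-r$, so the left-hand side of the lemma equals $(2-r)^2+(r-3)^2=2r^2-10r+13\equiv 2(r^2-5r+7)-1\pmod{\mathcal{I}}$. Thus it suffices to show $r^2-5r+7\equiv 0\pmod{\mathcal{I}}$, or equivalently --- as the Bezout argument below will reveal --- that $1\in \mathcal{I}$, in which case every identity modulo $\mathcal{I}$ becomes trivial.

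The heart of the proof is to establish the two univariate relations
\[
r(r-1)(r-2)\equiv 0 \pmod{\mathcal{I}} \quad\text{and}\quad (r-3)(r-4)(r-5)\equiv 0 \pmod{\mathcal{I}}.
\]
Modulo the square-free relations $\mathsf{x}_{1i}^2\equiv \mathsf{x}_{1i}$, one checks that $r(r-1)(r-2)\equiv 6\sum_{\{i,j,k\}\subseteq \{2,\ldots,6\}}\mathsf{x}_{1i}\mathsf{x}_{1j}\mathsf{x}_{1k}$ and, similarly, $(r-3)(r-4)(r-5)\equiv -6\sum_{\{i,j,k\}\subseteq \{2,\ldots,6\}}(1-\mathsf{x}_{1i})(1-\mathsf{x}_{1j})(1-\mathsf{x}_{1k})$. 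It therefore suffices to show, for each triple $\{i,j,k\}\subseteq\{2,\ldots,6\}$, that $\mathsf{x}_{1i}\mathsf{x}_{1j}\mathsf{x}_{1k}\equiv 0$ and $(1-\mathsf{x}_{1i})(1-\mathsf{x}_{1j})(1-\mathsf{x}_{1k})\equiv 0$ modulo $\mathcal{I}$. I focus on the first; the second is symmetric. The three ``red triangle through vertex $1$'' generators $\mathsf{x}_{1a}\mathsf{x}_{1b}\mathsf{x}_{ab}\in \mathcal{I}$, one for each pair $\{a,b\}\subseteq \{i,j,k\}$, give $\mathsf{x}_{1a}\mathsf{x}_{1b}\equiv \mathsf{x}_{1a}\mathsf{x}_{1b}(1-\mathsf{x}_{ab})\pmod{\mathcal{I}}$. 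Multiplying by the remaining $\mathsf{x}_{1c}$ factor and iterating these substitutions over all three pairs yields
\[
\mathsf{x}_{1i}\mathsf{x}_{1j}\mathsf{x}_{1k}\equiv \mathsf{x}_{1i}\mathsf{x}_{1j}\mathsf{x}_{1k}(1-\mathsf{x}_{ij})(1-\mathsf{x}_{ik})(1-\mathsf{x}_{jk})\pmod{\mathcal{I}},
\]
and the right-hand side is $\equiv 0$ by the ``no blue triangle on $\{i,j,k\}$'' generator of $\mathcal{I}$.

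Finally, as elements of $\RR[r]$, the cubics $r(r-1)(r-2)$ and $(r-3)(r-4)(r-5)$ have disjoint root sets $\{0,1,2\}$ and $\{3,4,5\}$, so their greatest common divisor is $1$. By Bezout's identity there exist $A(r),B(r)\in \RR[r]$ with
\[
A(r)\,r(r-1)(r-2)+B(r)\,(r-3)(r-4)(r-5)=1;
\]
reducing modulo $\mathcal{I}$ yields $1\equiv 0\pmod{\mathcal{I}}$, and the lemma follows at once.

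The step I expect to be the main obstacle is the chaining of the three red-triangle generators with the single blue-triangle generator to derive the ``pure'' cubic relation $\mathsf{x}_{1i}\mathsf{x}_{1j}\mathsf{x}_{1k}\equiv 0\pmod{\mathcal{I}}$ cleanly for every triple $\{i,j,k\}\subseteq \{2,\ldots,6\}$; once both cubic identities in $r$ are in hand, the Bezout closure is automatic.
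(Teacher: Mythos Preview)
Your proof is correct, and in fact you prove something stronger than the paper does at this point: you show directly that $1\in\mathcal{I}$, after which the stated congruence is trivial. Both you and the paper start from the same key algebraic fact, the ``claw'' relations $\mathsf{x}_{1i}\mathsf{x}_{1j}\mathsf{x}_{1k}\in\mathcal{I}$ and $(1-\mathsf{x}_{1i})(1-\mathsf{x}_{1j})(1-\mathsf{x}_{1k})\in\mathcal{I}$, and your derivation of these by chaining the three red-triangle generators through vertex $1$ with the blue-triangle generator on $\{i,j,k\}$ is essentially the paper's argument rewritten.

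Where you diverge is in how you use the claw relations. The paper extracts from them the single quadratic relation $\sum_{2\le i<j\le 6}\mathsf{x}_{1i}\mathsf{x}_{1j}\equiv r-1$ (by expanding $\prod_{i=2}^{6}(1-\mathsf{x}_{1i})\equiv 0$ and killing all degree $\ge 3$ terms with the claws), and then computes $(2-r)^2\equiv 2-r$ and $(r-3)^2\equiv r-3$ directly, summing to $-1$. You instead package the claws as the two coprime cubics $r(r-1)(r-2)\equiv 0$ and $(r-3)(r-4)(r-5)\equiv 0$ and invoke B\'ezout. Your route is slicker and more conceptual, but it slightly overshoots the target: the surrounding theorem is meant to exhibit an explicit sos certificate that the variety is empty, and your B\'ezout step establishes emptiness by an independent (non-sos) ideal-membership argument, which makes the sos wrapper incidental rather than the content. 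The paper's computation, by contrast, keeps the identity at the level of an explicit low-degree reduction, which is what one wants if the goal is to display the sos certificate itself.
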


Recall that the ideal in Section~\ref{sec:ramsey} was
\begin{align*}
\mathcal{I} = \langle \mathsf{x}_e^2-\mathsf{x}_e \,\, \forall  e\in E(K_6) \rangle & +  \langle \mathsf{x}_{ij}\mathsf{x}_{ik}\mathsf{x}_{jk} \,\,\forall i<j<k\in [6] \rangle\\
 & +  \langle (1-\mathsf{x}_{ij})(1-\mathsf{x}_{ik})(1-\mathsf{x}_{jk}) \,\, \forall i<j<k\in [6] \rangle.
\end{align*}

The following fact saves us from duplicating all of our arguments.

\begin{lemma}\label{bluered}
$\mathsf{p}((\mathsf{x}_{ij})_{1\leq i < j \leq 6}) \in \mathcal{I}$ if and only if $\mathsf{p}((1-\mathsf{x}_{ij})_{1\leq i < j \leq 6}) \in \mathcal{I}$.
\end{lemma}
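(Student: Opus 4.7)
The plan is to exhibit a ring automorphism of $\mathbb{R}[\mathsf{x}]$ that stabilizes $\mathcal{I}$. Let $\phi:\mathbb{R}[\mathsf{x}]\to\mathbb{R}[\mathsf{x}]$ be the ring homomorphism determined by $\phi(\mathsf{x}_{ij}) = 1-\mathsf{x}_{ij}$ for every $1 \le i < j \le 6$, and extended multiplicatively. Then $\phi$ is an involution ($\phi^{2} = \mathrm{id}$), and by construction $\phi(\mathsf{p})$ is precisely the polynomial $\mathsf{p}\bigl((1-\mathsf{x}_{ij})_{1\le i<j\le 6}\bigr)$. So once I know $\phi(\mathcal{I}) = \mathcal{I}$, the equivalence claimed in the lemma follows at once.

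Because $\phi$ is a ring homomorphism and ideals are closed under $\mathbb{R}[\mathsf{x}]$-linear combinations, the inclusion $\phi(\mathcal{I}) \subseteq \mathcal{I}$ reduces to checking that $\phi$ sends each of the three families of generators of $\mathcal{I}$ back into $\mathcal{I}$. I would verify this directly: first, $\phi(\mathsf{x}_{e}^{2}-\mathsf{x}_{e}) = (1-\mathsf{x}_{e})^{2}-(1-\mathsf{x}_{e}) = \mathsf{x}_{e}^{2}-\mathsf{x}_{e}$, so the hypercube generators are each fixed up to sign; second, $\phi(\mathsf{x}_{ij}\mathsf{x}_{ik}\mathsf{x}_{jk}) = (1-\mathsf{x}_{ij})(1-\mathsf{x}_{ik})(1-\mathsf{x}_{jk})$, which is precisely the blue-triangle generator of $\mathcal{I}$ associated with the same triple; symmetrically, $\phi$ sends each blue-triangle generator to the corresponding red-triangle generator. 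Hence $\phi(\mathcal{I}) \subseteq \mathcal{I}$, and the reverse inclusion comes free from $\phi^{2} = \mathrm{id}$. There is essentially no obstacle: the lemma is built into the symmetric ``red''/``blue'' pairing of the generators used to define $\mathcal{I}$.
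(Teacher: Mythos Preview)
Your proof is correct and is precisely the algebraic argument the paper alludes to; the paper's own proof is a one-line remark that the claim follows by exchanging the colors of the edges (equivalently, by the involution $\mathsf{x}_{ij}\mapsto 1-\mathsf{x}_{ij}$), and you have simply spelled this out. One trivial slip: $\phi(\mathsf{x}_e^2-\mathsf{x}_e)=\mathsf{x}_e^2-\mathsf{x}_e$ exactly, not merely up to sign, but this does not affect the argument.
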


\proof
This follows by exchanging the colors of the edges. This can also be observed algebraically by working with the ideal.
\qed

\medskip
We now note that claws are in the ideal, allowing us to relate certain degree two and degree one elements of the ideal.

\begin{lemma}\label{claw}
For $i\in [6]$, and $j<k<l \in [6]\backslash \{i\}$,
$$\mathsf{x}_{ij}\mathsf{x}_{ik}\mathsf{x}_{il} \in \mathcal{I} \textup{ and } (1-\mathsf{x}_{ij})(1-\mathsf{x}_{ik})(1-\mathsf{x}_{il}) \in \mathcal{I}.$$

Consequently,

$$1-\sum_{2\leq i \leq 6} \mathsf{x}_{1i} + \sum_{2\leq i <j \leq 6} \mathsf{x}_{1i}\mathsf{x}_{1j} \in \mathcal{I}.$$

\end{lemma}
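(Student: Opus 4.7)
\medskip

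\noindent\textbf{Proof proposal.} The plan is to handle the red claw first, derive the blue claw by color symmetry, and then obtain the consequence by expanding a fivefold product of blue factors.

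For the first claim, I would start from the blue triangle generator $(1-\mathsf{x}_{jk})(1-\mathsf{x}_{jl})(1-\mathsf{x}_{kl}) \in \mathcal{I}$ and multiply by $\mathsf{x}_{ij}\mathsf{x}_{ik}\mathsf{x}_{il}$ to get an element of $\mathcal{I}$. Expanding the factor $(1-\mathsf{x}_{jk})(1-\mathsf{x}_{jl})(1-\mathsf{x}_{kl})$, every summand except the constant $1$ contains at least one of $\mathsf{x}_{jk}, \mathsf{x}_{jl}, \mathsf{x}_{kl}$. When combined with the prefix $\mathsf{x}_{ij}\mathsf{x}_{ik}\mathsf{x}_{il}$, each such term contains one of the red-triangle generators $\mathsf{x}_{ij}\mathsf{x}_{ik}\mathsf{x}_{jk}$, $\mathsf{x}_{ij}\mathsf{x}_{il}\mathsf{x}_{jl}$, or $\mathsf{x}_{ik}\mathsf{x}_{il}\mathsf{x}_{kl}$, hence lies in $\mathcal{I}$. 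The only surviving term modulo $\mathcal{I}$ is $\mathsf{x}_{ij}\mathsf{x}_{ik}\mathsf{x}_{il}$ itself, and since the product lies in $\mathcal{I}$, so does $\mathsf{x}_{ij}\mathsf{x}_{ik}\mathsf{x}_{il}$. The blue claw $(1-\mathsf{x}_{ij})(1-\mathsf{x}_{ik})(1-\mathsf{x}_{il}) \in \mathcal{I}$ then follows by applying Lemma~\ref{bluered} to the polynomial $\mathsf{x}_{ij}\mathsf{x}_{ik}\mathsf{x}_{il}$.

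For the consequence, write $y_i := \mathsf{x}_{1i}$ and consider the product $\prod_{i=2}^{6}(1-y_i)$. Grouping three of the factors as $(1-y_2)(1-y_3)(1-y_4)$, which is a blue claw at vertex $1$ and hence in $\mathcal{I}$ by the already-established second assertion, we conclude $\prod_{i=2}^{6}(1-y_i) \in \mathcal{I}$. Expanding this product yields
\[
\prod_{i=2}^{6}(1-y_i) = \sum_{S \subseteq \{2,\dots,6\}} (-1)^{|S|} \prod_{i \in S} y_i,
\]
and every summand with $|S| \geq 3$ is a red claw (or contains one) at vertex $1$ by the first assertion, hence also in $\mathcal{I}$. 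Subtracting these, what remains is exactly the degree $\leq 2$ truncation
\[
1 - \sum_{2 \leq i \leq 6} y_i + \sum_{2 \leq i < j \leq 6} y_i y_j,
\]
which must therefore lie in $\mathcal{I}$, as claimed.

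The only place requiring any care is the expansion of the sixfold product $\mathsf{x}_{ij}\mathsf{x}_{ik}\mathsf{x}_{il}(1-\mathsf{x}_{jk})(1-\mathsf{x}_{jl})(1-\mathsf{x}_{kl})$ in the first step; once one checks that each of the seven non-constant summands of $(1-\mathsf{x}_{jk})(1-\mathsf{x}_{jl})(1-\mathsf{x}_{kl})$ absorbs two of the three prefix variables into a red-triangle generator, the rest of the argument is essentially bookkeeping. No new algebraic identities beyond the defining generators of $\mathcal{I}$ and the color-swap symmetry of Lemma~\ref{bluered} are needed.
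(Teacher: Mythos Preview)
Your proof is correct and follows essentially the same route as the paper. The only cosmetic difference is in the first step: the paper first expands $(1-\mathsf{x}_{jk})(1-\mathsf{x}_{jl})(1-\mathsf{x}_{kl})$ and removes the cubic term using the red triangle $\mathsf{x}_{jk}\mathsf{x}_{jl}\mathsf{x}_{kl}\in\mathcal{I}$ \emph{before} multiplying by $\mathsf{x}_{ij}\mathsf{x}_{ik}\mathsf{x}_{il}$, whereas you multiply first and then kill every nonconstant term at once via red triangles through vertex $i$; the two arguments are equivalent and the derivation of the consequence is identical.
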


\proof Indeed, combinatorially, if $\mathsf{x}_{ij}\mathsf{x}_{ik}\mathsf{x}_{il}$ were one for some $i,j,k,l$, then edges $\{i,j\}$, $\{i,k\}$, and $\{i,l\}$ would be colored red. Then if we color any of $\{j,k\}$, $\{k,l\}$ and $\{j,l\}$ red, we would have a red triangle spanned by that edge and vertex $i$, but if we color all of them blue, then we would have a blue $j,k,l$-triangle. We can observe the same thing symmetrically for blue edges.

Algebraically, observe that $\mathsf{x}_{jk}\mathsf{x}_{jl}\mathsf{x}_{kl} \in \mathcal{I}$ and $(1-\mathsf{x}_{jk})(1-\mathsf{x}_{jl})(1-\mathsf{x}_{kl}) \in \mathcal{I}$ implies that
\begin{align*}
1 & \equiv \mathsf{x}_{jk}+\mathsf{x}_{jl}+\mathsf{x}_{kl} - (\mathsf{x}_{jk}\mathsf{x}_{jl} + \mathsf{x}_{jk}\mathsf{x}_{kl}+\mathsf{x}_{jl}\mathsf{x}_{kl}) + \mathsf{x}_{jk}\mathsf{x}_{jl}\mathsf{x}_{kl}\\
& \equiv \mathsf{x}_{jk} + \mathsf{x}_{jl}+\mathsf{x}_{kl} - (\mathsf{x}_{jk}\mathsf{x}_{jl} + \mathsf{x}_{jk}\mathsf{x}_{kl}+\mathsf{x}_{jl}\mathsf{x}_{kl}) \textup{ mod } \mathcal{I}.
\end{align*}
Hence
\begin{align*}
\mathsf{x}_{ij}\mathsf{x}_{ik}\mathsf{x}_{il}(1)& \equiv \mathsf{x}_{ij}\mathsf{x}_{ik}\mathsf{x}_{il}\left(\mathsf{x}_{jk} + \mathsf{x}_{jl}+\mathsf{x}_{kl} - (\mathsf{x}_{jk}\mathsf{x}_{jl} + \mathsf{x}_{jk}\mathsf{x}_{kl}+\mathsf{x}_{jl}\mathsf{x}_{kl})\right)\\
& \equiv 0 \textup{ mod } \mathcal{I}
\end{align*}
since every monomial on the right hand side contains either $\mathsf{x}_{ij}\mathsf{x}_{ik}\mathsf{x}_{jk}$ or $\mathsf{x}_{ij}\mathsf{x}_{il}\mathsf{x}_{jl}$ or $\mathsf{x}_{ik}\mathsf{x}_{il}\mathsf{x}_{kl}$, each of which is in the ideal. Thus $\mathsf{x}_{ij}\mathsf{x}_{ik}\mathsf{x}_{il} \in \mathcal{I}$. By Lemma \ref{bluered}, $(1-\mathsf{x}_{ij})(1-\mathsf{x}_{ik})(1-\mathsf{x}_{il}) \in \mathcal{I}$ as well.

Since $(1-\mathsf{x}_{1i})(1-\mathsf{x}_{1j})(1-\mathsf{x}_{1l}) \in \mathcal{I}$ whenever $1<i<j<k \leq 6$, we have that
$$0\equiv \prod_{2\leq i \leq 6} (1-\mathsf{x}_{1i}) \textup{ mod } \mathcal{I}.$$

Expanding the right hand side and using the fact that $\mathsf{x}_{1i}\mathsf{x}_{1j}\mathsf{x}_{1k} \in \mathcal{I}$ whenever $1<i <j<k \leq 6$, we obtain
$$0 \equiv 1 - \sum_{2\leq i \leq 6} \mathsf{x}_{1i} + \sum_{2\leq i<j \leq 6} \mathsf{x}_{1i}\mathsf{x}_{1j} \textup{ mod } \mathcal{I}$$
since all of the higher order terms must vanish.
\qed

\medskip

\noindent{\em Proof of Lemma~\ref{lem:sos proof of Ramsey}}:
\begin{align*}
\left(2-\sum_{2\leq i  \leq 6} \mathsf{x}_{1i}\right)^2 & = 4 -4\sum_{2\leq i  \leq 6} \mathsf{x}_{1i}+\sum_{2\leq i \leq 6} \mathsf{x}_{1i}^2 +2\sum_{2\leq i<j\leq 6} \mathsf{x}_{1i}\mathsf{x}_{1j}\\
& \equiv 4 -4\sum_{2\leq i  \leq 6} \mathsf{x}_{1i}+\sum_{2\leq i \leq 6} \mathsf{x}_{1i} +2(-1 + \sum_{2\leq i \leq 6}x_{1i}) \quad \mod \mathcal{I} \\
&= 2-\sum_{2\leq i \leq 6} \mathsf{x}_{1i}.
\end{align*}

By Lemma \ref{bluered}, $\left(2-\sum_{2\leq i  \leq 6} (1-\mathsf{x}_{1i})\right)^2 \equiv 2-\sum_{2\leq i \leq 6} (1-\mathsf{x}_{1i})$.
Therefore,
\begin{align*}
\left(2-\sum_{2\leq i  \leq 6} \mathsf{x}_{1i}\right)^2 + \left(2-\sum_{2\leq i  \leq 6} (1-\mathsf{x}_{1i}) \right)^2 &\equiv 2 -\sum_{2\leq i \leq 6} \mathsf{x}_{1i} + 2 - \sum_{2\leq i  \leq 6} (1-\mathsf{x}_{1i}) \\
&= 4 - 5 = -1 \mod \mathcal{I}.
\end{align*}
\qed

For completeness, we explicitly wrote out how we used the ideal in the above sum of squares, putting the elements of the ideal in red. These calculations can be found online: http://www.math.washington.edu/$\sim$thomas/ramsey$\_$calculations.pdf.

\end{appendix}

\end{document}